\documentclass[11pt, twoside, leqno]{article}

\usepackage{amssymb}
\usepackage{amsmath}
\usepackage{amsthm}
\usepackage{color}
\usepackage{mathrsfs}

\allowdisplaybreaks	

\pagestyle{myheadings}\markboth{\footnotesize\rm\sc Dachun Yang and Junqiang Zhang}
{\footnotesize\rm\sc Riesz Transform Characterizations}

\textwidth=15cm
\textheight=21.152cm
\oddsidemargin 0.35cm
\evensidemargin 0.35cm

\parindent=13pt

\def\rr{{\mathbb R}}
\def\rn{{{\rr}^n}}
\def\zz{{\mathbb Z}}
\def\cc{{\mathbb C}}
\def\nn{{\mathbb N}}

\def\cs{{\mathcal S}}

\def\fz{\infty}
\def\az{\alpha}
\def\bz{\beta}

\def\gz{{\gamma}}
\def\bgz{{\Gamma}}

\def\lz{\lambda}
\def\blz{\Lambda}
\def\oz{{\omega}}

\def\tz{\theta}

\def\lf{\left}
\def\r{\right}

\def\hs{\hspace{0.25cm}}
\def\ls{\lesssim}

\def\ov{\overline}
\def\noz{\nonumber}
\def\wz{\widetilde}
\def\wh{\widehat}
\def\st{\subset}
\def\com{\complement}
\def\bh{\backslash}

\def\dist{\mathop\mathrm{\,dist\,}}
\def\supp{\mathop\mathrm{\,supp\,}}

\def\loc{\mathop\mathrm{\,loc\,}}
\def\div{\mathop\mathrm{div}}
\def\essinf{\mathop\mathrm{\,ess\,inf\,}}

\def\dint{\displaystyle\int}

\def\rnn{\rr_+^{n+1}}

\def\nab{\nabla}

\newtheorem{thm}{Theorem}[section]
\newtheorem{prop}[thm]{Proposition}
\newtheorem{lem}[thm]{Lemma}

\theoremstyle{definition}
\newtheorem{defn}[thm]{Definition}
\newtheorem{rem}[thm]{Remark}

\renewcommand{\vec}[1]{\boldsymbol{#1}}
\numberwithin{equation}{section}

\begin{document}

\arraycolsep=1pt

\title{\bf\Large Weighted $L^p$ Estimates of Kato Square Roots
Associated to Degenerate Elliptic Operators
\footnotetext{\hspace{-0.35cm} 2010 {\it
Mathematics Subject Classification}. Primary 47B06; Secondary 46E30, 35J70, 42B30, 42B35.
\endgraf {\it Key words and phrases}. Kato square root,
degenerate elliptic operator, Riesz transform,
Lebesgue space, Hardy space, square function, Muckenhoupt weight.
\endgraf Dachun Yang is supported by the National
Natural Science Foundation of China (Grant Nos. 11171027 and 11361020).  This project is also partially supported
by the Specialized Research Fund for the Doctoral Program of Higher Education
of China (Grant No. 20120003110003)  and the Fundamental Research Funds for Central
Universities of China (Grant Nos. 2013YB60 and 2014KJJCA10).}}
\author{Dachun Yang and Junqiang Zhang\,\footnote{Corresponding author}}
\date{ }
\maketitle

\vspace{-0.8cm}

\begin{center}
\begin{minipage}{13.8cm}
{\small {\bf Abstract}\quad
Let $w$ be a Muckenhoupt $A_2(\mathbb{R}^n)$ weight
and $L_w:=-w^{-1}\mathop\mathrm{div}(A\nabla)$
the degenerate elliptic operator on the Euclidean space $\mathbb{R}^n$, $n\geq 2$.
In this article, the authors
establish some weighted $L^p$ estimates of Kato square roots associated to
the degenerate elliptic operators $L_w$. More precisely, the authors
prove that, for $w\in A_{p}(\mathbb{R}^n)$,
$p\in(\frac{2n}{n+1},\,2]$ and any $f\in C^\infty_c(\mathbb{R}^n)$,
$\|L_w^{1/2}(f)\|_{L^p(w,\,\mathbb{R}^n)}
\sim \|\nabla f\|_{L^p(w,\,\mathbb{R}^n)}$,
where $C_c^\infty(\mathbb{R}^n)$ denotes the set of all infinitely differential functions with
compact supports.}
\end{minipage}
\end{center}

\section{Introduction}\label{s1}

\hskip\parindent
The Kato square root problem, which has a long history, was originally posed by Kato \cite{Ka61} in 1961.
It  amounts to identifying the domain of the square root of an abstract maximal
accretive operator as the domain of the corresponding sesquilinear form. Although it is known that
this problem has an affirmative answer in a few particular cases, in general, the Kato square root problem
does not hold true; see, for example, \cite{lion62,m72} for some counterexamples. However,
by noticing
that Kato posed his problem with the motivation from a special case of elliptic differential operators,
McIntosh \cite{m84,m82} refined the statement of the Kato square root problem in the setting of elliptic operators.
More precisely,  let $L:=-\div(A\nabla)$ be the second order elliptic operator on $\rn$, with $A$ being an
$n\times n$ matrix of complex bounded measurable
functions on $\rn$ satisfying the  elliptic condition. The refined formulation of the Kato square root problem by
McIntosh consists in showing that the domain of the square root $L^{1/2}$ coincides with the Sobolev space
$W^{1,2}(\rn)$ and
\begin{eqnarray}\label{eq in0}
\lf\|L^{1/2}(f)\r\|_{L^2(\rn)}\sim\|\nabla f\|_{L^2(\rn)}
\end{eqnarray}
with the equivalent positive constants independent of $f$.
This problem was completely solved by Auscher et al. \cite{ahlmt02,ahlt01,hlm02}
in the past decade, which consists one of the most celebrated results in harmonic analysis of recent years.
For a more complete history of this problem, we refer the reader to the above
papers or to the review by Kenig \cite{Ke02} and their references.

Observe that \eqref{eq in0} consists in comparing the $L^2$ norms of
$L^{1/2}(f)$ and $\nabla f$. For a general $p\in (1,\,\fz)$,
the $L^p$ theory of square roots has also attracted considerable attention
(see \cite{Au07,HM03} and the references cited therein). In particular,
Auscher \cite{Au07} showed that, for any $f\in C_c^\fz(\rn)$,
\begin{eqnarray}\label{eq in1}
\lf\|L^{1/2}(f)\r\|_{L^p(\rn)}\sim \|\nabla f\|_{L^p(\rn)},\ \ \
p\in\lf(p_-(L),\,2+\varepsilon(L)\r),
\end{eqnarray}
here and hereafter, the equivalent positive constants in \eqref{eq in1} are independent of $f$,
$C_c^\fz(\rn)$ denotes the set of all infinitely
differential functions with compact supports,
$p_-(L):=\inf\{p\in[1,\,\fz]:\ \nabla L^{-1/2}:\ L^p(\rn)\to L^p(\rn)\}\in[1,\,\frac{2n}{n+2})$
and $\varepsilon(L)$
is a positive constant depending on $L$. Moreover,
Hofmann et al. \cite{HMM11} generalized the aforementioned result to the range
$p\in (\frac{p_-(L)n}{n+p_-(L)},\,2+\varepsilon(L))$,
by establishing the Riesz transform
characterizations of the Hardy spaces $H_L^p(\rn)$
associated to the second order elliptic operator $L=-\div(A\nabla)$, namely,
for all $f\in H_L^p(\rn)$,
\begin{eqnarray}\label{eq in2}
\|f\|_{H_L^p(\rn)}\sim \lf\|\nabla L^{-1/2}(f)\r\|_{H^p(\rn)},\ \ \
p\in\lf(\frac{p_-(L)n}{n+p_-(L)},\,2+\varepsilon(L)\r),
\end{eqnarray}
where $H^p(\rn)$ denotes the classical Hardy space
and the equivalent positive constants in \eqref{eq in2} are independent of $f$.
Noticing that, for all $p\in(p_-(L),\,2+\varepsilon(L))$,
both $H^p(\rn)$ and $H^p_L(\rn)$ coincide with the Lebesgue spaces $L^p(\rn)$
(see \cite[Proposition 9.1(v)]{HMM11}),
thus \eqref{eq in2} covers \eqref{eq in1}.

In the present article, we consider the $L^p$ theory of square roots
in the case of degenerate elliptic operators.
To be precise, let $w\in A_2(\mathbb{R}^n)$ be a Muckenhoupt weight.
A matrix $A(x):=(A_{ij}(x))_{i,j=1}^n$ of complex-valued, measurable
functions on $\rn$ is said to satisfy the \emph{degenerate elliptic condition}
if there exist positive constants $\lz\le\blz$ such that,
for almost every $x\in\rn$ and all $\xi$, $\eta\in\cc^n$,
\begin{eqnarray}\label{degenerate C1}
\lf|\langle A(x) \xi,\,\eta \rangle\r| \le \blz w(x)|\xi||\eta|
\end{eqnarray}
and
\begin{eqnarray}\label{degenerate C2}
\Re \langle A(x) \xi,\,\xi \rangle \geq \lz w(x)|\xi|^2,
\end{eqnarray}
where $\Re z$ denotes the \emph{real part} of $z$ for any $z\in\cc$.
For such a matrix $A(x)$,
the associated \emph{degenerate elliptic operator} $L_w$ is defined by setting,
for all $f\in D(L_w)\st\mathcal{H}_0^1(w,\,\rn)$,
\begin{eqnarray}\label{Lw}
L_wf:=-\frac{1}{w}\div (A\nabla f),
\end{eqnarray}
which is interpreted in the usual weak sense via the sesquilinear form,
where $D(L_w)$ denotes the domain of $L_w$.
Here and hereafter, $\mathcal{H}_0^1(w,\,\rn)$ denotes the \emph{weighted Sobolev space} which is
defined to be the closure of $C_c^\fz(\rn)$ with respect to the \emph{norm}
$$\|f\|_{\mathcal{H}_0^1(w,\,\rn)}:=\lf\{\int_\rn \lf[|f(x)|^2+|\nabla f(x)|^2\r]w(x)\,dx\r\}^{1/2}.$$
The \emph{sesquilinear form} $\mathfrak{a}$ associated with $L_w$ is defined by setting,
for all $f$, $g\in\mathcal{H}_0^1(w,\,\rn)$,
\begin{eqnarray}\label{eq sesqui form}
\mathfrak{a}(f,\,g):=\dint_{\rn}[A(x)\nabla f(x)]\cdot \overline{\nabla g(x)}\,dx.
\end{eqnarray}
Operators of the form \eqref{Lw} and the associated elliptic equations
were first studied by Fabes et al. \cite{FKS82}
and have also been considered by a number of other authors (see, for example,
\cite{CF84,CS85,CF87} and, especially, some recent articles by Cruz-Uribe et al.
\cite{CR08,CR12,CR13,CR14}).

Observe that the accretive condition \eqref{degenerate C2} enables one to define
the square root $L^{1/2}_w$ (see \cite{Ka61,Ka95}).
It is a natural question to consider the associated Kato square root problem
in the case of the degenerate elliptic operator $L_w$. In particular,
Cruz-Uribe et al. \cite{CR13} proved that, for any $f\in H_0^1(w,\,\rn)$,
\begin{eqnarray*}
\lf\|L_w^{1/2}(f)\r\|_{L^2(w,\,\rn)}\sim \|\nabla f\|_{L^2(w,\,\rn)},
\end{eqnarray*}
where $L^2(w,\,\rn)$ denotes the \emph{weighted Lebesgue space} with the norm
\begin{eqnarray*}
\|f\|_{L^2(w,\,\rn)}:=\lf[\int_\rn |f(x)|^2w(x)\,dx\r]^{\frac{1}{2}}.
\end{eqnarray*}
This result solves the Kato square root problem associated to the operator $L_w$.
Notice that, when $w\equiv 1$, $L_w$ is just the second elliptic operator $L$,
thus, the results in \cite{CR13} may be seen as generalizations of those in \cite{ahlmt02}.

Motivated by the aforementioned results in \cite{CR13,ahlmt02,Au07,HMM11},
our aim of this article is to study the weighted $L^p$ estimates of Kato
square roots associated to $L_w$.
To be precise, let $w\in A_\fz(\rn)$ be a Muckenhoupt weight
(see \eqref{eq A-p}, \eqref{eq A-1} and \eqref{eq A-fz} below
for the precise definitions of $A_p(\rn)$ of Muckenhoupt weights with $p\in[1,\,\fz]$).
For any measurable set $E$ of $\rn$ and $p\in(0,\,\fz)$,
$L^p(w,\,E)$ denotes the \emph{weighted Lebesgue space} with the \emph{(quasi-)norm}
\begin{eqnarray*}
\|f\|_{L^p(w,\,E)}:=\lf\{\dint_E\lf|f(x)\r|^pw(x)\,dx\r\}^{\frac1p}.
\end{eqnarray*}
Let $L_w$ be a degenerate elliptic operator
as in \eqref{Lw} with $w\in A_2(\rn)$.
The following theorem is the main result of
the present article, which is proved in Section \ref{s6}.
\begin{thm}\label{cor main}
Let $p\in(\frac{2n}{n+1},\,2]$ and $w\in A_p(\rn)$.
Then there exists a positive constant $C$ such that,
for any $f\in C_c^\fz(\rn)$,
\begin{eqnarray*}
C^{-1}\|\nabla f\|_{L^p(w,\,\rn)}\le \lf\|L_w^{1/2}(f)\r\|_{L^p(w,\,\rn)}
\le C\|\nabla f\|_{L^p(w,\,\rn)}.
\end{eqnarray*}
\end{thm}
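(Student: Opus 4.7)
The plan is to adapt the strategy of Hofmann--Mayboroda--McIntosh \cite{HMM11}, which proved \eqref{eq in1} in the unweighted case through the Riesz transform characterization \eqref{eq in2} of Hardy spaces adapted to $L$. Concretely, I would introduce a scale of weighted Hardy spaces $H_{L_w}^p(w,\rn)$ associated to $L_w$, defined via the conical square function
\begin{equation*}
\|f\|_{H_{L_w}^p(w,\rn)}:=\lf\|\lf(\dint_{\Gamma(\cdot)}\lf|t^2L_we^{-t^2L_w}(f)(y)\r|^2\dydt\r)^{1/2}\r\|_{L^p(w,\rn)},
\end{equation*}
and prove a weighted analogue of \eqref{eq in2}:
\begin{equation*}
\|f\|_{H_{L_w}^p(w,\rn)}\sim \lf\|\nab L_w^{-1/2}(f)\r\|_{H^p(w,\rn)}
\end{equation*}
on a suitable range of $p$, where $H^p(w,\rn)$ is the classical weighted Hardy space. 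Applying this with $f$ replaced by $L_w^{1/2}(f)$ and identifying both Hardy spaces with $L^p(w,\rn)$ in the range $p\in(\frac{2n}{n+1},2]$, $w\in A_p(\rn)$, then yields the conclusion.

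The first technical step is to develop weighted $L^2(w)$-off-diagonal (Gaffney-type) estimates for the semigroup $e^{-tL_w}$ and its gradient $\sqrt t\,\nab e^{-tL_w}$, which replace the standard off-diagonal bounds in \cite{ahlmt02,HMM11}. From these and a weighted tent-space / molecular decomposition of $H_{L_w}^p(w,\rn)$, the Riesz transform characterization reduces to showing that $\nab L_w^{-1/2}$ sends $L_w$-molecules of type $\size$ into classical $H^p(w,\rn)$-molecules, modulo harmless tail estimates; here the solution to the $L^2(w)$ Kato problem in \cite{CR13} supplies the boundedness of $\nab L_w^{-1/2}$ on $L^2(w,\rn)$, and interpolation with the semigroup bounds produces the full range. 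The second step is the identifications $H_{L_w}^p(w,\rn)=L^p(w,\rn)=H^p(w,\rn)$ for $p$ in the stated range: the right-hand equality is classical (Str\"omberg--Torchinsky), while the left-hand one follows from the $L^p(w)$-boundedness of the square function $S_{L_w}$ and its reverse inequality, obtained by a weighted good-$\lz$ argument built on the off-diagonal estimates above.

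The main obstacle, I expect, is controlling the lower endpoint $p=\frac{2n}{n+1}$. In the non-degenerate case the analogous threshold $\frac{p_-(L)n}{n+p_-(L)}$ in \eqref{eq in2} is produced by a weighted Sobolev/Poincar\'e embedding combined with the $L^2$-$L^{p_-(L)}$ range of $\nab L^{-1/2}$; in the present setting, one has $p_-(L_w)=1$ as a consequence of the Riesz-potential type bound $\nab L_w^{-1/2}\!\!:\,L^2(w,\rn)\to L^2(w,\rn)$ together with weighted Sobolev embedding $\mathcal H_0^1(w,\rn)\hookrightarrow L^{2n/(n-1)}(w,\rn)$, which limits the improvement one can extract. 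Carefully tracking the weight in the molecular estimates, in the tent-space atomic decomposition, and in the weighted reverse-H\"older inequalities for $w\in A_p(\rn)$ will be the delicate part; the threshold $\frac{2n}{n+1}$ will emerge precisely as the exponent below which the molecular tails cease to be summable against the $A_p$ weight.
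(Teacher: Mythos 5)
Your high-level plan (build Hardy spaces $H_{L_w}^p$ via a conical square function, prove a Riesz-transform characterization in the spirit of \eqref{eq in2}, identify $H_{L_w}^p$ with $L^p(w,\rn)$, then substitute $f\mapsto L_w^{1/2}(f)$) is indeed the skeleton the paper follows, and for $p\in(1,2]$ with $w\in A_p(\rn)$ routing through the classical weighted Hardy space $H^p(w,\rn)$ is harmless since it coincides with $L^p(w,\rn)$. Two caveats before the main point: the inner normalization in the conical square function should use the weighted measure $w(y)\,dy/[w(B(x,t))t]$, not $dy\,dt/t^{n+1}$, otherwise $Q_{\psi,L_w}$ is not bounded from $L^2(w,\rn)$ into the weighted tent space and the whole tent-space machinery breaks; and the threshold $\frac{2n}{n+1}$ is not traceable to a value $p_-(L_w)=1$, but rather to the fact that the weighted $L^p$--$L^q$ off-diagonal estimates for $e^{-tL_w}$ (and hence the square-function bounds and Poincar\'e/Sobolev inequalities that drive everything) are only available in the range $\frac{2n}{n+1}\le p\le q\le\frac{2n}{n-1}$, a constraint coming from the local weighted Sobolev embedding of Fabes--Kenig--Serapioni.

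The genuine gap is that your argument only covers one direction of the Riesz-transform characterization. A molecular estimate of the form ``$\nabla L_w^{-1/2}$ maps $L_w$-molecules to classical $H^p(w,\rn)$-molecules'' yields $\|\nabla L_w^{-1/2}(f)\|_{L^p(w,\rn)}\ls\|f\|_{H_{L_w}^p(\rn)}$, and after the substitution $f\mapsto L_w^{1/2}(g)$ this gives only the inequality $\|\nabla g\|_{L^p(w,\rn)}\ls\|L_w^{1/2}(g)\|_{L^p(w,\rn)}$. The converse estimate $\|L_w^{1/2}(g)\|_{L^p(w,\rn)}\ls\|\nabla g\|_{L^p(w,\rn)}$ (equivalently $\|f\|_{H_{L_w}^p(\rn)}\ls\|\nabla L_w^{-1/2}(f)\|_{L^p(w,\rn)}$) is not a consequence of a molecule-to-molecule mapping, nor of the ``good-$\lz$ reverse square-function inequality'' you invoke, which only identifies $H_{L_w}^p(\rn)$ with $L^p(w,\rn)$ and says nothing about the Riesz transform. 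The paper proves this direction by an entirely separate mechanism: for $p\in[2,\frac{2n}{n-1})$ it dualizes against the $L^{p'}(w,\rn)$-boundedness of $\nabla(L_w^\ast)^{-1/2}$ (which in turn requires a weak-type $(\frac{2n}{n+1},\frac{2n}{n+1})$ bound for $\nabla L_w^{-1/2}$ proved through a weighted Calder\'on--Zygmund decomposition in the spirit of Hofmann--Martell), while for $p\in(\frac{2n}{n+1},2)$ it runs a Calder\'on--Zygmund decomposition of the weighted Sobolev space $\dot W^{1,p}(w,\rn)$ (Auscher--Coulhon style), together with the local weighted Poincar\'e and Sobolev inequalities of Fabes--Kenig--Serapioni, and then interpolates. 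Without some version of this second pillar your plan does not produce the full equivalence in the theorem. Also worth noting: your proposal implicitly needs $w\in A_2(\rn)$ (to define $L_w$), and for the $p<2$ leg the paper additionally needs $w\in A_p(\rn)$ in the Poincar\'e/Sobolev inequalities; make sure the weight hypotheses are tracked consistently, since $A_p\not\subset A_2$ when $p>2$.
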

This result establishes the weighted $L^p$ estimates of Kato square roots associated to
the degenerate elliptic operators $L_w$ for $p\in(\frac{2n}{n+1},\,2]$.
In particular, when $p=2$, Theorem \ref{cor main},
together with a density argument, leads to the corresponding result in \cite{CR13}.

To prove Theorem \ref{cor main}, we use the strategy of
establishing the Riesz transform characterizations of the Hardy spaces associated
to $L_w$, which is accomplished by Propositions \ref{thm equi}, \ref{thm main1} and \ref{thm main} below.
We point out that this idea is
inspired by Hofmann et al. \cite{HMM11}.
Now we introduce some related definitions and notation on the Hardy spaces
associated to the degenerate elliptic operator $L_w$.
In what follows, let $\rr_+^{n+1}:=\rr^n\times(0,\fz)$.
Let $w\in A_2(\rn)$ and $L_w$ be as in \eqref{Lw},
for any $f\in L^2(w,\,\rn)$ and $x\in\rn$, the \emph{square function $\cs_{L_w}(f)$
associated with} $L_w$ is defined by setting
\begin{eqnarray*}
\cs_{L_w}(f)(x):=\lf[\iint_{\bgz(x)}\lf|t^2L_we^{-t^2L_w}(f)(y)\r|^2w(y)\,
\frac{dy}{w(B(x,t))}\,\frac{dt}{t}\r]^{1/2},
\end{eqnarray*}
where $B(x,t):=\{y\in\rn:\ |x-y|<t\}$,
$w(B(x,t)):=\int_{B(x,t)}w(y)\,dy$
and
\begin{eqnarray}\label{cone}
\bgz_\az(x):=\{(y,t)\in\rr^{n+1}_+:\ |x-y|<\az t\}
\end{eqnarray}
denotes the \emph{cone of aperture} $\az$ \emph{with
vertex} $x$. In particular, if $\az=1$, we write $\Gamma(x)$
instead of $\Gamma_\az(x)$.

For any $p\in(0,\,\fz)$,
the Hardy space $H_{L_w}^p(\rn)$ associated to $L_w$ is defined as follows.
\begin{defn}\label{def Hardy space}
Let $w\in A_2(\rn)$
and $L_w$ be the degenerate elliptic operator
as in \eqref{Lw} with the matrix $A$ satisfying the degenerate elliptic
conditions \eqref{degenerate C1} and \eqref{degenerate C2}.
For any $p\in(0,\,2]$,
the \emph{Hardy space} $H_{L_w}^p(\rn)$, \emph{associated to}
$L_w$, is defined as the completion of the space
\begin{eqnarray*}
\lf\{f\in L^2(w,\,\rn):\ \|S_{L_w}(f)\|_{L^p(w,\,\rn)}<\fz\r\}
\end{eqnarray*}
with respect to the \emph{(quasi-)norm}
\begin{eqnarray*}
\lf\|f\r\|_{H_{L_w}^p(\rn)}:=\|S_{L_w}(f)\|_{L^p(w,\,\rn)}.
\end{eqnarray*}
For any $p\in(2,\,\fz)$, define
\begin{eqnarray*}
H_{L_w}^p(\rn):=\lf(H_{L_w^\ast}^{p'}(\rn)\r)^\ast,
\end{eqnarray*}
here and hereafter, $1/p+1/p'=1$ and $L_w^\ast$ is the adjoint operator of $L_w$
in $L^2(w,\,\rn)$.
\end{defn}

We point out that the study of the Hardy spaces associated to different operators
(for example, the non-negative self-adjoint operator, the second order elliptic operator
$-\div (A\nabla)$ and the Schr\"odinger operator $-\Delta +V$) has
attracted considerable attention and the real-variable theory of these spaces
has been established in recent years (see, for example, \cite{ADM05,DY05,DY051,AMR08,Yan08,JY10,HM09,
HMM11,dl13,dhmmy13,ccyy13}).

Moreover, we need to introduce the following Hardy space $H_{L_w,\,{\rm Riesz}}^p(\rn)$
associated to the Riesz transform $\nabla L_w^{-1/2}$,
which, when $w\equiv1$, is a special case of that defined in \cite[p.\,728]{HMM11}.

\begin{defn}\label{def Riesz Hardy}
Let $p\in(1,\,\fz)$, $w\in A_2(\rn)$
and $L_w$ be the degenerate elliptic operator
as in \eqref{Lw} with the matrix $A$ satisfying the degenerate elliptic
conditions \eqref{degenerate C1} and \eqref{degenerate C2}.
The \emph{Hardy space} $H_{L_w,\,{\rm Riesz}}^p(\rn)$ is defined as the completion of the space
\begin{eqnarray*}
\lf\{f\in L^2(w,\,\rn):\ \nabla L_w^{-1/2}(f)\in L^p(w,\,\rn)\r\}
\end{eqnarray*}
with respect to the \emph{norm}
\begin{eqnarray*}
\lf\|f\r\|_{H_{L_w,\,{\rm Riesz}}^p(\rn)}:=\lf\|\nabla L_w^{-1/2}(f)\r\|_{L^p(w,\,\rn)}.
\end{eqnarray*}
\end{defn}

\begin{rem}\label{rem 2}
Comparing with Definition \ref{def Riesz Hardy}, recall that, 
when $p\in(0,\,1]$ and $w\in A_2(\rn)$, the Hardy space $H_{L_w,\,{\rm Riesz}}^p(\rn)$
was introduced in \cite[Definition 1.2]{yz15},
which is defined as the completion of the space
\begin{eqnarray*}
\lf\{f\in L^2(w,\,\rn):\ \nabla L_w^{-1/2}(f)\in H_w^p(\rn)\r\}
\end{eqnarray*}
with respect to the \emph{quasi-norm}
\begin{eqnarray*}
\lf\|f\r\|_{H_{L_w,\,{\rm Riesz}}^p(\rn)}:=\lf\|\nabla L_w^{-1/2}(f)\r\|_{H_w^p(\rn)},
\end{eqnarray*}
where $H_w^p(\rn)$ denotes the classical weighted Hardy space.
Moreover, in \cite{yz15}, the Hardy spaces $H_{L_w,\,{\rm Riesz}}^p(\rn)$ and $H^p_{L_w}(\rn)$
were proved to coincide when $p\in(\delta,\,1]$,
where $\delta\in(0,\,1)$ is some fixed constant.
\end{rem}

To prove Theorem \ref{cor main},
we first prove the following three propositions.

\begin{prop}\label{thm equi}
Let $w\in A_2(\rn)$. Then, for any given $p\in(\frac{2n}{n+1},\,\frac{2n}{n-1})$,
$H_{L_w}^p(\rn)$ and $L^p(w,\,\rn)$ coincide with equivalent norms.
\end{prop}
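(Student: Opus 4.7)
The plan is to split the argument into three subranges, $p=2$, $p\in(\frac{2n}{n+1},2)$, and $p\in(2,\frac{2n}{n-1})$, and tackle them in that order.

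For $p=2$, the accretivity condition \eqref{degenerate C2} together with the sesquilinear form \eqref{eq sesqui form} makes $L_w$ maximal-accretive on $L^2(w,\rn)$, so by McIntosh's theorem it admits a bounded holomorphic functional calculus and the quadratic estimate
\begin{equation*}
\int_0^\fz \lf\|(tL_w)e^{-tL_w}f\r\|_{L^2(w,\rn)}^2 \,\frac{dt}{t}\sim \|f\|_{L^2(w,\rn)}^2
\end{equation*}
holds. Substituting $t^2$ for $t$ and exchanging integrals using the Fubini theorem together with the doubling of $w$ then yields $\|\cs_{L_w}(f)\|_{L^2(w,\rn)}\sim \|f\|_{L^2(w,\rn)}$, which is the case $p=2$ of the proposition.

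For $p\in(\frac{2n}{n+1},2)$, I would prove the two inclusions separately. For $L^p(w,\rn)\subset H_{L_w}^p(\rn)$, i.e.\ $\|\cs_{L_w}(f)\|_{L^p(w,\rn)}\lesssim \|f\|_{L^p(w,\rn)}$, I would combine the $L^2(w,\rn)$-bound just established with the weighted Davies--Gaffney off-diagonal estimates for $\{e^{-tL_w}\}_{t>0}$ and $\{tL_we^{-tL_w}\}_{t>0}$ (available for degenerate operators from \cite{CR13}), and run an Auscher--Martell-type weighted good-$\lambda$ / extrapolation argument that descends from exponent $2$ to any $p$ strictly above $\frac{2n}{n+1}$. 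For the reverse inclusion $H_{L_w}^p(\rn)\subset L^p(w,\rn)$, the idea is a molecular decomposition of $H_{L_w}^p(\rn)$ into $(p,2,M,\ez)_{L_w}$-type molecules (in the spirit of the $H^p_L$-theory of Hofmann--Mayboroda--McIntosh), together with the uniform bound $\|\mathfrak{m}\|_{L^p(w,\rn)}\lesssim 1$ for each such molecule $\mathfrak{m}$; this is precisely where the threshold $\frac{2n}{n+1}$ enters, via the weighted Sobolev--Poincar\'e inequality available for $w\in A_2(\rn)$.

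The case $p\in(2,\frac{2n}{n-1})$ is obtained by duality. Since $\frac{2n}{n+1}$ and $\frac{2n}{n-1}$ are Sobolev conjugates, $p'\in(\frac{2n}{n+1},2)$; the adjoint matrix $A^*$ satisfies the same conditions \eqref{degenerate C1}--\eqref{degenerate C2}, so $L_w^*$ is a degenerate elliptic operator of the same type as $L_w$, and the previous step applied to $L_w^*$ gives $H_{L_w^*}^{p'}(\rn)=L^{p'}(w,\rn)$ with equivalent norms. Definition \ref{def Hardy space} then yields $H_{L_w}^p(\rn)=(H_{L_w^*}^{p'}(\rn))^\ast=(L^{p'}(w,\rn))^\ast = L^p(w,\rn)$ with equivalent norms.

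The main obstacle will be the lower half of the intermediate range, $p\in(\frac{2n}{n+1},2)$: tracking the molecular decay exponent $\ez$, the approximation parameter $M$, and the integrability gained from the weighted Sobolev--Poincar\'e inequality so that they combine to reach exactly the sharp endpoint $\frac{2n}{n+1}$. The difficulty is that the semigroup of $L_w$ only enjoys weighted Gaffney-type (not pointwise Gaussian) bounds, and the weight $w$ is only assumed to lie in $A_2(\rn)$, so the dimensional scaling is forced to come from the intrinsic $n$-dimensional doubling of $w$ rather than from any stronger Gaussian control. The inclusion $H_{L_w}^p(\rn)\subset L^p(w,\rn)$ near the endpoint $\frac{2n}{n+1}$ is where this bookkeeping is tightest.
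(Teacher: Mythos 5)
Your overall plan (prove the two containments, then use duality to cover the upper half of the interval) is reasonable, and the piece you have right is the inclusion $L^p(w,\rn)\subset H_{L_w}^p(\rn)$: as you say, the $L^p(w,\rn)$-boundedness of the square function follows from weighted off-diagonal estimates for the semigroup plus an Auscher--Martell-type argument. This is essentially what the paper does, citing Proposition \ref{pro ODEB} (the weighted $L^p$-$L^q$ off-diagonal estimates on balls from \cite{ZCJY14}, not merely $L^2$-Davies--Gaffney) together with a theorem from \cite{BCKYY13-1} to get \eqref{eq Spb}; the off-diagonal-on-balls improvement is in fact where the weighted Sobolev embedding, and hence the threshold $\frac{2n}{n+1}$, enters. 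So your attribution of the threshold to Sobolev--Poincar\'e is right in spirit, but the proximate tool is the $L^p$-$L^q$ off-diagonal estimates on balls.

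The gap is in your treatment of the reverse containment $H_{L_w}^p(\rn)\subset L^p(w,\rn)$ for $p\in(\frac{2n}{n+1},\,2)$. You propose a molecular decomposition into $(p,2,M,\varepsilon)$-type molecules, but since $n\geq 2$ forces $\frac{2n}{n+1}>1$, every $p$ in this subrange satisfies $p>1$, and the HMM-style atomic/molecular decomposition is a tool for $p\le 1$ (and in \cite{HMM11} the identification $H_L^p=L^p$ for $1<p<2$ is obtained by a different route, not by molecules). So the molecular argument does not apply as stated, and the "bookkeeping of $\varepsilon$, $M$" you flag as the main difficulty is in fact bookkeeping for a technique that is not available in this exponent range. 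The paper avoids this entirely: for every $p\in(\frac{2n}{n+1},\,\frac{2n}{n-1})$ it shows $\|f\|_{L^p(w,\rn)}\lesssim\|f\|_{H_{L_w}^p(\rn)}$ by a duality argument in tent spaces. Using the Calder\'on reproducing formula $f=\pi_{\psi,L_w}\circ Q_{\wz\psi,L_w}(f)$ from Lemma \ref{lem c-z} together with \eqref{eq 2-0}, the pairing $\int_\rn f\ov{g}\,w\,dx$ is rewritten as a $T^p(w,\rn)$-$T^{p'}(w,\rn)$ pairing $\langle Q_{\wz\psi,L_w}(f),\,Q_{\wz\psi,L_w^\ast}(g)\rangle$, and then $\|Q_{\wz\psi,L_w^\ast}(g)\|_{T^{p'}(w,\rn)}\lesssim\|S_{L_w^\ast,k}(g)\|_{L^{p'}(w,\rn)}\lesssim\|g\|_{L^{p'}(w,\rn)}$ by \eqref{eq Spb} applied to the adjoint operator $L_w^\ast$ (also degenerate elliptic, and $p'$ again lies in the open interval). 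This single argument gives both endpoints of the interval symmetrically, so no case split $p=2$, $p<2$, $p>2$ is needed. If you want to salvage your plan, replace the molecular step with exactly this tent-space duality step; everything else in your sketch is consistent with the paper.
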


Proposition \ref{thm equi} is proved in Section \ref{s3.x}.
In particular, when $w\equiv 1$, $L_w$ is just
the usual second order elliptic operator $L=-\div (A\nabla)$ studied in
\cite{HMM11}, where Hofmann et al. proved that, for any $p\in(p_{-}(L),\,p_{+}(L))$,
$H_{L}^p(\rn)$ coincides with the Lebesgue space $L^p(\rn)$ with equivalent norms.
Notice that $1\le p_{-}(L)<\frac{2n}{n+1}<\frac{2n}{n-1}<p_{+}(L)\le\fz$
(see \cite[p.\,4]{HMM11}). Recall that, via the local weighted Sobolev embedding inequality proved in \cite{FKS82}
(see also Lemma \ref{lem imbedding} below),
it was proved in \cite[Proposition 1.5]{ZCJY14} that, for any $\frac{2n}{n+1}\le p\le q\le\frac{2n}{n-1}$,
the semigroup $\{e^{-tL_w}\}_{t\geq 0}$ satisfies the
weighted $L^p-L^q$ off-diagonal estimates on balls
(see also Proposition \ref{pro ODEB} below). This is a main tool used in
the proof of Proposition \ref{thm equi}, which restricts
the range of $p$ in Proposition \ref{thm equi} to the narrower interval
$(\frac{2n}{n+1},\,\frac{2n}{n-1})$ instead of $(p_{-}(L),\,p_{+}(L))$.
It is still unclear whether Proposition \ref{thm equi} still holds true or not
for a wider range of $p$ than $(\frac{2n}{n+1},\,\frac{2n}{n-1})$.

\begin{prop}\label{thm main1}
{\rm (i)} Let $w\in A_2(\rn)$ and $p\in(\frac{2n}{n+1},\,2]$. Then there exists a
positive constant $C$ such that, for any $f\in H_{L_w}^p(\rn)$,
\begin{eqnarray*}
\lf\|\nabla L_w^{-1/2}(f)\r\|_{L^p(w,\,\rn)}\le C\|f\|_{H_{L_w}^p(\rn)}.
\end{eqnarray*}

{\rm (ii)} Let $w\in A_q(\rn)$ with $q\in [1,\,1+\frac{1}{n})$ and $p\in [1,\,\frac{2n}{n+1}]$.
Then there exists a positive constant
$C$ such that, for any $f\in H_{L_w}^p(\rn)$,
\begin{eqnarray*}
\lf\|\nabla L_w^{-1/2}(f)\r\|_{L^p(w,\,\rn)}\le C\|f\|_{H_{L_w}^p(\rn)}.
\end{eqnarray*}
\end{prop}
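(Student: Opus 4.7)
The plan is to combine the $L^2(w,\rn)$ Kato square root estimate of Cruz-Uribe et al.\ \cite{CR13} with weighted off-diagonal estimates of the semigroup $\{e^{-tL_w}\}_{t\ge 0}$ and the gradient semigroup $\{\sqrt{t}\nabla e^{-tL_w}\}_{t\ge 0}$ on balls, together with a molecular decomposition of $H_{L_w}^p(\rn)$. For Part (i), with $p\in(\frac{2n}{n+1},\,2]$, Proposition \ref{thm equi} identifies $H_{L_w}^p(\rn)$ with $L^p(w,\rn)$ (since $2<\frac{2n}{n-1}$), so the target inequality reduces to proving the $L^p(w,\rn)$-boundedness of $T:=\nabla L_w^{-1/2}$. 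At $p=2$ this is a reformulation of the Kato square root bound of \cite{CR13}. For $p\in(\frac{2n}{n+1},\,2)$ I would invoke a weighted version of the Blunck-Kunstmann/Auscher $L^p$-extrapolation criterion: since $T$ is already $L^2(w,\rn)$-bounded, it suffices to verify weighted $L^p(w)$--$L^2(w)$ off-diagonal estimates on balls for $T(I-e^{-r^2L_w})^M$ and $L^2(w)$--$L^2(w)$ off-diagonal estimates for $Te^{-r^2L_w}$. Both families are deduced from the off-diagonal bounds of Proposition \ref{pro ODEB} combined with the weighted Sobolev-Poincar\'e embedding of Fabes-Kenig-Serapioni (Lemma \ref{lem imbedding}).

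For Part (ii), with $p\in[1,\,\frac{2n}{n+1}]$, the identification $H_{L_w}^p(\rn)=L^p(w,\rn)$ is no longer available, and I proceed via a tent-space molecular decomposition: any $f\in H_{L_w}^p(\rn)\cap L^2(w,\rn)$ can be written $f=\sum_j\lz_j m_j$, where each $m_j=L_w^M b_j$ is a $\size$-molecule concentrated on a ball $B_j$, with $\sum_j|\lz_j|^p\ls\|f\|_{H_{L_w}^p(\rn)}^p$. Using the identity
\begin{eqnarray*}
\nabla L_w^{-1/2}(m_j)=\frac{2}{\sqrt{\pi}}\int_0^\fz\nabla e^{-t^2L_w}L_w^{M}b_j\,dt,
\end{eqnarray*}
I split the $t$-integral at $t\sim r_{B_j}$ and decompose $\rn$ into the annuli $U_k(B_j):=2^{k+1}B_j\setminus 2^kB_j$. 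On each piece the $L^p(w,\,U_k(B_j))$-norm of the integrand is bounded using the weighted $L^2(w)$--$L^p(w)$ off-diagonal estimates for $\sqrt{t}\nabla e^{-tL_w}$ on balls, together with the molecular size estimates on $b_j$; the geometric decay in $k$ comes from the off-diagonal factor, and the rewrite $\nabla e^{-t^2L_w}L_w^M=t^{-2M}\nabla e^{-t^2L_w}(t^2L_w)^M$ supplies the $(r_{B_j}/t)^{2M}$ polynomial decay needed to make the $dt$-integral converge near $\fz$. Summation over $j$ uses $p\le 2$ together with $\sum_j|\lz_j|^p\ls\|f\|_{H_{L_w}^p(\rn)}^p$.

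The main obstacle will be the uniform molecular bound $\|\nabla L_w^{-1/2}m_j\|_{L^p(w,\rn)}\ls 1$ in Part (ii): this requires a careful balance among the scale $t$, the cancellation order $M$, and the decay in the annular index $k$, all carried out in the weighted setting. The restriction $w\in A_q(\rn)$ with $q\in[1,\,1+\frac{1}{n})$ enters precisely when invoking the weighted Sobolev-Poincar\'e inequality, so that the target exponent $p\ge 1$ lies in the admissible Sobolev range relative to $w$; forcing $p$ down to $1$ pushes $q$ up toward $1+\frac{1}{n}$.
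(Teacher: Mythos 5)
Your Part (i) reduces via Proposition \ref{thm equi} to $L^p(w,\rn)$-boundedness of $\nabla L_w^{-1/2}$, the same reduction the paper makes, but you then invoke a weighted Blunck--Kunstmann/Auscher extrapolation criterion, whereas the paper proves a weak-type $(p,p)$ endpoint bound at $p=\frac{2n}{n+1}$ directly (Theorem \ref{pro 4}), via a weighted Calder\'on--Zygmund decomposition in the style of Hofmann--Martell \cite{HM03}, and then applies Marcinkiewicz interpolation with the known $L^2(w,\rn)$ bound from \cite{CR13}. Both routes rest on the same underlying tools (Lemmas \ref{lem poincare}, \ref{lem 5.1}, \ref{lem 5.2}); the paper's version is self-contained and produces an explicit weak-type endpoint, while yours would require actually verifying the hypotheses of the extrapolation theorem in the weighted setting, which is essentially the same technical work under a different packaging. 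So (i) is a legitimate alternative, if less concrete as written.

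Part (ii) is where there is both a genuine divergence from the paper and a genuine gap. The paper's proof is a one-liner: it quotes the $H^1_{L_w}(\rn)\to L^1(w,\rn)$ bound from \cite[Theorem~1.6]{ZCJY14} (which is where the $w\in A_q(\rn)$, $q\in[1,1+\frac1n)$ hypothesis originates) and then interpolates, using Lemma \ref{lem inter-Hardy} for the $H^p_{L_w}$ scale and the classical complex interpolation of weighted Lebesgue spaces, between $p=1$ and $p\in(\frac{2n}{n+1},2]$. Your molecular-decomposition plan avoids reliance on \cite{ZCJY14} but, as sketched, has a real obstruction: the representation
\begin{equation*}
\nabla L_w^{-1/2}(m_j)=\frac{2}{\sqrt{\pi}}\int_0^\fz\nabla e^{-t^2L_w}\lf(L_w^{M}b_j\r)\,dt
\end{equation*}
is not absolutely convergent near $t=0$, since $\|\nabla e^{-t^2L_w}\|_{L^2(w,\rn)\to L^2(w,\rn)}\sim t^{-1}$ and $\int_0^{r_{B_j}} t^{-1}\,dt$ diverges; the rewrite $\nabla e^{-t^2L_w}L_w^M = t^{-2M}\nabla e^{-t^2L_w}(t^2L_w)^M$ only tames the $t\to\fz$ tail and worsens the $t\to 0$ singularity. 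For the small-$t$ (near-ball) contribution you must instead exploit the $L^2(w,\rn)$-boundedness of $\nabla L_w^{-1/2}$ applied directly to the molecule, together with off-diagonal bounds for operators of the form $\nabla L_w^{-1/2}(I-e^{-r^2L_w})^M$ (cf. Lemma \ref{lem 5.2}), rather than trying to bound the small-$t$ integrand scale-by-scale. Until that is addressed, the uniform molecular estimate $\|\nabla L_w^{-1/2}(m_j)\|_{L^p(w,\rn)}\ls 1$ is not established, and Part (ii) remains incomplete. Finally, your explanation of why the hypothesis tightens to $w\in A_q(\rn)$ with $q\in[1,1+\frac1n)$ is speculative; in the paper this hypothesis is simply inherited from the cited endpoint result in \cite{ZCJY14}, not from the Sobolev--Poincar\'e step.
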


Proposition \ref{thm main1} is proved in Section \ref{s4}.
We point out that, when $w\equiv 1$, Proposition \ref{thm main1}
is covered by \cite[Propositions 5.32 and 5.6]{HMM11}, where
Hofmann et al. proved that, for any $p\in [1,\,2+\varepsilon(L))$,
$\nabla L^{-1/2}$ is bounded from $H_L^p(\rn)$ to $L^p(\rn)$.
We prove Proposition \ref{thm main1} by using
the local weighted Poincar\'{e} inequality in \cite{FKS82}
(see also Lemma \ref{lem poincare} below),
which implies that, for $w\in A_2(\rn)$ and any $p\in(\frac{2n}{n+2},\,2]$,
$\nabla L_w^{-1/2}$ is bounded on $L^p(w,\,\rn)$. This restricts the results
of Proposition \ref{thm main1} to the narrower interval $p\in [1,\,2]$
instead of $p\in [1,\,2+\varepsilon(L))$.

\begin{prop}\label{thm main}
Let $n\geq 2$, $p\in(\frac{2n}{n+1},\,\frac{2n}{n-1})$
and $w\in A_p(\rn)\cap A_2(\rn)$.
Then there exists a positive constant $C$ such that, for any
$f\in L^2(w,\,\rn)\cap H_{L_w,{\rm Riesz}}^p(\rn)$,
\begin{eqnarray*}
\|f\|_{H_{L_w}^p(\rn)}\le C\lf\|\nabla L_w^{-1/2}(f)\r\|_{L^p(w,\,\rn)}.
\end{eqnarray*}
\end{prop}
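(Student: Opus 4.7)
The plan is to adapt to the weighted degenerate setting the tent-space approach used by Hofmann, Mayboroda and McIntosh \cite{HMM11} for the unweighted operator $L=-\div(A\nabla)$. Setting $g:=L_w^{-1/2}(f)$, so that $f=L_w^{1/2}(g)$ and, by hypothesis, $\nabla g\in L^p(w,\rn)$, the goal reduces to
\[
\bigl\|\cs_{L_w}(L_w^{1/2}g)\bigr\|_{L^p(w,\rn)}\ls\|\nabla g\|_{L^p(w,\rn)}.
\]
My starting point would be the pointwise factorization
\[
t^2L_we^{-t^2L_w}(L_w^{1/2}g)(y)=-tL_w^{1/2}\!\left[\tfrac{1}{w(y)}\div\bigl(tA\nabla e^{-t^2L_w}g\bigr)(y)\right],
\]
obtained by writing $L_w^{3/2}e^{-t^2L_w}=L_w^{1/2}\cdot L_we^{-t^2L_w}$ and then expanding $L_w=-\frac{1}{w}\div(A\nabla)$. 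With $\vec\Phi(y,t):=tA(y)\nabla e^{-t^2L_w}(g)(y)$, this reduces the problem to two sub-statements: (i) the weighted tent-space bound $\|\vec\Phi\|_{T^{p,2}_w(\rnn)}\ls\|\nabla g\|_{L^p(w,\rn)}$; and (ii) continuity from $T^{p,2}_w(\rnn)$ into $H_{L_w}^p(\rn)$ of the operator
\[
\Pi(\vec\Psi):=\int_0^\infty tL_w^{1/2}\bigl(\tfrac{1}{w}\div\vec\Psi(\cdot,t)\bigr)\,\frac{dt}{t}.
\]

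For (i), at $p=2$ one uses Fubini and the weighted $L^2$ bound for $\{t\nabla e^{-t^2L_w}\}_{t>0}$, a consequence of the weighted Caccioppoli inequality in \cite{FKS82} and the $L^2$ off-diagonal estimates for $\{e^{-tL_w}\}$ from \cite{CR13}. For $p\in(\frac{2n}{n+1},2)$, one extrapolates using the weighted $L^p$--$L^2$ off-diagonal estimates on balls provided by \cite{ZCJY14}, together with standard $A_p$-weighted norm inequalities for the vertical square functions built from $\nabla e^{-t^2L_w}$. For (ii), I would argue by molecular decomposition: given a $T^{p,2}_w$-atom $a$ supported in a Carleson tent over a ball $B\st\rn$, verify that $\Pi(a)$ is, up to a uniform multiplicative constant, a $(p,2,M,\ez)_{L_w}$-molecule of $H_{L_w}^p(\rn)$ adapted to $B$; summing over atoms then completes the argument. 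This step would rely on the off-diagonal decay of $\{(tL_w^{1/2})e^{-t^2L_w}\}_{t>0}$ together with the dual form of Proposition \ref{thm main1}, namely the $L^{p'}(w,\rn)$-boundedness of the Riesz transform associated to the adjoint $L_w^\ast$.

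I expect the principal difficulty to be verifying the molecular decay conditions in the weighted setting, since the off-diagonal tails must be summed against the doubling measure $w\,dx$ rather than $dx$, and the weighted off-diagonal estimates of \cite{ZCJY14} are available only in the restricted range $p,q\in[\frac{2n}{n+1},\frac{2n}{n-1}]$. This restriction is precisely what forces the hypothesis $p\in(\frac{2n}{n+1},\frac{2n}{n-1})$ in the statement: near the lower endpoint, one has to appeal to the weighted Sobolev embedding of Fabes-Kenig-Serapioni \cite{FKS82}, and the geometric sums controlling the molecular tails must be tracked carefully using the doubling property of $w\in A_p(\rn)\cap A_2(\rn)$.
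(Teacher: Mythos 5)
Your plan diverges substantially from the paper's argument. The paper splits the range: for $p\in[2,\frac{2n}{n-1})$ it argues by duality, pairing $L_w^{1/2}(f)$ against $g\in L^{p'}(w,\rn)$ through the sesquilinear form and invoking the $L^{p'}(w,\rn)$-boundedness of $\nabla(L_w^\ast)^{-1/2}$ for $p'\in(\frac{2n}{n+1},2]$ from Theorem~\ref{pro 4}; for $p\in(\frac{2n}{n+1},2)$ it proves a weak-type $(p,p)$ bound for $S_1(\sqrt{L_w}(\cdot))$ via a Calder\'on--Zygmund decomposition of $f$ in $\dot W^{1,p}(w,\rn)$ (good part by the $L^2$ theory, bad parts split by scale $t\lessgtr r_{B_i}$ and controlled through Proposition~\ref{pro ODEB}, Lemma~\ref{lem imbedding} and the Kolmogorov lemma), then interpolates. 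No tent-space decomposition of $t\nabla e^{-t^2L_w}g$ and no molecular theory for $H_{L_w}^p(\rn)$ appear.

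More importantly, your step (i) is not correct, and this is a genuine gap rather than a technicality. Already at $p=2$ in the model case $w\equiv1$, $L_w=-\Delta$, your claim would read
\begin{equation*}
\iint_{\rr_+^{n+1}}\bigl|t\nabla e^{t^2\Delta}g(y)\bigr|^2\,\frac{dy\,dt}{t}\ls\|\nabla g\|_{L^2(\rn)}^2,
\end{equation*}
but a Plancherel computation gives that the left-hand side equals $\tfrac14\|g\|_{L^2(\rn)}^2$, which is not controlled by $\|\nabla g\|_{L^2(\rn)}^2$ (scaling $g_\lambda(x)=g(\lambda x)$ makes the left side scale like $\lambda^{-n}$ and the right like $\lambda^{2-n}$). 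That is, the conical square function of $t\nabla e^{-t^2L_w}g$ lives at the regularity level of $g$, not of $\nabla g$: your factorization $t^2L_we^{-t^2L_w}(L_w^{1/2}g)=-(tL_w^{1/2})\bigl[\tfrac1w\div(tA\nabla e^{-t^2L_w}g)\bigr]$ is correct as an identity, but the inner operator $\tfrac1w\div(A\,\cdot)$ loses one derivative, and that derivative cannot be recovered from a tent-space bound on $\vec\Phi$; the two sub-steps (i) and (ii) therefore do not combine to give the proposition. Separately, with $\vec\Phi(y,t)=tA(y)\nabla e^{-t^2L_w}(g)(y)$ you also carry an extra pointwise factor $A(y)\sim w(y)$ into the $T^{p,2}_w$-norm, which worsens the mismatch; the natural normalization would put $A$ inside $\Pi$ and take $\vec\Phi(y,t)=t\nabla e^{-t^2L_w}(g)(y)$, but this relabeling does not repair the underlying problem in (i). The paper's CZ-decomposition of $\dot W^{1,p}(w,\rn)$ is precisely the device that keeps the estimate at the level of $\nabla g$ throughout, and it is hard to see how a straight tent-space/molecular route can avoid the derivative loss you encounter in (i).
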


Proposition \ref{thm main} is proved in Section \ref{s5}.
Its proof relies on the weighted off-diagonal
estimates on balls for $L_w$ (see Proposition \ref{pro ODEB} below)
and the local weighted Poincar\'{e} and Sobolev embedding inequalities
(see Lemmas \ref{lem poincare} and \ref{lem imbedding} below),
which restrict the results of Proposition \ref{thm main}
to $n\geq 2$, $p\in(\frac{2n}{n+1},\,\frac{2n}{n-1})$
and $w\in A_p(\rn)$ when $p\le2$.
Proposition \ref{thm main} is an analogue of
\cite[Proposition 5.34]{HMM11}, where Hofmann et al. proved that,
if, for some $r\in(1,\,2]$, the semigroup
$\{e^{-tL}\}_{t\geq 0}$ satisfies $L^r-L^2$ off-diagonal estimates, then,
for any $p\in (\max\{1,\,\frac{rn}{n+r}\},\,p_+(L))$,
there exists a positive constant $C$ such that, for
any $h\in L^2(\rn)\cap H_{L,\,{\rm Riesz}}^p(\rn)$,
$$\|h\|_{H_L^p(\rn)}\le C\lf\|\nabla L^{-1/2}(h)\r\|_{L^p(\rn)}.$$
Notice that, for any $r\in(1,\,2]$,
$(\frac{2n}{n+1},\,\frac{2n}{n-1})\st(\max\{1,\,\frac{rn}{n+r}\},\,p_+(L))$.
Thus, when $w\equiv1$, Proposition \ref{thm main} is
covered by \cite[Proposition 5.34]{HMM11}.

The rest of this article is organized as follows. In Subsection \ref{s2.0}, we first recall
some notions and results on Muckenhoupt weights;
in Subsection \ref{s2.1}, we recall the holomorphic functional calculus of $L_w$;
then, in Subsection \ref{s2.2},
we introduce the weighted off-diagonal estimates for $L_w$, which have been
established in \cite{ZCJY14}; in Subsection \ref{s2.3},
we recall the notion of the weighted tent space and recall some results on their dual and interpolation results.
In Section \ref{s3}, by following the strategy used in \cite[Section 4]{HMM11},
for $p\in(0,\,\fz)$, we establish the square function characterizations of $H_{L_w}^p(\rn)$
(see Propositions \ref{pro 1} and \ref{pro 2} below).

We end this section by making some conventions on notation. Throughout this article,
$L_w$ always denotes a degenerate elliptic operator as in \eqref{Lw}.
We denote by $C$ a positive constant which is independent of the main parameters,
but it may vary from line to line.
We also use $C_{(\az, \bz,\ldots)}$ to denote a positive constant depending on
the parameters $\az$, $\bz$, $\ldots$.
The \emph{symbol $f\ls g$} means that $f\le Cg$.
If $f\ls g$ and $g\ls f$, then we write $f\sim g$.
For any measurable subset $E$ of $\rn$, we denote by $E^\com$ the \emph{set $\rn\bh E$}.
Let $\nn:=\{1,\,2,\,\ldots\}$ and $\zz_+:=\nn\cup\{0\}$.
For any closed set $F\st\rn$, we let
\begin{equation}\label{eq tent}
R(F):=\bigcup_{x\in F}\bgz(x),
\end{equation}
where $\bgz(x)$, for all $x\in F$, is as in \eqref{cone} with $\az=1$.
For any $\mu\in(0,\,\pi)$, let
\begin{eqnarray}\label{eq sigma}
\Sigma_{\mu}^0:=\{z\in\mathbb{C}\setminus\{0\}:\ |\arg z|<\mu\}.
\end{eqnarray}
For any ball $B:=(x_B,r_B)\st\rn$ with $x_B\in\rn$ and $r_B\in(0,\,\fz)$, $\az\in(0,\fz)$ and $j\in\nn$,
we let $\az B:=B(x_B,\az r_B)$,
\begin{eqnarray}\label{eq-def of ujb}
U_0(B):=B\ \ \ \text{and}\ \ \ U_j(B):=(2^jB)\setminus (2^{j-1}B).
\end{eqnarray}
For any $p\in[1,\,\fz)$, $p'$ denotes its conjugate number,
namely, $1/p+1/p'=1$.

\section{Preliminaries}\label{s2}
\hskip\parindent
In this section, we first recall the definition of the Muckenhoupt weights and
some of their properties. Then we recall the holomorphic
functional calculus of $L_w$, as introduced by McIntosh \cite{M86},
and the weighted off-diagonal estimates on balls for $L_w$.
Finally, we introduce the weighted tent spaces and some of their properties.
\subsection{Muckenhoupt weights}\label{s2.0}

\hskip\parindent
Let $q\in[1,\fz)$. A nonnegative and locally integrable function $w$ on $\rn$
is said to belong to the \emph{Muckenhoupt class} $A_q(\rn)$
if there exists a positive constant $C$ such that, for any ball $B\st\rn$, when $q\in(1,\fz)$,
\begin{eqnarray}\label{eq A-p}
\frac{1}{|B|}\int_B w(x)\,dx\lf\{\frac{1}{|B|}\int_B [w(x)]^{-\frac{1}{q-1}}\,dx\r\}^{q-1}\le C
\end{eqnarray}
or, when $q=1$,
\begin{eqnarray}\label{eq A-1}
\frac{1}{|B|}\int_B w(x)\,dx\le C\essinf_{x\in B}w(x).
\end{eqnarray}
We also let
\begin{eqnarray}\label{eq A-fz}
A_\fz(\rn):=\bigcup_{q\in[1,\fz)}A_q(\rn)
\end{eqnarray}
and $w(E):=\int_E w(x)\,dx$
for any measurable set $E\st\rn$.

Let $r\in(1,\fz]$. A nonnegative locally integrable function $w$ is said to
belong to the \emph{reverse H\"older class $RH_r(\rn)$} if there exists
a positive constant $C$ such that, for any ball $B\st\rn$,
\begin{eqnarray*}
\lf\{\frac{1}{|B|}\int_B [w(x)]^r\,dx\r\}^{1/r}\le C\frac{1}{|B|}\int_B w(x)\,dx,
\end{eqnarray*}
where we replace  $\{\frac{1}{|B|}\int_B [w(x)]^r\,dx\}^{1/r}$ by $\|w\|_{L^\infty(B)}$ when $r=\infty$.

We recall some properties of Muckenhoupt weights and reverse H\"older classes
in the following two lemmas (see, for example, \cite{Du01} for their proofs).
\begin{lem}\label{lem Ap-1}
{\rm(i)} If $1\le p\le q\le\fz$, then $A_1(\rn)\st A_p(\rn)\st A_q(\rn)$.

{\rm(ii)} $A_\fz(\rn):=\cup_{p\in[1,\fz)}A_p(\rn)=\cup_{r\in(1,\fz]}RH_r(\rn)$.
\end{lem}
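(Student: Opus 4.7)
\noindent\textbf{Proof proposal for Lemma \ref{lem Ap-1}.}

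The plan for (i) is to chain three elementary comparisons, each driven by H\"older's inequality. First I would handle $A_1(\rn)\subset A_p(\rn)$ for $p\in(1,\infty)$: if $w\in A_1(\rn)$, then $\essinf_{x\in B}w(x)\le w(x)$ for a.e.\ $x\in B$, so $[w(x)]^{-1/(p-1)}\le(\essinf_B w)^{-1/(p-1)}$ a.e.\ on $B$, whence
\begin{equation*}
\lf\{\frac{1}{|B|}\int_B[w(x)]^{-1/(p-1)}\,dx\r\}^{p-1}\le (\essinf_B w)^{-1}.
\end{equation*}
Multiplying by $\frac{1}{|B|}\int_B w$ and invoking \eqref{eq A-1} yields \eqref{eq A-p}. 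Second, for $1<p\le q<\infty$, apply H\"older's inequality with exponent $(q-1)/(p-1)\ge 1$ to obtain
\begin{equation*}
\frac{1}{|B|}\int_B[w(x)]^{-\frac{1}{q-1}}\,dx\le\lf\{\frac{1}{|B|}\int_B[w(x)]^{-\frac{1}{p-1}}\,dx\r\}^{\frac{p-1}{q-1}},
\end{equation*}
raise both sides to the power $q-1$, and multiply by $\frac{1}{|B|}\int_B w$ to dominate the $A_q$ quantity by the $A_p$ quantity. Finally, the case $q=\infty$ is handled by interpreting the $A_\infty$ condition as in \eqref{eq A-fz}, which is literally a union, so the inclusion into $A_\infty(\rn)$ is immediate.

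For (ii), the plan is to prove the two inclusions separately. The inclusion $\cup_{p\in[1,\infty)}A_p(\rn)\subset\cup_{r\in(1,\infty]}RH_r(\rn)$ relies on the self-improvement property of Muckenhoupt weights: if $w\in A_p(\rn)$, then there exist $r\in(1,\infty)$ and $C\in(0,\infty)$ such that, for every ball $B$,
\begin{equation*}
\lf\{\frac{1}{|B|}\int_B[w(x)]^r\,dx\r\}^{1/r}\le C\,\frac{1}{|B|}\int_B w(x)\,dx.
\end{equation*}
This reverse H\"older inequality is deduced from a Calder\'on--Zygmund stopping-time decomposition applied to level sets of $w$, exploiting the doubling of $w$ that follows from the $A_p$ condition; it gives $w\in RH_r(\rn)$ for the same $r$. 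Conversely, the inclusion $\cup_{r\in(1,\infty]}RH_r(\rn)\subset\cup_{p\in[1,\infty)}A_p(\rn)$ follows from the dual self-improvement: a weight in $RH_r(\rn)$ is doubling and, by another stopping-time/good-$\lambda$ argument applied to $w^{-1/(p-1)}$ for $p$ sufficiently large depending on $r$ and the $RH_r$ constant, satisfies \eqref{eq A-p}.

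The main obstacle is the self-improvement step in (ii); the two H\"older manipulations in (i) are elementary, but upgrading an $A_p$ bound to a reverse H\"older inequality (and vice versa) genuinely requires a covering argument. In practice I would invoke these two facts as standard results from the Muckenhoupt theory, citing the book by Duoandikoetxea already referenced in the statement, rather than reproducing the Calder\'on--Zygmund decomposition in detail. Everything else reduces to H\"older's inequality and the definitions \eqref{eq A-p}, \eqref{eq A-1}, \eqref{eq A-fz}.
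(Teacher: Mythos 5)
Your proposal is correct: part (i) is the standard chain of H\"older estimates and part (ii) correctly reduces to the classical self-improvement (reverse H\"older) facts, which you rightly propose to quote from Duoandikoetxea's book. This matches the paper exactly, since the paper offers no proof of its own and simply cites that same reference \cite{Du01} for these standard properties of $A_p(\rn)$ and $RH_r(\rn)$.
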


\begin{lem}\label{lem Ap-2}
Let $q\in[1,\fz)$ and $r\in (1,\fz]$. If a nonnegative measurable function
$w\in A_q(\rn)\cap RH_r(\rn)$, then there exists a constant $C\in(1,\fz)$
such that, for any ball $B\st\rn$ and any measurable subset $E$ of $B$,
$$C^{-1}\lf(\frac{|E|}{|B|}\r)^q\le\frac{w(E)}{w(B)}\le C\lf(\frac{|E|}{|B|}\r)^{\frac{r-1}{r}}.$$
\end{lem}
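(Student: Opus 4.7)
The plan is to prove the two inequalities separately: the upper bound comes from the reverse H\"older condition $w\in RH_r(\rn)$, while the lower bound comes from the Muckenhoupt condition $w\in A_q(\rn)$. In both cases, H\"older's inequality is the key ingredient.

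For the upper bound, I would start with any measurable $E\st B$ and apply H\"older's inequality with conjugate exponents $r$ and $r/(r-1)$ to obtain
\begin{equation*}
w(E)=\dint_E w(x)\,dx\le \lf(\dint_B [w(x)]^r\,dx\r)^{1/r}|E|^{(r-1)/r}.
\end{equation*}
The $RH_r(\rn)$ condition, rewritten as $(\int_B w^r)^{1/r}\le C|B|^{1/r-1}w(B)$, then yields
\begin{equation*}
w(E)\le C\,w(B)\lf(\dfrac{|E|}{|B|}\r)^{(r-1)/r}.
\end{equation*}
The endpoint $r=\fz$ is handled identically by replacing the $L^r(B)$ norm with $\|w\|_{L^\fz(B)}$ and using that, when $r=\fz$, the reverse H\"older condition reads $\|w\|_{L^\fz(B)}\ls w(B)/|B|$.

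For the lower bound, in the generic case $q\in(1,\fz)$, I would again use H\"older's inequality, this time with exponents $q$ and $q/(q-1)$, applied to the factorization $1=w^{1/q}w^{-1/q}$:
\begin{equation*}
|E|=\dint_E w(x)^{1/q}w(x)^{-1/q}\,dx\le [w(E)]^{1/q}\lf(\dint_B [w(x)]^{-1/(q-1)}\,dx\r)^{(q-1)/q}.
\end{equation*}
Raising both sides to the $q$-th power and inserting the $A_q(\rn)$ estimate
$(\int_B w^{-1/(q-1)})^{q-1}\le C|B|^q/w(B)$, which is just a rearrangement of \eqref{eq A-p}, gives $|E|^q\le C\,w(E)|B|^q/w(B)$. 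Rearranging this inequality produces the desired lower bound $w(E)/w(B)\ge C^{-1}(|E|/|B|)^q$. The case $q=1$ is more direct: the $A_1(\rn)$ condition \eqref{eq A-1} shows that $w(B)/|B|\le C\essinf_{x\in B}w(x)$, so
\begin{equation*}
w(E)\ge |E|\essinf_{x\in B}w(x)\ge C^{-1}\dfrac{|E|}{|B|}w(B).
\end{equation*}

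I do not expect any real obstacle here; the argument is a standard two-line computation for each direction, and the only mild care is in treating the endpoints $r=\fz$ and $q=1$ separately from the generic H\"older arguments used in the interior of the parameter ranges.
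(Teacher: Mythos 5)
Your proof is correct and complete. The paper itself does not prove this lemma; it only cites Duoandikoetxea's book \cite{Du01} for its proof, and the argument you give is exactly the standard textbook one: H\"older's inequality applied to $\int_E w$ paired with the $RH_r$ condition for the upper bound, and H\"older applied to $\int_E 1 = \int_E w^{1/q}w^{-1/q}$ paired with the $A_q$ condition for the lower bound, with the endpoints $r=\infty$ and $q=1$ handled by the direct $L^\infty$/essential-infimum estimates. All four cases check out, and the only cosmetic point is that the resulting constant can always be enlarged to land in $(1,\infty)$ as the statement requires.
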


\subsection{Holomorphic functional calculi for $L_w$}\label{s2.1}
\hskip\parindent
Let $L_w$ be the degenerate elliptic operator
as in \eqref{Lw} with the matrix $A$ satisfying the degenerate elliptic
conditions \eqref{degenerate C1} and \eqref{degenerate C2}.
By \cite[pp.\,291-294]{CR08}, we know that $L_w$ is an
operator of \emph{type $\omega$} with $\omega:=\arctan(\Lambda/\lambda)\in(0,\,\pi/2)$
(see \cite{M86} for the denfinition),
where $0<\lambda\le\Lambda<\fz$
are as in \eqref{degenerate C1} and \eqref{degenerate C2}, and
$-L_w$ generates a holomorphic semigroup in the sector $\Sigma_{\pi/2-\omega}^0$,
where $\Sigma_{\pi/2-\omega}^0$ is as in \eqref{eq sigma} with $\mu$ replaced by
$\pi/2-\omega$.

Furthermore, $L_w$ has a bounded holomorphic functional calculus on $L^2(w,\,\rn)$
as defined by McIntosh \cite{M86} (see also \cite[Lecture 4]{ADM96}).
We now recall some preliminary definitions.

For any $\mu\in(0,\,\pi/2)$, define
\begin{eqnarray*}
H_\fz(\Sigma_{\mu}^0):=\lf\{f:\ \Sigma_{\mu}^0\to\mathbb{C} \text{ is holomorphic and }
\|f\|_{L^\fz(\Sigma_{\mu}^0)}<\fz\r\}.
\end{eqnarray*}
For any $\alpha,\beta\in(0,\,\fz)$, let
\begin{eqnarray}\label{eq 2.0}
\Psi_{\az,\,\bz}(\Sigma_{\mu}^0):=\lf\{\psi\in H_\fz(\Sigma_{\mu}^0):\
|\psi(z)|\le C\frac{|z|^\az}{1+|z|^{\az+\bz}},\ \forall z\in\Sigma_{\mu}^0\r\},
\end{eqnarray}
where $C$ is a positive constant independent of $z\in\Sigma_\mu^0$.

Let $\Psi(\Sigma_{\mu}^0):=\cup_{\az,\bz\in(0,\,\fz)}\Psi_{\az,\,\bz}(\Sigma_{\mu}^0)$.
For any $\mu\in(\omega,\,\pi/2)$ and $\psi\in\Psi(\Sigma_{\mu}^0)$,
define
\begin{eqnarray}\label{eq def1}
\psi(L_w):=\frac{1}{2\pi i}\int_{\gamma}\psi(\zeta)(\zeta I-L_w)^{-1}\,d\zeta,
\end{eqnarray}
where $\gamma:=\{re^{i\nu}:\ r\in(0,\,\fz)\}\cup\{re^{-i\nu}:\ r\in(0,\,\fz)\}$,
$\nu\in(\omega,\,\mu)$, is a curve consisting of two rays parameterized anti-clockwise.

In general, for any $\psi\in H_\fz(\Sigma_\mu^0)$, $\psi(L_w)$ can be defined
by a limiting procedure (see \cite[Theorem G]{ADM96}).

\subsection{Weighted off-diagonal estimates on balls for $L_w$}\label{s2.2}

\hskip\parindent
The notion of weighted off-diagonal estimates on balls was first
introduced by Auscher and Martell in \cite{am07ii}.

\begin{defn}[\cite{am07ii}]\label{def ODEB}
Let $p,\,q\in[1,\fz]$ with $p\le q$, $w\in A_\fz(\rn)$ and $\{T_t\}_{t>0}$ be a family of sublinear
operators. The family $\{T_t\}_{t>0}$ is said to satisfy \emph{weighted $L^p$-$L^q$
off-diagonal estimates on balls}, denoted by $T_t\in \mathcal{O}_{w}(L^p-L^q)$, if
there exist constants $\theta_1,\,\theta_2 \in[0,\fz)$ and $C,\,c\in(0,\fz)$ such that,
for any $t\in(0,\fz)$, any ball $B:=B(x_B,r_B)\subset\rn$ with $x_B\in\rn$ and $r_B\in (0,\fz)$,
and any $f\in L^p_{\loc}(w,\,\rn)$,
\begin{eqnarray}\label{ODEB1}
&&\lf\{\frac{1}{w(B)}\dint_B|T_t\lf(\chi_Bf\r)(x)|^qw(x)\,dx\r\}^{1/q}\\
&&\hs\le C
\lf[\Upsilon\lf(\frac{r_B}{t^{1/2}}\r)\r]^{\theta_2}\lf\{\frac{1}{w(B)}\dint_B
|f(x)|^pw(x)\,dx\r\}^{1/p}\noz
\end{eqnarray}
and, for any $j\in\nn\cap [3,\,\fz)$,
\begin{eqnarray*}
&&\lf\{\frac{1}{w(2^jB)}\dint_{U_j(B)}|T_t\lf(\chi_{B}f\r)(x)|^qw(x)\,dx\r\}^{1/q}\\ \nonumber
&&\hs\le C
2^{j\theta_1}\lf[\Upsilon\lf(\frac{2^jr_B}{t^{1/2}}\r)\r]^{\theta_2}
e^{-c\frac{(2^jr_B)^{2}}{t}}\lf\{\frac{1}{w(B)}\dint_B
|f(x)|^pw(x)\,dx\r\}^{1/p}
\end{eqnarray*}
and
\begin{eqnarray}\label{ODEB3}
&&\lf\{\frac{1}{w(B)}\dint_B |T_t(\chi_{U_j(B)}f)(x)|^qw(x)\,dx\r\}^{1/q}\\ \nonumber
&&\hs\le C 2^{j\theta_1}\lf[\Upsilon\lf(\frac{2^jr_B}{t^{1/2}}\r)\r]^{\theta_2}
e^{-c\frac{(2^jr_B)^{2}}{t}}\lf\{\frac{1}{w(2^jB)}\dint_{U_j(B)}
|f(x)|^pw(x)\,dx\r\}^{1/p},
\end{eqnarray}
where $U_j(B)$ is as in \eqref{eq-def of ujb} and, for all $s\in(0,\fz)$,
$\Upsilon(s):=\max\{s,\frac{1}{s}\}$.
\end{defn}

The following proposition is just \cite[Proposition 1.5]{ZCJY14}.
\begin{prop}[\cite{ZCJY14}]\label{pro ODEB}
Let $w\in A_2(\rn)$ and $k\in\zz_+$.
Then, for any $\frac{2n}{n+1}\le p\le q\le\frac{2n}{n-1}$,
$(tL_w)^ke^{-tL_w}\in\mathcal{O}_{w}(L^p-L^q)$.
\end{prop}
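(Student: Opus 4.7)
The plan is to bootstrap from the base $L^2(w)$-$L^2(w)$ Davies--Gaffney estimates for $(tL_w)^k e^{-tL_w}$ to the full $L^p(w)$-$L^q(w)$ range by combining the weighted Sobolev embedding on balls (Lemma \ref{lem imbedding}) with a duality argument, following the Auscher--Martell methodology adapted to the degenerate setting.

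First, I would establish the $L^2(w)$-$L^2(w)$ off-diagonal estimates via Davies' exponential perturbation. For a bounded Lipschitz function $\varphi$ with $|\nabla\varphi|\le 1$ and $\rho>0$, consider the conjugated evolution $v_t := e^{\rho\varphi} e^{-tL_w}(e^{-\rho\varphi}f)$. Differentiating $\|v_t\|_{L^2(w,\rn)}^2$ in $t$ and invoking the sesquilinear form \eqref{eq sesqui form} together with the ellipticity \eqref{degenerate C2} and boundedness \eqref{degenerate C1}, the cross terms produced by the perturbation are controlled by Cauchy--Schwarz to yield $\partial_t \|v_t\|_{L^2(w,\rn)}^2\ls \rho^2 \|v_t\|_{L^2(w,\rn)}^2$. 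Gronwall, combined with the Cauchy integral representation \eqref{eq def1} of $(tL_w)^k e^{-tL_w}$ on a sector contour, gives
\begin{eqnarray*}
\lf\|e^{\rho\varphi}(tL_w)^k e^{-tL_w}(e^{-\rho\varphi} f)\r\|_{L^2(w,\rn)} \ls e^{C\rho^2 t}\|f\|_{L^2(w,\rn)}.
\end{eqnarray*}
Choosing $\varphi$ to be a Lipschitz-truncated multiple of the distance to $B$ and optimizing $\rho \sim 2^j r_B/t$ produces the Gaussian decay $e^{-c(2^j r_B)^2/t}$, with the $\Upsilon$ factor absorbing the polynomial corrections from the contour parameterization and the powers of $tL_w$.

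Next, I would upgrade the integrability on the target side from $L^2(w,\rn)$ to $L^q(w,\rn)$ for $q\in(2, \frac{2n}{n-1}]$. Writing $(tL_w)^k e^{-tL_w} = (tL_w)^k e^{-tL_w/2}\circ e^{-tL_w/2}$ and exploiting the analyticity of the semigroup on the sector, one controls both $\|(tL_w)^k e^{-tL_w/2}g\|_{L^2(w,\rn)}$ and $t^{1/2}\|\nabla[(tL_w)^k e^{-tL_w/2}g]\|_{L^2(w,\rn)}$ by $\|g\|_{L^2(w,\rn)}$. The weighted Sobolev embedding of Lemma \ref{lem imbedding}, applied after covering $U_j(B)$ by a bounded number of balls of radius $\sim r_B$ and handling cutoff commutators by a Caccioppoli-type argument, then upgrades the output exponent to $L^q(w,\rn)$ on each subball while preserving the Gaussian decay from step one, at the cost of an extra polynomial factor absorbed into $2^{j\theta_1}$. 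Dually, applying the same argument to the adjoint operator $L_w^* = -w^{-1}\div(A^*\nabla)$, which is again a degenerate elliptic operator with the same $A_2$ weight, yields the $L^p(w,\rn)$-$L^2(w,\rn)$ estimates for $p\in[\frac{2n}{n+1},2)$ via the weighted duality $(L^p(w,\rn))^*=L^{p'}(w,\rn)$ under the pairing $\langle f,g\rangle_w := \int_\rn f\overline{g}\,w\,dx$.

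Finally, composing the $L^p(w,\rn)$-$L^2(w,\rn)$ and $L^2(w,\rn)$-$L^q(w,\rn)$ estimates through the identity $(tL_w)^k e^{-tL_w} = 2^k\,((t/2)L_w)^{k_1} e^{-tL_w/2}\circ ((t/2)L_w)^{k_2}e^{-tL_w/2}$ with $k_1+k_2=k$ yields \eqref{ODEB1}--\eqref{ODEB3}: the outer factors change the integrability exponents on $B$ and $U_j(B)$, while the middle $L^2(w,\rn)$-$L^2(w,\rn)$ step carries the Gaussian decay $e^{-c(2^jr_B)^2/t}$ across annuli. The hard part is ensuring the annular Gaussian decay survives the Sobolev upgrade step, since a naive global embedding destroys spatial localization; the remedy is the careful annular partition with Caccioppoli-type control of the cutoff commutators, and it is precisely the range of the available FKS-type weighted Sobolev embedding for general $w\in A_2(\rn)$ that pins the admissible exponents to the dual Sobolev interval $[\frac{2n}{n+1},\frac{2n}{n-1}]$.
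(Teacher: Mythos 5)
This proposition is not proved in the paper at all: it is quoted verbatim from \cite[Proposition 1.5]{ZCJY14}, and the authors' own commentary (in the discussion following Proposition \ref{thm equi}) states that the cited proof rests on the local weighted Sobolev embedding of \cite{FKS82}, i.e.\ Lemma \ref{lem imbedding}, which is exactly the key ingredient in your outline. Your sketch --- Davies--Gaffney $L^2(w)$--$L^2(w)$ bounds via exponential perturbation, the $L^2(w)$--$L^q(w)$ upgrade through the FKS embedding with Caccioppoli control of the cutoffs, duality with $L_w^\ast$ for the $L^p(w)$--$L^2(w)$ half, and composition via the semigroup law --- is the standard Auscher--Martell route in the degenerate setting and appears sound, with only minor bookkeeping imprecisions (e.g.\ covering $U_j(B)$ at scale $r_B$ requires on the order of $2^{jn}$ balls rather than a bounded number, which is harmless since the resulting polynomial factor is absorbed into $2^{j\theta_1}$ and dominated by the Gaussian).
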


The following lemma is an analogue of \cite[Lemma 2.40]{HMM11},
whose proof being omitted.
\begin{lem}\label{lem 2.0}
Let $\omega:=\arctan(\Lambda/\lambda)$, $\mu\in(\omega,\,\pi/2)$
and $\sigma_1,\,\sigma_2,\,\tau_1,\,\tau_2\in(0,\,\fz)$.
Assume that $\psi\in\Psi_{\sigma_1,\tau_1}(\Sigma_\mu^0)$,
$\wz\psi\in\Psi_{\sigma_2,\tau_2}(\Sigma_\mu^0)$ and $f\in H_\fz(\Sigma_\mu^0)$.
Then, for any $a\in(0,\,\min\{\sigma_1,\,\tau_2\})$ and $b\in(0,\,\min\{\sigma_2,\,\tau_1\})$,
there exists a family of sub-linear operators, $\{T_{s,t}\}_{s,t>0}$, such that
\begin{eqnarray*}
\psi(L_w)\circ f(L_w)\circ \wz\psi(L_w)=\min\lf\{\lf(\frac{s}{t}\r)^a,\,
\lf(\frac{t}{s}\r)^b\r\}T_{s,t},
\end{eqnarray*}
where $\{T_{s,t}\}_{s,t>0}$ have the following properties:

(i) There exists a positive constant $C$, independent of $s$,
such that, for any $t\in[s,\,\fz)$, any closed sets $E$ and $F$ of
$\rn$ and $f\in L^2(w,\,\rn)$ with $\supp f\st E$,
\begin{eqnarray*}
\|T_{s,t}(f)\|_{L^2(w,\,F)}\le C\|f\|_{L^\fz(\Sigma_\mu^0)}
\lf[\min\lf\{1,\,\frac{t}{[d(E,\,F)]^2}\r\}\r]^{\sigma_2+a}\|f\|_{L^2(w,\,E)}.
\end{eqnarray*}

(ii) There exists a positive constant $C$, independent of $t$,
such that, for any $s\in[t,\,\fz)$, any closed sets $E$ and $F$ of
$\rn$ and $f\in L^2(w,\,\rn)$ with $\supp f\st E$,
\begin{eqnarray*}
\|T_{s,t}(f)\|_{L^2(w,\,F)}\le C\|f\|_{L^\fz(\Sigma_\mu^0)}
\lf[\min\lf\{1,\,\frac{s}{[d(E,\,F)]^2}\r\}\r]^{\sigma_1+b}\|f\|_{L^2(w,\,E)}.
\end{eqnarray*}

\end{lem}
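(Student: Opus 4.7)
The statement is the weighted analogue of \cite[Lemma 2.40]{HMM11} and the strategy is the same; the dependence on $s,t$ on the left-hand side should be understood via $\psi(L_w)=\psi(sL_w)$ and $\wz\psi(L_w)=\wz\psi(tL_w)$ under the natural rescaling of the spectral parameter.

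The plan is, in the regime $s\le t$, to shift $a$ powers of the spectral variable from $\psi$ to $\wz\psi$ by setting $\phi(z):=z^{-a}\psi(z)$ and $\wz\phi(z):=z^a\wz\psi(z)$. Since $a<\sigma_1$ and $a<\tau_2$, the defining estimate \eqref{eq 2.0} yields $\phi\in\Psi_{\sigma_1-a,\,\tau_1+a}(\Sigma_\mu^0)$ and $\wz\phi\in\Psi_{\sigma_2+a,\,\tau_2-a}(\Sigma_\mu^0)$. The pointwise identity $\psi(sz)f(z)\wz\psi(tz)=(s/t)^a\phi(sz)f(z)\wz\phi(tz)$, together with the bounded $H_\fz$ functional calculus of $L_w$ on $L^2(w,\rn)$, then produces the operator identity with $T_{s,t}:=\phi(sL_w)\circ f(L_w)\circ\wz\phi(tL_w)$. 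For (i) I would bound $\|T_{s,t}(g)\|_{L^2(w,\,F)}$ by invoking the $L^2(w,\,\rn)$-boundedness of $\phi(sL_w)$ and $f(L_w)$ (the latter a consequence of McIntosh's theorem applied to $L_w$, using $\omega<\mu$) to reduce matters to the weighted Davies--Gaffney-type inequality
\[
\lf\|\wz\phi(tL_w)(\chi_E g)\r\|_{L^2(w,\,F)}\ls\|f\|_{L^\fz(\Sigma_\mu^0)}\lf[\min\lf\{1,\,\frac{t}{[d(E,F)]^2}\r\}\r]^{\sigma_2+a}\|g\|_{L^2(w,\,E)},
\]
which I would prove by inserting the contour representation \eqref{eq def1} for $\wz\phi(tL_w)$, splitting the contour at $|\zeta|\sim 1/t$, and plugging in weighted $L^2(w,\,\rn)$-$L^2(w,\,\rn)$ off-diagonal estimates for the resolvent $(\zeta I-L_w)^{-1}$; these in turn follow from the $p=q=2$ case of Proposition \ref{pro ODEB} via a standard Laplace subordination on the semigroup $\{e^{-rL_w}\}_{r>0}$.

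For the symmetric regime $s\ge t$, I would run the analogous argument with $\wh\phi(z):=z^b\psi(z)$ and $\wh{\wz\phi}(z):=z^{-b}\wz\psi(z)$, extracting the factor $(t/s)^b$ and obtaining (ii) by exploiting the decay of $\wz\psi$ near the origin (exponent $\sigma_2$) and of $\psi$ at infinity (exponent $\tau_1$). The main obstacle is the production of the sharp exponent $\sigma_2+a$ (resp.\ $\sigma_1+b$) inside the minimum: the crude Davies--Gaffney estimate yields only a fixed exponent, and the extra $a$ (resp.\ $b$) power arises precisely from balancing the contour integral against the near-zero decay of $\wz\phi$ (resp.\ $\wh\phi$). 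In the weighted context one must systematically replace the unweighted resolvent bounds used in the proof of \cite[Lemma 2.40]{HMM11} by the weighted off-diagonal estimates from Proposition \ref{pro ODEB}; once this substitution is carried out, the structural argument transfers without essential change.
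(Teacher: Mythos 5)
The paper gives no proof: it declares this lemma an analogue of \cite[Lemma 2.40]{HMM11} with the proof omitted, so your reconstruction can only be checked against that template. Your reading of the statement (the implicit rescalings $\psi(sL_w)$, $\wz\psi(tL_w)$), the power shift $\phi(z):=z^{-a}\psi(z)$, $\wz\phi(z):=z^a\wz\psi(z)$ for $s\le t$, the memberships $\phi\in\Psi_{\sigma_1-a,\,\tau_1+a}(\Sigma_\mu^0)$ and $\wz\phi\in\Psi_{\sigma_2+a,\,\tau_2-a}(\Sigma_\mu^0)$ (which is exactly where $a<\min\{\sigma_1,\tau_2\}$ enters), and the definition $T_{s,t}:=\phi(sL_w)f(L_w)\wz\phi(tL_w)$ all correctly mirror the HMM11 construction, as does your closing observation that passing to the weighted setting is a matter of substituting the weighted Davies--Gaffney and off-diagonal estimates (Lemma \ref{lem 5.1}, Proposition \ref{pro ODEB}) for the unweighted ones.

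Your reduction for the off-diagonal bound (i) is where the argument breaks. Invoking $L^2(w,\,\rn)$-boundedness of $\phi(sL_w)$ and $f(L_w)$ and then a Davies--Gaffney estimate for $\wz\phi(tL_w)$ alone does not yield an off-diagonal estimate for the composite: $\wz\phi(tL_w)$ acts first, so $\chi_F T_{s,t}(g)$ is not $\phi(sL_w)f(L_w)(\chi_F\wz\phi(tL_w)(g))$, and $\phi(sL_w)f(L_w)$ (with $f(L_w)$ merely $L^2$-bounded, hence unlocalized) spreads whatever localization $\wz\phi(tL_w)$ produced. Even the more careful alternative of composing genuine off-diagonal bounds for $\wz\phi(tL_w)$ (exponent $\sigma_2+a$, scale $t$) and $\phi(sL_w)f(L_w)$ (exponent $\sigma_1-a$, scale $s$) via an annular decomposition only gives the exponent $\min\{\sigma_1-a,\,\sigma_2+a\}$, which falls short whenever $\sigma_1-a<\sigma_2+a$, e.g.\ at $s=t$ with $a$ close to $\sigma_1$. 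The correct device --- the one behind \cite[Lemma 2.40]{HMM11} --- is to treat $T_{s,t}$ as a \emph{single} $\Psi$-functional calculus at scale $t$: writing $T_{s,t}=h_{s,t}(tL_w)$ with $h_{s,t}(\zeta):=\phi(s\zeta/t)f(\zeta/t)\wz\phi(\zeta)$, the bound $|h_{s,t}(\zeta)|\le\|\phi\|_{L^\fz(\Sigma_\mu^0)}\|f\|_{L^\fz(\Sigma_\mu^0)}|\wz\phi(\zeta)|$, valid for all $0<s\le t$, shows $h_{s,t}\in\Psi_{\sigma_2+a,\,\tau_2-a}(\Sigma_\mu^0)$ uniformly. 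Your contour-splitting argument (insert \eqref{eq def1}, split at $|\zeta|\sim1/t$, apply the weighted resolvent off-diagonal bounds) is exactly the right mechanism, but it must be run on $h_{s,t}(tL_w)$ as a whole, not on the factor $\wz\phi(tL_w)$; the exponent $\sigma_2+a$ is then the near-zero decay of $h_{s,t}$. Part (ii) is the symmetric case at scale $s$, with $z^{b}\psi(z)\in\Psi_{\sigma_1+b,\,\tau_1-b}(\Sigma_\mu^0)$ carrying the near-zero decay $\sigma_1+b$.
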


\subsection{Weighted tent spaces}\label{s2.3}
\hskip\parindent
Let $w\in A_\fz(\rn)$ and $f$ be a measurable function on $\rr^{n+1}_+$.
For any $x\in\rn$, define
\begin{equation*}
A(f)(x):=\lf[\iint_{\bgz(x)}|f(y,t)|^2\,\frac{w(y)\,dy\,dt}{w(B(x,t))t}\r]^{1/2},
\end{equation*}
where $\bgz(x)$ is as in \eqref{cone} with $\az=1$.
For any $p\in(0,\fz)$, the \emph{weighted tent space}
$T^p(w,\,\rn)$ is defined to be the space of all measurable functions $f$
on $\rr_+^{n+1}$
such that $\|f\|_{T^p(w,\,\rn)}:=\|A(f)\|_{L^p(w,\,\rn)}<\fz$.

For any open set $O\st\rn$, the tent over $O$ is defined by
$$\wh O:=\{(x,t)\in\rnn:\ \dist(x,\, O^\com)\geq t\}.$$

Let $p\in (0,1]$ and $w\in A_\fz(\rn)$. A measurable function $a$ on $\rr_+^{n+1}$
is called a \emph{$(w,\,p,\,2)$-atom} if there exists a ball $B$ of $\rn$ such that

(i) $\supp a\subset \widehat{B}$;

(ii)
\begin{equation}\label{eq tent atom}
\lf[\int_0^\fz\int_\rn|a(y,t)|^2w(y)\,dy\,\frac{dt}{t}\r]^\frac{1}{2}
\le [w(B)]^{\frac{1}{2}-\frac{1}{p}}.
\end{equation}

Noticing that, for any $w\in A_\fz(\rn)$, $(\rn,\,|\cdot|,\,w(x)dx)$ is a space of
homogeneous type in the sense of Coifman and Weiss \cite{CW71,CW77},
the following lemma was proved in \cite[Theorem 1.1]{Ru07},
except for the last part concerning the $T^2(w,\,\rn)$ convergence.
By an argument similar to that used in the proof of \cite[Proposition 3.25]{HMM11},
we can show Lemma \ref{lem tent-decompositon},
the details being omitted.
\begin{lem}[\cite{Ru07}]\label{lem tent-decompositon}
Let $p\in (0,1]$, $w\in A_\fz(\rn)$ and $f\in T^p(w,\,\rn)$.
Then there exist a sequence of $(w,\,p,\,2)$-atoms, $\{a_j\}_{j\in\nn}$,
and $\{\lz_j\}_{j\in\nn}\st\cc$ such that
\begin{equation}\label{eq 2.0x}
f=\sum_{j\in\nn}\lz_j a_j,
\end{equation}
where the series converges in $T^p(w,\,\rn)$. Moreover, there exist positive constants
$\wz C$ and $C$, independent of $f$, such that
\begin{equation*}
\wz C\|f\|_{T^p(w,\,\rn)}\le \lf\{\sum_{j\in\nn}|\lz_j|^p\r\}^{1/p}\le C\|f\|_{T^p(w,\,\rn)}.
\end{equation*}
Furthermore, if $f\in T^p(w,\,\rn)\cap T^2(w,\,\rn)$, then the series in \eqref{eq 2.0x}
converges in both $T^p(w,\,\rn)$ and $T^2(w,\,\rn)$.
\end{lem}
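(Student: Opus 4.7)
The plan is to split the proof into two logically independent parts: the existence of the atomic decomposition together with the $T^p(w,\rn)$ convergence and the norm control, and the additional assertion of $T^2(w,\rn)$ convergence under the extra hypothesis $f\in T^2(w,\rn)$. For the first part, since $(\rn,|\cdot|,w(x)\,dx)$ is a doubling metric measure space whenever $w\in A_\fz(\rn)$, one simply quotes \cite[Theorem~1.1]{Ru07}, which supplies a sequence of $(w,p,2)$-atoms $\{a_j\}_{j\in\nn}$, scalars $\{\lz_j\}_{j\in\nn}\subset\cc$, and the required two-sided estimate between $\|f\|_{T^p(w,\,\rn)}$ and $\{\sum_j|\lz_j|^p\}^{1/p}$. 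Thus the only new content is the last sentence.

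For the $T^2(w,\rn)$ convergence, I would recall how the atoms in the Coifman–Meyer–Stein/Russ construction are actually built. One sets $O_k:=\{x\in\rn:\ A(f)(x)>2^k\}$ for $k\in\zz$, performs a Whitney decomposition of each $O_k$ into balls $\{B_j^k\}_{j}$, and defines
\begin{eqnarray*}
a_j^k(y,t):=\lz_{j,k}^{-1}\,f(y,t)\,\chi_{T_j^k}(y,t),
\end{eqnarray*}
where $T_j^k$ is a suitable subset of $\widehat{B_j^k}$ associated to the Whitney cube and to the difference $\widehat{O_k}\setminus\widehat{O_{k+1}}$, and $\lz_{j,k}$ is the normalizing factor making $a_j^k$ a $(w,p,2)$-atom (using the bound $\|f\chi_{T_j^k}\|_{L^2(w(y)dy\,dt/t)}\ls 2^k[w(B_j^k)]^{1/2}$, which is precisely where the level-set construction pays off). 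Since the family $\{T_j^k\}_{j,k}$ partitions, up to a null set, the support of $f$ in $\rnn$, the partial sums equal $f\chi_{E_N}$ for an increasing sequence of sets $E_N\uparrow \rnn$.

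For the $T^2(w,\rn)$ convergence I would then observe that, by Fubini's theorem and the doubling property of $w$,
\begin{eqnarray*}
\|g\|_{T^2(w,\,\rn)}^2\sim \int_0^{\fz}\int_{\rn}|g(y,t)|^2 w(y)\,\frac{dy\,dt}{t}
\end{eqnarray*}
for every measurable $g$ on $\rnn$. Applied to $g:=f-\sum_{(j,k):\,N_{j,k}\le N}\lz_{j,k} a_j^k = f\chi_{E_N^\com}$, the assumption $f\in T^2(w,\,\rn)$ makes the integrand $|f|^2 w(y)/t$ integrable on $\rnn$, and then the dominated convergence theorem, together with $E_N\uparrow\rnn$, yields $\|f\chi_{E_N^\com}\|_{T^2(w,\,\rn)}\to 0$ as $N\to\fz$. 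This upgrades the $T^p(w,\rn)$ convergence to simultaneous convergence in both spaces.

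The main technical point I anticipate is verifying that the ordering of the double-indexed family $\{(j,k)\}$ can be chosen so that the partial sums are genuinely of the form $f\chi_{E_N}$ with $E_N\uparrow\rnn$ (up to a null set), independently of $p$; this is a bookkeeping matter but is essential because the $T^2(w,\rn)$ convergence is otherwise not obviously compatible with the $T^p(w,\rn)$ convergence already given by Russ. Once this is arranged, the remainder is routine application of dominated convergence and the Fubini identity above. The argument mirrors the unweighted one in \cite[Proposition~3.25]{HMM11}, with $dx$ replaced by $w(x)\,dx$ throughout and Lemma \ref{lem Ap-2} invoked wherever doubling is used.
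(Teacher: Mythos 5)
Your proposal matches the paper's intended argument exactly: quote \cite[Theorem~1.1]{Ru07} for the atomic decomposition with $T^p(w,\,\rn)$ convergence and the norm equivalence, then upgrade to $T^2(w,\,\rn)$ convergence by exploiting that in the Coifman--Meyer--Stein/Russ construction the partial sums are $f\chi_{E_N}$ with $E_N$ increasing, so dominated convergence together with the Fubini identity $\|g\|_{T^2(w,\,\rn)}^2\sim\int_0^{\fz}\int_{\rn}|g(y,t)|^2w(y)\,\frac{dy\,dt}{t}$ (valid by doubling) gives the claim --- this is precisely the argument of \cite[Proposition~3.25]{HMM11} that the paper invokes. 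One small remark: the ordering issue you flag is not actually a concern, since any enumeration of $\{(j,k)\}$ yields partial sums of the form $f\chi_{E_N}$ with $E_N$ increasing to (a.e.) the support of $f$, and both the $T^p$ and $T^2$ convergences are then unconditional.
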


The following lemma establishes the complex interpolation property of the weighted tent spaces.
Noticing that, for any $w\in A_\fz(\rn)$,
$w(x)dx$ is a doubling measure on $\rn$, Lemma \ref{lem interpolation tent}
is just a special case of \cite[Proposition 3.18]{Am14}.
Here and hereafter, for any $\theta\in [0,\,1]$,
$[\cdot,\,\cdot]_\theta$ denotes the complex interpolation space
(see, for example, \cite[Chapter 4]{BL76} for the definition).

\begin{lem}[\cite{Am14}]\label{lem interpolation tent}
Let $w\in A_\fz(\rn)$. Then, for any $p_0,\,p_1\in [1,\,\fz)$ and $\theta\in[0,\,1]$,
it holds true that
\begin{equation*}
\lf[T^{p_0}(w,\,\rn),\,T^{p_1}(w,\,\rn)\r]_{\theta}=T^p(w,\,\rn),
\end{equation*}
where $1/p=\theta/p_0+(1-\theta)/p_1$.
\end{lem}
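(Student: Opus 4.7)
The plan is to realize $T^p(w,\,\rn)$ as a complemented subspace of a weighted Bochner-Lebesgue space with Hilbert-valued fibers and then reduce the stated identity to the classical complex interpolation theorem for such vector-valued $L^p$ spaces. Set $\ch:=L^2(\rnn,\,w(y)\,dy\,dt/t)$ and define an embedding $\iota:\,T^p(w,\,\rn)\to L^p(w,\,\rn;\,\ch)$ by
\begin{equation*}
(\iota F)(x)(y,t):=F(y,t)\,\chi_{\{|x-y|<t\}}(y)\,[w(B(x,t))]^{-1/2}.
\end{equation*}
A direct computation yields $\|(\iota F)(x)\|_{\ch}=A(F)(x)$ pointwise in $x\in\rn$, so $\iota$ is an isometry for every $p\in[1,\,\fz)$; in particular, at $p=2$ this recovers by Fubini the identification $T^2(w,\,\rn)\simeq L^2(\rnn,\,w(y)\,dy\,dt/t)$.

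Next I construct a bounded left inverse $\pi:\,L^p(w,\,\rn;\,\ch)\to T^p(w,\,\rn)$ by the weighted average
\begin{equation*}
(\pi G)(y,t):=\dfrac{1}{w(B(y,t))}\dint_{B(y,t)}G(x)(y,t)\,[w(B(x,t))]^{1/2}\,w(x)\,dx.
\end{equation*}
The cancellation of the weight factors gives $\pi\circ\iota=\mathrm{Id}$ on $T^p(w,\,\rn)$. To estimate $\|\pi G\|_{T^p(w,\,\rn)}$, I would apply the Cauchy-Schwarz inequality in the inner $x$-integral together with the doubling comparability $w(B(x,t))\sim w(B(y,t))$ for $x\in B(y,t)$ (valid since $w\in A_\fz(\rn)$), obtaining the pointwise bound
\begin{equation*}
|(\pi G)(y,t)|^{2}\ls\dfrac{1}{w(B(y,t))}\dint_{B(y,t)}|G(x)(y,t)|^{2}\,w(x)\,dx.
\end{equation*}
Inserting this into $A(\pi G)$ and interchanging the order of integration via Fubini reduces the $L^p(w)$-norm to one controlled by a Hardy-Littlewood-type maximal operator, which is bounded on $L^p(w,\,\rn)$ for $p\in[1,\,\fz)$ by the standard theory on the doubling measure space $(\rn,\,w(x)\,dx)$.

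With the retraction pair $(\iota,\,\pi)$ bounded on both endpoints, the retraction theorem for complex interpolation (see \cite[Theorem 6.4.2]{BL76}) reduces the statement to the classical identity
\begin{equation*}
\lf[L^{p_0}(w,\,\rn;\,\ch),\,L^{p_1}(w,\,\rn;\,\ch)\r]_{\theta}=L^{p}(w,\,\rn;\,\ch),
\end{equation*}
which holds for any Hilbert-valued Bochner scale over a $\sigma$-finite measure space.

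The main difficulty I anticipate is the uniform-in-$p$ boundedness of $\pi$: one must verify that every constant produced by doubling depends only on the $A_\fz(\rn)$ characteristic of $w$ and that the factors $[w(B(x,t))]^{\pm 1/2}$ in $\iota$ and $\pi$ cancel cleanly. The $p=1$ endpoint is particularly delicate, since a Cauchy-Schwarz loss in the inner integral is not permissible there; in that case I would instead invoke the atomic decomposition supplied by Lemma \ref{lem tent-decompositon} and verify the retraction estimate atom by atom, exploiting the support and normalization properties \eqref{eq tent atom} together with the doubling property of $w$.
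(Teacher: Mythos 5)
The paper does not prove this statement at all; it simply cites it as a special case of \cite[Proposition 3.18]{Am14}. Your proposal is therefore a genuinely independent route: a direct proof realizing $T^p(w,\,\rn)$ as a retract of the Bochner space $L^p(w,\,\rn;\,\ch)$. The embedding $\iota$ is indeed an isometry and $\pi\circ\iota=\mathrm{Id}$ holds, so the framework is sound up to that point.

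The gap lies in the claimed boundedness of $\pi\colon L^p(w,\,\rn;\,\ch)\to T^p(w,\,\rn)$, which your sketch reduces to ``Fubini plus a Hardy--Littlewood maximal estimate''. Two problems. First, a computational slip: with the factors $[w(B(x,t))]^{\pm1/2}$ built into $\iota$ and $\pi$, Cauchy--Schwarz together with $w(B(x,t))\sim w(B(y,t))$ yields $|(\pi G)(y,t)|^2\ls\int_{B(y,t)}|G(x)(y,t)|^2w(x)\,dx$, \emph{without} the leading factor $[w(B(y,t))]^{-1}$ that you wrote. Second and more seriously, even the corrected bound does not reduce to a scalar maximal estimate. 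Inserting it into $A(\pi G)(z)^2$, using $B(y,t)\subset B(z,2t)$ for $(y,t)\in\bgz(z)$, the doubling of $w$, and Fubini, gives
\begin{equation*}
A(\pi G)(z)^2\ls\int_0^\fz M_w\bigl(g(\cdot,t)\bigr)(z)\,\frac{dt}{t},\qquad\text{where}\quad g(x,t):=\int_\rn|G(x)(y,t)|^2w(y)\,dy,
\end{equation*}
while $\|G(x)\|_{\ch}^2=\int_0^\fz g(x,t)\,\frac{dt}{t}$. The desired estimate $\|\pi G\|_{T^p(w,\,\rn)}\ls\|G\|_{L^p(w,\,\rn;\,\ch)}$ would therefore require the vector-valued bound
\begin{equation*}
\lf\|\int_0^\fz M_w\bigl(g(\cdot,t)\bigr)\,\frac{dt}{t}\r\|_{L^{p/2}(w,\,\rn)}\ls\lf\|\int_0^\fz g(\cdot,t)\,\frac{dt}{t}\r\|_{L^{p/2}(w,\,\rn)},
\end{equation*}
which is the Fefferman--Stein maximal inequality at the $\ell^1$ (here $L^1(dt/t)$) endpoint, and that endpoint is known to fail. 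The scalar boundedness of $M_w$ on $L^p(w,\,\rn)$ for $p>1$ that you invoke does not supply it, and the supremum cannot be pulled outside the $t$-integral, since $\int_0^\fz M_w(g(\cdot,t))\,dt/t\geq M_w\bigl(\int_0^\fz g(\cdot,t)\,dt/t\bigr)$ points the wrong way. Your argument thus closes only at $p=2$, where $T^2(w,\,\rn)\simeq L^2(\rnn,w(y)\,dy\,dt/t)$ by Fubini and no maximal estimate is needed. The atom-by-atom fallback you mention could handle $p=1$, but the ranges $p\in(1,2)$ and $p\in(2,\fz)$ are left unjustified, so the retraction theorem cannot be invoked for general endpoints $p_0,p_1\in[1,\fz)$.
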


From \cite[Proposition 3.10]{Am14}, we deduce the following conclusion.
\begin{lem}[\cite{Am14}]\label{lem dual tent}
Let $p\in(1,\,\fz)$ and $w\in A_\fz(\rn)$. Then,
for any $f\in T^p(w,\,\rn)$ and $g\in T^{p'}(w,\,\rn)$, the pairing
\begin{equation*}
\langle f,\,g\rangle:=\iint_{\rr_+^{n+1}}f(y,t)\ov{g(y,t)}w(y)\,dy\,\frac{dt}{t}
\end{equation*}
realizes $T^{p'}(w,\,\rn)$ as the dual space of $T^p(w,\,\rn)$, up to equivalent
norms, where $1/p+1/p'=1$.
\end{lem}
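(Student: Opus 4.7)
The plan is to establish the duality in the two standard directions: that every $g\in T^{p'}(w,\rn)$ induces a bounded linear functional on $T^p(w,\rn)$ via $\langle\cdot,\,g\rangle$, with operator norm comparable to $\|g\|_{T^{p'}(w,\rn)}$; and, conversely, that every bounded linear functional on $T^p(w,\rn)$ arises in this way from some $g\in T^{p'}(w,\rn)$.

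For the first (easy) direction, I would prove the weighted Coifman--Meyer--Stein type inequality
$$\iint_{\rnn}|f(y,t)g(y,t)|w(y)\,\frac{dy\,dt}{t}\le C\int_\rn A(f)(x)A(g)(x)w(x)\,dx,$$
after which H\"older's inequality immediately yields $|\langle f,\,g\rangle|\le C\|f\|_{T^p(w,\rn)}\|g\|_{T^{p'}(w,\rn)}$. The displayed inequality is a Fubini computation: use $w(B(y,t))=\int_{B(y,t)}w(x)\,dx$ to multiply and divide inside the left-hand integrand, swap the order of integration via the geometric equivalence $(y,t)\in\bgz(x)\Leftrightarrow x\in B(y,t)$, invoke the doubling property of $w$ (valid since $w\in A_\fz(\rn)$, by Lemma \ref{lem Ap-1}) to replace $w(B(y,t))$ by $w(B(x,t))$ up to a uniform constant, and then apply the Cauchy--Schwarz inequality inside the cone integral.

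For the second (hard) direction, given $\ell\in(T^p(w,\rn))^*$, I would construct the representing function $g$ piece by piece. For each compact set $K\subset\rnn$, the subspace of functions in $T^p(w,\rn)$ supported in $K$ embeds continuously into $L^2(K,\,w(y)\,dy\,dt/t)$: indeed, if $\supp f\subset K$, then $A(f)$ is supported in a bounded spatial set depending only on $K$, and by doubling one checks that the $T^p(w,\rn)$ norm controls $\|f\|_{L^2(K,\,w(y)\,dy\,dt/t)}$ up to a $K$-dependent constant. Hence $\ell$ restricts to a bounded functional on this $L^2$ space, represented via the Riesz representation theorem by some $g_K$. A standard exhaustion and consistency argument then yields a locally $L^2$ function $g$ on $\rnn$ such that $\ell(f)=\iint_{\rnn}f(y,t)\ov{g(y,t)}w(y)\,dy\,dt/t$ for all compactly supported $f\in T^p(w,\rn)$. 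To upgrade $g$ to lie in $T^{p'}(w,\rn)$ with $\|g\|_{T^{p'}(w,\rn)}\le C\|\ell\|$, I would combine the first-direction bound with a truncation/extraction argument: by testing $\ell$ against carefully chosen functions $f$ concentrated where $A(g)$ is large, one recovers $\|A(g)\|_{L^{p'}(w,\rn)}$ as a supremum controlled by $\|\ell\|$; density of compactly supported functions in $T^p(w,\rn)$ then completes the identification.

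The main obstacle is this second direction, and in particular making rigorous both the piecewise construction of $g$ and the extraction of its $T^{p'}$ norm from $\|\ell\|$. An alternative route that shortens the construction would be to combine the trivial Hilbert-space case $p=p'=2$ (since $T^2(w,\rn)$ is isomorphic to $L^2(w(y)\,dy\,dt/t)$ with equivalent norms, by the very Fubini/doubling argument used above) with the complex interpolation statement of Lemma \ref{lem interpolation tent}, reducing matters to standard duality theory for complex interpolation scales. Either way, the essential weighted input is the doubling of $w$ coming from $w\in A_\fz(\rn)$, ensuring $w(B(x,t))\sim w(B(y,t))$ uniformly whenever $|x-y|<t$.
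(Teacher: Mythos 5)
The paper does not actually prove this lemma: it is quoted as a direct consequence of Amenta's result \cite[Proposition 3.10]{Am14}, which establishes tent space duality on general doubling metric measure spaces, applicable here because $w\in A_\infty(\rn)$ makes $w(x)\,dx$ doubling. Your direct route instead reconstructs the Coifman--Meyer--Stein tent space duality argument in the weighted setting, which is a self-contained alternative. The easy direction of your sketch is correct: writing $w(B(y,t))=\int_{B(y,t)}w(x)\,dx$, applying Fubini with $(y,t)\in\bgz(x)\Leftrightarrow x\in B(y,t)$, using doubling to replace $w(B(y,t))$ by $w(B(x,t))$ when $|x-y|<t$, applying Cauchy--Schwarz inside the cone, and finishing with H\"older indeed gives $|\langle f,\,g\rangle|\ls\|f\|_{T^p(w,\,\rn)}\|g\|_{T^{p'}(w,\,\rn)}$.

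Two caveats about the rest. First, the phrase ``testing $\ell$ against carefully chosen $f$ concentrated where $A(g)$ is large'' conceals the substantive step. After truncating to a compactly supported $g_R$, the standard choice is $f(y,t):=\ov{g_R(y,t)}\,\frac{1}{w(B(y,t))}\int_{B(y,t)}[A(g_R)(x)]^{p'-2}w(x)\,dx$; the same Fubini computation then gives $\|A(g_R)\|_{L^{p'}(w,\,\rn)}^{p'}\ls|\ell(f)|$, and one must prove the pointwise bound $A(f)(x)\ls M_w\big([A(g_R)]^{p'-2}\big)(x)\,A(g_R)(x)$ and invoke boundedness of $M_w$ on $L^{p/(2-p)}(w,\,\rn)$, which handles $1<p<2$; the case $p>2$ then follows by a reflexivity argument. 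None of this is visible in your outline, and the embedding $\{f\in T^p(w,\,\rn):\ \supp f\subset K\}\hookrightarrow L^2(K,\,w\,dy\,dt/t)$ for $p<2$ actually needs a change-of-aperture estimate, not just doubling. Second, and more seriously, your proposed shortcut via $T^2$ plus complex interpolation is circular: Lemma \ref{lem interpolation tent} together with $T^2(w,\,\rn)\cong L^2(\rnn,\,w\,dy\,dt/t)$ does not identify $(T^p(w,\,\rn))^\ast$, because passing from $[X_0,\,X_1]_\theta$ to $[X_0^\ast,\,X_1^\ast]_\theta$ requires already knowing $X_0^\ast$ and $X_1^\ast$, which for $X_i=T^{p_i}(w,\,\rn)$ with $p_i\neq2$ is precisely the statement to be proved. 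That route would need an independently established endpoint duality (for instance $T^1$--$T^\infty$ via weighted Carleson measures), which your proposal does not supply.
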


\section{Square function characterizations of $H_{L_w}^p(\rn)$}\label{s3}
\hskip\parindent
In this section, we prove the square function characterizations of
$H_{L_w}^p(\rn)$ and we mainly follow the strategy used in \cite{HMM11}.
To this end, we first establish some technical lemmas.

Let $w\in A_2(\rn)$, $\oz:=\arctan(\Lambda/\lambda)$, $\mu\in(\omega,\,\pi/2)$
and $\psi\in\Psi(\Sigma_\mu^0)$. For any $f\in L^2(w,\,\rn)$
and $(x,t)\in\rr_+^{n+1}$, define
\begin{equation*}
Q_{\psi,L_w}(f)(x,t):=\psi(t^2L_w)(f)(x).
\end{equation*}
By \cite[Theorem F]{ADM96} and a simple calculation, we see that
$Q_{\psi,L_w}$ is bounded from $L^2(w,\,\rn)$ to $T^2(w,\,\rn)$.
For any $\psi\in\Psi(\Sigma_\mu^0)$, $F\in T^2(w,\,\rn)$ and $x\in\rn$, let
\begin{equation*}
\pi_{\psi,L_w}(F)(x):=\int_0^\fz \psi(t^2L_w)(F(\cdot,t))(x)\,\frac{dt}{t}.
\end{equation*}
Since $L^\ast_w$ is the adjoint operator of $L_w$ in $L^2(w,\,\rn)$,
we see that,
for any $f\in L^2(w,\,\rn)$ and $G\in T^2(w,\,\rn)$,
\begin{eqnarray}\label{eq 2-0}
\langle Q_{\psi, L^\ast_w}(f),\,G\rangle
&&=\int_0^\fz\int_\rn  Q_{\psi, L^\ast_w}(f)(x,t)\ov{G(x,t)}w(x)\,dx\,\frac{dt}{t}\\
&&=\int_0^\fz\int_\rn \psi(t^2L^\ast_w)(f)(x)\ov{G(x,t)}w(x)\,dx\,\frac{dt}{t}\noz\\
&&=\int_0^\fz\int_\rn f(x)\ov{\psi(t^2L_w)(G(\cdot,t))(x)}w(x)\,dx\,\frac{dt}{t}\noz\\
&&=\int_\rn f(x)\ov{\pi_{\psi,L_w}(G)(x)}w(x)\,dx=:(f,\,\pi_{\psi,L_w}(G))_w.\noz
\end{eqnarray}
This, together with the fact that
$Q_{\psi,L^\ast_w}$ is bounded from $L^2(w,\,\rn)$ to $T^2(w,\,\rn)$,
implies that $\pi_{\psi,L_w}$ is the adjoint operator of $Q_{\psi,L^\ast_w}$ and
bounded from $T^2(w,\,\rn)$ to $L^2(w,\,\rn)$.

For any $\psi,\,\wz\psi\in\Psi(\Sigma_\mu^0)$, $f\in H_\fz(\Sigma_\mu^0)$,
$F\in T^2(w,\,\rn)$ and $(x,t)\in\rr_+^n$,
define
\begin{eqnarray*}
Q^f(F)(x,t)
&&:=Q_{\psi,L_w}\circ f\circ \psi_{\wz\psi,L_w}(F)(x,t)\\
&&:=\int_0^\fz \psi(t^2L_w)\lf(f(L_w)\wz\psi(s^2L_w)(F(\cdot,s))\r)(x)\,\frac{ds}{s}.
\end{eqnarray*}
From the above argument and the fact that $L_w$ has a bounded holomorphic functional calculus,
it follows that $Q^f$ is bounded from $T^2(w,\,\rn)$ to itself.
Moreover, by Lemmas \ref{lem interpolation tent} and \ref{lem dual tent},
we have the following conclusion, which is an analogue of \cite[Proposition 4.4]{HMM11}.

\begin{lem}\label{lem Qfunction}
Let $w\in A_2(\rn)$, $\omega:=\arctan(\Lambda/\lambda)$ and $\mu\in(\omega,\,\pi/2)$.
Then, for any $\psi,\,\wz\psi\in\Psi(\Sigma_\mu^0)$ and $f\in H_\fz(\Sigma_\mu^0)$,
the operator $Q^f:=Q_{\psi,L_w}\circ f\circ \psi_{\wz\psi,L_w}$
is bounded from $T^p(w,\,\rn)$ to itself if

{\rm (i)} $p\in(0,\,2]$, $\psi\in\Psi_{\az,\,\bz}(\Sigma_\mu^0)$
and $\wz\psi\in\Psi_{\bz,\,\az}(\Sigma_\mu^0)$, or

{\rm (ii)} $p\in(2,\,\fz)$, $\psi\in\Psi_{\bz,\,\az}(\Sigma_\mu^0)$
and $\wz\psi\in\Psi_{\az,\,\bz}(\Sigma_\mu^0)$,\\
where $\az\in(0,\,\fz)$ and $\bz\in(n(\max\{\frac{1}{p},\,1\}-\frac{1}{2}),\,\fz)$.
\end{lem}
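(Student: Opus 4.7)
The plan is to establish $T^p(w,\rn)$ boundedness of $Q^f$ in stages, following the template of \cite[Proposition 4.4]{HMM11} adapted to the weighted setting. The endpoint $p=2$ is immediate from the discussion preceding the statement: $Q_{\psi,L_w}$ and $\pi_{\wz\psi,L_w}$ are bounded between $L^2(w,\rn)$ and $T^2(w,\rn)$, while $f(L_w)$ is bounded on $L^2(w,\rn)$ by the bounded holomorphic functional calculus of $L_w$; composing the three yields boundedness of $Q^f$ on $T^2(w,\rn)$.

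For case (i) with $p\in(0,1]$, I invoke the tent-space atomic decomposition in Lemma \ref{lem tent-decompositon}: it suffices to prove $\|Q^f(a)\|_{T^p(w,\rn)}\ls 1$ uniformly in $(w,p,2)$-atoms $a$ supported in $\wh B$ for some ball $B:=B(x_B,r_B)$. I decompose $\rn=B\cup\bigcup_{j\geq 1}U_j(B)$ and handle each annular contribution separately. On $U_0(B)=B$, H\"older's inequality combined with the $T^2$ bound on $Q^f$ and the normalization \eqref{eq tent atom} of the atom controls $\|A(Q^f(a))\|_{L^p(w,B)}$ by a constant. For $j\geq 1$, I apply Lemma \ref{lem 2.0} with $\sigma_1=\az,\tau_1=\bz$ (for $\psi\in\Psi_{\az,\,\bz}$) and $\sigma_2=\bz,\tau_2=\az$ (for $\wz\psi\in\Psi_{\bz,\,\az}$) to factor
\begin{equation*}
\psi(t^2L_w)\circ f(L_w)\circ\wz\psi(s^2L_w)=\min\lf\{(s/t)^a,\,(t/s)^b\r\}T_{s^2,t^2},
\end{equation*}
with $a\in(0,\az)$ and $b\in(0,\bz)$, where $T_{s^2,t^2}$ enjoys $L^2(w)$ off-diagonal estimates of order $\bz+a$ or $\az+b$ (according to whether $s\le t$ or $s\ge t$) relative to the distance $d(B,U_j(B))\sim 2^jr_B$. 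Integrating the min-factor against $ds/s$ over $(0,r_B]$, passing to the $L^2(w)$-norm of $A(Q^f(a))$ via a standard Fubini argument on cones, and then applying H\"older with exponents $2/p$ and $2/(2-p)$, I obtain an estimate of the form
\begin{equation*}
\|A(Q^f(a))\|_{L^p(w,U_j(B))}\ls\lf(\frac{w(2^jB)}{w(B)}\r)^{1/p-1/2}2^{-j\eta_\bz},
\end{equation*}
for some positive exponent $\eta_\bz$ essentially of order $\bz$. By Lemma \ref{lem Ap-2}, the ratio $w(2^jB)/w(B)$ is controlled by a power of $2^{jn}$ depending on the $A_\fz$ class of $w$; the hypothesis $\bz>n(1/p-1/2)$, after a suitable choice of $a$ and $b$, renders the resulting geometric series in $j$ convergent, completing the range $p\in(0,1]$.

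For case (i) with $p\in(1,2)$, I use the complex-interpolation property of tent spaces in Lemma \ref{lem interpolation tent}: since $[T^1(w,\rn),T^2(w,\rn)]_\theta=T^p(w,\rn)$ for an appropriate $\theta\in(0,1)$, and $Q^f$ is bounded on $T^1(w,\rn)$ (by the atomic argument above with $\bz>n/2$) as well as on $T^2(w,\rn)$, it is bounded on $T^p(w,\rn)$. Case (ii) with $p\in(2,\fz)$ is then handled by duality: Lemma \ref{lem dual tent} identifies $T^p(w,\rn)$ as the dual of $T^{p'}(w,\rn)$ under the natural pairing, and a computation analogous to \eqref{eq 2-0} shows that the adjoint of $Q^f$ equals $Q_{\wz\psi,L_w^\ast}\circ\ov f(L_w^\ast)\circ\pi_{\psi,L_w^\ast}$ on $T^{p'}(w,\rn)$, with the roles of $\psi$ and $\wz\psi$ interchanged. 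Since $p'\in(1,2)$ and the hypothesis $\bz>n/2=n(\max\{1/p',1\}-1/2)$ matches the condition of case (i) applied to $L_w^\ast$ (which satisfies the same structural hypotheses as $L_w$), the previously established case (i) for $p'\in(1,2)$ yields the bound for $Q^f$ on $T^p(w,\rn)$.

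The main obstacle will be executing the $j\geq 1$ estimate in the atomic argument rigorously: tracking the interplay of the three scales $s$, $t$ and $2^jr_B$ when integrating the two-parameter min-factor, passing from pointwise-in-$t$ $L^2(w)$ bounds to an $L^2(w)$ bound for the square function $A$ on the annular region $U_j(B)$, and finally balancing the polynomial off-diagonal decay from Lemma \ref{lem 2.0} against the polynomial growth of $w(2^jB)/w(B)$ supplied by Lemma \ref{lem Ap-2}. The threshold $\bz>n(\max\{1/p,1\}-1/2)$ is precisely the critical exponent for convergence of the ensuing series, dictated by the interplay among the decay exponent $\bz$, the doubling dimension of the weight, and the H\"older conjugation $1/p-1/2$.
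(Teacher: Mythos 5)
Your proposal follows essentially the same route as the paper: $T^2(w,\rn)$ boundedness via the bounded $H_\fz$ functional calculus, atomic decomposition (Lemma \ref{lem tent-decompositon}) for $p\in(0,1]$ combined with the two-parameter off-diagonal estimates of Lemma \ref{lem 2.0} and the weight-growth bound of Lemma \ref{lem Ap-2}, complex interpolation (Lemma \ref{lem interpolation tent}) for $p\in(1,2]$, and tent-space duality (Lemma \ref{lem dual tent}) for $p\in(2,\fz)$, with the threshold $\bz>n(\max\{\frac1p,1\}-\frac12)$ arising exactly as in the paper from balancing the off-diagonal decay $2^{-2Mj}$ (with $M$ chosen in $(n(\frac1p-\frac12),\min\{\az+b,\bz+a\})$) against the $A_2$-weight growth $2^{2nj(\frac1p-\frac12)}$. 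The one point of divergence is that you slice the cone-vertex integral over the annuli $U_j(B)\subset\rn$, whereas the paper truncates $Q^f(a)$ onto the tent-shells $S_j(\wh{B}):=\wh{2^jB}\setminus\wh{2^{j-1}B}$ of $\rr^{n+1}_+$; the latter is marginally cleaner because it automatically confines $(y,t)$ to $t\le 2^jr_B$ and the cone vertices to $2^{j+1}B$, avoiding the Fubini over the irregular region $R(U_j(B))$ and the attendant case analysis of whether the spatial off-diagonal decay or the temporal factor $(s/t)^a$ is operative, but your decomposition is valid and produces the same estimates.
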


\begin{proof}
We first prove Lemma \ref{lem Qfunction} in case $p\in(0,\,1]$.
By Lemma \ref{lem 2.0}, we see that, for any $a\in(0,\,\az)$, $b\in(0,\,\beta)$,
$F\in T^p(w,\,\rn)$ and $(x,s)\in\mathbb{R}^{n+1}_+$,
\begin{eqnarray}\label{eq 2.xx}
Q^f(F)(x,s)=\int_0^\fz\min\lf\{\lf(\frac{s}{t}\r)^{2a},\,\lf(\frac{t}{s}\r)^{2b}\r\}
T_{s^2,t^2}(F(\cdot,t))(x)\,\frac{dt}{t},
\end{eqnarray}
where the family $\{T_{s,t}\}_{s,t>0}$ of sublinear operators has the following
properties:

(i) For any $t\in[s,\,\fz)$, any closed sets $E$ and $F$ of
$\rn$ and $g\in L^2(w,\,\rn)$ with $\supp g\st E$,
\begin{eqnarray}\label{eq 2.1}
\|T_{s,t}(g)\|_{L^2(w,\,F)}\ls \|f\|_{L^\fz(\Sigma_\mu^0)}
\lf[\min\lf\{1,\,\frac{t}{[d(E,\,F)]^2}\r\}\r]^{\beta+a}\|g\|_{L^2(w,\,E)}.
\end{eqnarray}

(ii) For any $s\in[t,\,\fz)$, any closed sets $E$ and $F$ of
$\rn$ and $g\in L^2(w,\,\rn)$ with $\supp g\st E$,
\begin{eqnarray}\label{eq 2.2}
\|T_{s,t}(g)\|_{L^2(w,\,F)}\ls \|f\|_{L^\fz(\Sigma_\mu^0)}
\lf[\min\lf\{1,\,\frac{s}{[d(E,\,F)]^2}\r\}\r]^{\az+b}\|g\|_{L^2(w,\,E)}.
\end{eqnarray}
Fix $b\in(n(\max\{\frac{1}{p},\,1\}-\frac12),\,\fz)$ and
choose a constant
$$M\in(n[\max\{{1}/{p},\,1\}-1/2],\,\min\{\az+b,\,\bz+a\}).$$
Then, from \eqref{eq 2.1} and \eqref{eq 2.2}, it follows that,
for any $s,\,t>0$, any closed subsets $E$ and $F$ of $\rn$
and $g\in L^2(w,\,\rn)$ with $\supp g\st E$,
\begin{eqnarray}\label{eq 2.4}
\|T_{s^2,t^2}(g)\|_{L^2(w,\,F)}\ls \|f\|_{L^\fz(\Sigma_\mu^0)}
\lf[\min\lf\{1,\,\frac{\max\{s^2,\,t^2\}}{[d(E,\,F)]^2}\r\}\r]^{M}\|g\|_{L^2(w,\,E)}.
\end{eqnarray}

It is easy to see that $T^2(w,\,\rn)\cap T^p(w,\,\rn)$ is dense in $T^p(w,\,\rn)$
(see the proof of \cite[Proposition 3.25]{HMM11}).
By this, we claim that,
to prove Lemma \ref{lem Qfunction} in case $p\in(0,\,1]$, it suffices to prove that,
for any $(w,\,p,\,2)$-atom $A$,
\begin{eqnarray}\label{eq 2.3}
\lf\|Q^f(A)\r\|_{T^p(w,\,\rn)}\ls 1.
\end{eqnarray}
Indeed, from Lemma \ref{lem tent-decompositon}, we deduce that,
for any $F\in T^2(w,\,\rn)\cap T^p(w,\,\rn)$, there exist a sequence of $(w,\,p,\,2)$-atoms, $\{A_j\}_{j\in\nn}$, and $\{\lz_j\}_{j\in\nn}\st\cc$ such that
\begin{eqnarray}\label{eq 2.3x}
F=\sum_{j\in\nn}\lz_j A_j\ \ \text{in}\ \ T^2(w,\,\rn)\cap T^p(w,\,\rn)
\end{eqnarray}
and
\begin{eqnarray}\label{eq 2.3y}
\lf[\sum_{j\in\nn}|\lz_j|^p\r]^{1/p}\sim \|F\|_{L^p(w,\,\rn)}.
\end{eqnarray}
For any $N\in\nn$, let $S_N:=\sum_{j=1}^N \lz_j A_j$.
By \eqref{eq 2.3x} and the fact that $Q^f$ is bounded on $T^2(w,\,\rn)$,
we know that there exists a subsequence of $\{S_N\}_{N\in\nn}$ (without loss of generality,
we use the same notation as the original sequence) such that, for almost every
$(y,\,t)\in \rr_+^{n+1}$, $\lim_{N\to\fz}Q^f(S_N)(y,\,t)=Q^f(F)(y,\,t)$.
From this, \eqref{eq 2.3} and \eqref{eq 2.3y}, it follows that
\begin{eqnarray*}
\lf\|Q^f(F)\r\|_{T^p(w,\,\rn)}
\le\lf[\sum_{j=1}^\fz |\lz_j|^p\lf\|Q^f(A_j)\r\|^p_{T^p(w,\,\rn)}\r]^{\frac1p}
\ls \lf[\sum_{j=1}^\fz |\lz_j|^p\r]^{\frac1p}
\sim \|F\|_{T^p(w,\,\rn)},
\end{eqnarray*}
which is the desired conclusion.

Next, we prove \eqref{eq 2.3}. For any $(w,\,p,\,2)$-atom $A$, there
exists a ball $B:=B(x_B,r_B)$ of $\rn$, with $x_B\in\rn$ and $r_B\in(0,\,\fz)$,
such that $\supp A\st \wh{B}$.
Let $S_1(\wh{B}):=\wh{2B}$ and
$S_j(\wh{B}):=\wh{2^j B}\setminus\wh{2^{j-1}B}$ for $j\in\nn\cap [2,\,\fz)$.
It is easy to see that $\mathbb{R}^{n+1}_+=\cup_{j=1}^\fz S_j(\wh{B})$.
Next, for $j\in\nn$, we estimate $\|\chi_{S_j(\wh{B})}Q^f(A)\|_{T^p(w,\,\rn)}$.

When $j=1$, it is easy to see that, for any $(y,\,t)\in\wh{2B}$,
$\{x\in\rn:\ |x-y|<t\}\st 3B$.
From this, the fact that $p\in(0,\,1]$,
the H\"{o}lder inequality, Lemma \ref{lem Ap-2},
the fact that $Q^f$ is bounded on $T^2(w,\,\rn)$
and \eqref{eq tent atom}, it follows that
\begin{eqnarray*}
\lf\|\chi_{\wh{2B}}Q^f(A)\r\|_{T^p(w,\,\rn)}
&&=\lf\{\int_{3B}\lf[\iint_{\Gamma(x)}\lf|\lf(\chi_{\wh{2B}}Q^f(A)\r)(y,t)\r|^2
\frac{w(y)\,dy\,dt}{w(B(x,t))\,t}\r]^{\frac{p}{2}}w(x)\,dx\r\}^{\frac{1}{p}}\\
&&\le \lf\|\chi_{\wh{2B}}Q^f(A)\r\|_{T^2(w,\,\rn)}\lf[w(3B)\r]^{\frac1p-\frac12}\\
&&\ls \|A\|_{T^2(w,\,\rn)}[w(B)]^{\frac1p-\frac12}\ls 1.
\end{eqnarray*}

When $j\in\nn\cap[2,\,\fz)$, it is easy to see that, for any $(y,\,t)\in S_j(\wh{B})$,
$\{x\in\rn:\ |x-y|<t\}\st 2^{j+1}B$. By this, the fact that $p\in(0,\,1]$,
the H\"{o}lder inequality, the Fubini theorem and Lemma \ref{lem Ap-2},
we conclude that
\begin{eqnarray}\label{eq 2.y1}
\qquad&&\lf\|\chi_{S_j(\wh{B})}Q^f(A)\r\|_{T^p(w,\,\rn)}\\
&&\hs=\lf\{\int_{2^{j+1}B}\lf[\iint_{\Gamma(x)}\lf|\lf(\chi_{S_j(\wh{B})}Q^f(A)\r)(y,s)\r|^2
\frac{w(y)\,dy\,ds}{w(B(x,s))\,s}\r]^{\frac{p}{2}}w(x)\,dx\r\}^{\frac{1}{p}}\noz\\
&&\hs\le\lf\{\int_{2^{j+1}B}\iint_{\Gamma(x)}\lf|\lf(\chi_{S_j(\wh{B})}Q^f(A)\r)(y,s)\r|^2
\frac{w(y)\,dy\,ds}{w(B(x,s))\,s}w(x)\,dx\r\}^{\frac{1}{2}}\lf[w(2^{j+1}B)\r]^{\frac1p-\frac12}\noz\\
&&\hs\ls\lf\{\lf[\int_0^{2^{j-1}r_B}\int_\rn
\lf|\lf(\chi_{S_j(\wh{B})}Q^f(A)\r)(y,s)\r|^2w(y)\,dy\,\frac{ds}{s}\r]^{\frac12}
+\lf[\int_{2^{j-1}r_B}^{2^jr_B}\int_\rn\cdots\r]^{\frac12}\r\}\noz\\
&&\hs\hs\times2^{2nj(\frac1p-\frac12)}[w(B)]^{\frac1p-\frac12}\noz\\
&&\hs =:\{{\rm I}+{\rm II}\}2^{2nj(\frac1p-\frac12)}[w(B)]^{\frac1p-\frac12}.\noz
\end{eqnarray}
For ${\rm II}$, from \eqref{eq 2.xx}, \eqref{eq 2.4},
the Minkowski inequality and the H\"{o}lder inequality, we deduce that
\begin{eqnarray}\label{eq 2.y2}
\qquad{\rm II}
&&=\lf[\int_{2^{j-1}r_B}^{2^jr_B}\int_\rn\chi_{S_j(\wh{B})}(x,s)
\lf|\int_0^{r_B}\lf(\frac{t}{s}\r)^{2b}T_{s^2,t^2}(A(\cdot,t))(x)\,\frac{dt}{t}\r|^2
w(x)\,dx\,\frac{ds}{s}\r]^{\frac12}\\
&&\le \int_0^{r_B}\lf(\frac{t}{2^jr_B}\r)^{2b}\lf[\int_{2^{j-1}r_B}^{2^jr_B}\int_\rn
\chi_{S_j(\wh{B})}(x,s)
\lf|T_{s^2,t^2}(A(\cdot,t))(x)\r|^2w(x)\,dx\,\frac{ds}{s}\r]^{\frac12}\,\frac{dt}{t}\noz\\
&&\ls\int_0^{r_B}\lf(\frac{t}{2^jr_B}\r)^{2b}\lf\|A(\cdot,t)\r\|_{L^2(w,\,B)}\,\frac{dt}{t}\noz\\
&&\ls\lf\{\int_0^{r_B}\lf(\frac{t}{2^jr_B}\r)^{4b}\,\frac{dt}{t}\r\}^{\frac12}
\lf\{\int_0^{r_B}\int_B\lf|A(y,t)\r|^2w(y)\,dy\,\frac{dt}{t}\r\}^{\frac12}\noz\\
&&\ls[w(B)]^{\frac12-\frac1p}2^{-2bj}.\noz
\end{eqnarray}
For ${\rm I}$, by \eqref{eq 2.xx} and the Minkowski inequality, we have
\begin{eqnarray}\label{eq 2.y3}
{\rm I}
&&=\lf[\int_0^{2^{j-1}r_B}\int_\rn\chi_{S_j(\wh{B})}(x,s)\r.\\
&&\hs\times\lf.\lf|\int_0^{r_B}\psi(s^2L_w)f(L_w)\wz\psi(t^2L_w)(A(\cdot,t))(x)\,\frac{dt}{t}\r|^2
w(x)\,dx\,\frac{ds}{s}\r]^{\frac12}\noz\\
&&\le \int_0^{r_B}\lf[\int_0^t\int_\rn\lf(\frac{s}{t}\r)^{4a}
\chi_{S_j(\wh{B})}(x,s)
\lf|T_{s^2,t^2}(A(\cdot,t))(x)\r|^2w(x)\,dx\,\frac{ds}{s}\r]^{\frac12}\,\frac{dt}{t}\noz\\
&&\hs+\int_0^{r_B}\lf[\int_t^{2^{j-1}r_B}\int_\rn\lf(\frac{t}{s}\r)^{4b}
\cdots \r]^{\frac12}\,\frac{dt}{t}\noz\\
&&=:{\rm I}_1+{\rm I}_2.\noz
\end{eqnarray}
For ${\rm I}_1$, by \eqref{eq 2.4}, the H\"{o}lder inequality and \eqref{eq tent atom},
we see that
\begin{eqnarray}\label{eq 2.y4}
{\rm I}_1
&&\ls\int_0^{r_B}\lf\{\int_0^t\lf(\frac{s}{t}\r)^{4a}\lf[\frac{t^2}{(2^jr_B)^2}\r]^{2M}
\|A(\cdot,t)\|_{L^2(w,\,B)}^2\,\frac{ds}{s}\r\}^{\frac12}\,\frac{dt}{t}\\
&&\sim\frac{1}{(2^jr_B)^{2M}}\int_0^{r_B}\|A(\cdot,t)\|_{L^2(w,\,B)}t^{2M}\,\frac{dt}{t}\noz\\
&&\ls\frac{1}{(2^jr_B)^{2M}}\lf[\int_0^{r_B}\int_B|A(y,t)|^2w(y)\,dy\,\frac{dt}{t}\r]^{\frac12}
\lf[\int_0^{r_B}t^{4M-1}\,dt\r]^{\frac12}\noz\\
&&\ls 2^{-2Mj}[w(B)]^{\frac12-\frac1p}.\noz
\end{eqnarray}
Similarly, we see that
\begin{eqnarray}\label{eq 2.y5}
{\rm I}_2
&&\ls \int_0^{r_B}\lf[\lf(\frac{t}{2^jr_B}\r)^{2b}+\lf(\frac{t}{2^jr_B}\r)^{2M}\r]
\|A(\cdot,t)\|_{L^2(w,\,B)}\,\frac{dt}{t}\\
&&\ls\lf(2^{-2bj}+2^{-2Mj}\r)[w(B)]^{\frac12-\frac1p}.\noz
\end{eqnarray}
From \eqref{eq 2.y1}, \eqref{eq 2.y2}, \eqref{eq 2.y3}, \eqref{eq 2.y4}, \eqref{eq 2.y5}
and the fact that $M>n(\max\{\frac1p,\,1\}-\frac12)$,
we deduce that, for any $(w,\,p,\,2)$-atom $A$,
\begin{eqnarray*}
\lf\|Q^f(A)\r\|_{T^p(w,\,\rn)}\ls 1,
\end{eqnarray*}
which completes the proof of Lemma \ref{lem Qfunction} in case $p\in(0,\,1]$.
Since we already known that, for any $F\in T^2(w,\,\rn)$,
$$\lf\|Q^f(F)\r\|_{T^2(w,\,\rn)}\ls\|F\|_{T^2(w,\,\rn)},$$
then, by Lemma \ref{lem interpolation tent} and the well-known property of
interpolation spaces (see, for example, \cite[Theorem 4.1.2]{BL76}),
we find that, for any $p\in(1,\,2]$ and $F\in T^p(w,\,\rn)$,
$$\lf\|Q^f(F)\r\|_{T^p(w,\,\rn)}\ls\|F\|_{T^p(w,\,\rn)}.$$
This proves Lemma \ref{lem Qfunction} in case $p\in(1,\,2]$.

By the above argument and the duality,
it is easy to prove Lemma \ref{lem Qfunction} in case $p\in(2,\,\fz)$.
This finishes the proof of Lemma \ref{lem Qfunction}.
\end{proof}

We have the following Calder\'{o}n reproducing formula.
\begin{lem}\label{lem c-z}
Let $w\in A_2(\rn)$, $\omega:=\arctan(\Lambda/\lambda)$ and $\mu\in(\omega,\,\pi/2)$.
For any $\psi,\,\wz\psi\in\Psi(\Sigma_\mu^0)$ satisfying
$\int_0^\fz\psi(t^2)\wz\psi(t^2)\,\frac{dt}{t}=1$, and any $f\in L^2(w,\,\rn)$,
it holds true that
\begin{eqnarray*}
\pi_{\psi,L_w}\circ Q_{\wz\psi,L_w}(f)=\pi_{\wz\psi,L_w}\circ Q_{\psi,L_w}(f)=f\ \ \
\text{in}\ L^2(w,\,\rn).
\end{eqnarray*}
\end{lem}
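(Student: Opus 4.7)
The plan is to reduce both identities to the scalar-valued Calder\'on reproducing formula for $L_w$ and then to invoke the bounded $H^\infty$ functional calculus recalled in Subsection \ref{s2.1}. Unwinding the definitions of $Q_{\wz\psi,L_w}$ and $\pi_{\psi,L_w}$, for any $f\in L^2(w,\,\rn)$,
\[
\pi_{\psi,L_w}\circ Q_{\wz\psi,L_w}(f)=\int_0^\fz \psi(t^2L_w)\wz\psi(t^2L_w)(f)\,\frac{dt}{t},
\]
and the operators $\psi(t^2L_w)$ and $\wz\psi(t^2L_w)$, coming from the holomorphic functional calculus of $L_w$, commute. Writing $\vz:=\psi\wz\psi$ and choosing $\az_j,\,\bz_j\in(0,\,\fz)$ with $\psi\in\Psi_{\az_1,\bz_1}(\Sigma_\mu^0)$ and $\wz\psi\in\Psi_{\az_2,\bz_2}(\Sigma_\mu^0)$, the defining bound \eqref{eq 2.0} yields $\vz\in\Psi_{\az_1+\az_2,\bz_1+\bz_2}(\Sigma_\mu^0)\st\Psi(\Sigma_\mu^0)$. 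Both compositions in the statement therefore equal
\[
I(f):=\int_0^\fz \vz(t^2L_w)(f)\,\frac{dt}{t},
\]
so it suffices to prove $I(f)=f$ in $L^2(w,\,\rn)$.

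I would next establish $I(f)=f$ by a standard truncation and functional-calculus argument. For each $N\in\nn$ set
\[
\Phi_N(z):=\int_{1/N}^{N}\vz(t^2z)\,\frac{dt}{t},\quad z\in\Sigma_\mu^0.
\]
Applying \eqref{eq def1} and Fubini on the truncated range $[1/N,\,N]$ (legitimate because $(t,\,\zeta)\mapsto \vz(t^2\zeta)$ is jointly continuous with decay on $[1/N,\,N]\times\gamma$, by the $\Psi$-decay of $\vz$) produces $\int_{1/N}^{N}\vz(t^2L_w)(f)\,\frac{dt}{t}=\Phi_N(L_w)(f)$. The same $\Psi$-decay yields a bound for $|\Phi_N|$ on $\Sigma_\mu^0$ that is independent of $N$, since
\[
|\Phi_N(z)|\le \int_0^\fz \frac{|t^2z|^{\az_1+\az_2}}{1+|t^2z|^{\az_1+\az_2+\bz_1+\bz_2}}\,\frac{dt}{t}
\]
is finite and independent of $z\in\Sigma_\mu^0$. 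Moreover, for every fixed $z\in\Sigma_\mu^0$, the substitution $u=t^2z$ combined with a contour rotation (permitted by the holomorphy of $\vz$ on $\Sigma_\mu^0$ and its decay at $0$ and $\fz$) gives
\[
\lim_{N\to\fz}\Phi_N(z)=\int_0^\fz \vz(s^2)\,\frac{ds}{s}=1,
\]
by the normalization hypothesis. Applying McIntosh's convergence lemma to the bounded $H^\infty$ functional calculus of $L_w$ on $L^2(w,\,\rn)$ (see \cite[Theorem G]{ADM96} and Subsection \ref{s2.1}), the pointwise convergence $\Phi_N\to 1$ together with the uniform $H^\infty$ bound upgrades to $\Phi_N(L_w)(f)\to f$ in $L^2(w,\,\rn)$ as $N\to\fz$, which is exactly $I(f)=f$.

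The step that requires the most care is this last convergence: a priori the McIntosh limit yields $Pf$, where $P$ denotes the orthogonal projection of $L^2(w,\,\rn)$ onto $\ov{R(L_w)}$. I would remove this obstacle by checking directly that $L_w$ is one-to-one with dense range on $L^2(w,\,\rn)$. Injectivity: if $L_wf=0$, then the sesquilinear form $\mathfrak{a}(f,\,f)$ vanishes, and the coercivity \eqref{degenerate C2} forces $\nabla f=0$ a.e., hence $f=0$ in $\ch_0^1(w,\,\rn)$. Density of the range: since $L_w^\ast$ satisfies the same coercivity as $L_w$, the identical argument gives that the kernel of $L_w^\ast$ is trivial, so $\ov{R(L_w)}$ equals the orthogonal complement of this kernel, namely $L^2(w,\,\rn)$. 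With these two facts, $P$ is the identity, so $I(f)=f$ and both equalities of the lemma follow.
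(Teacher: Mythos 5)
Your argument is correct, and it is a close cousin of the paper's proof but more scrupulous at exactly the places where the paper cuts corners. Both proofs rest on the same two pillars: first, the scalar Calder\'on identity $\int_0^\fz\psi(t^2z)\wz\psi(t^2z)\,\frac{dt}{t}=1$ for every $z\in\Sigma_\mu^0$; second, the bounded $H_\fz$ functional calculus for $L_w$. The difference is in how one then promotes the scalar identity to an operator identity. The paper's proof writes $\pi_{\psi,L_w}\circ Q_{\wz\psi,L_w}=\int_0^\fz\int_\gz\psi(t^2z)\wz\psi(t^2z)(zI-L_w)^{-1}\,dz\,\frac{dt}{t}$, interchanges $\int_0^\fz$ and $\int_\gz$, and lands on $\frac{1}{2\pi i}\int_\gz(zI-L_w)^{-1}\,dz=I$; this last contour integral is not absolutely convergent, so the step is a formal shorthand for a limiting argument that is not spelled out. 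You instead form the compactly truncated $\Phi_N(z):=\int_{1/N}^{N}\vz(t^2z)\,\frac{dt}{t}$, where the Fubini interchange with the defining Cauchy integral for $\vz(t^2L_w)$ genuinely is legitimate (compact $t$-range, $\Psi$-decay in $\zeta$), verify a uniform $H_\fz$ bound plus pointwise convergence $\Phi_N\to1$, and then invoke McIntosh's convergence lemma. This route is cleaner and makes precisely explicit what the paper's last displayed equality is hiding.

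You also noticed, and correctly disposed of, a hypothesis the paper silently uses: the McIntosh convergence lemma a priori delivers the orthogonal projection onto $\ov{R(L_w)}$, so one must know $L_w$ is one-to-one with dense range before concluding $I(f)=f$. Your argument for injectivity is right (if $L_wf=0$ then $\mathfrak{a}(f,f)=0$, and \eqref{degenerate C2} gives $\nabla f=0$, so $f$ is constant; since every $A_2$ weight has infinite total mass, a constant in $L^2(w,\,\rn)$ must vanish); the same argument applied to $L_w^\ast$, whose matrix $A^\ast$ satisfies the same coercivity, shows $N(L_w^\ast)=\{0\}$, whence $\ov{R(L_w)}=N(L_w^\ast)^\perp=L^2(w,\,\rn)$. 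These facts are standard for operators of the form \eqref{Lw}, but they are not stated in the paper, and checking them is the right reflex.

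One small remark: the statement that $\int_0^\fz\vz(t^2z)\,\frac{dt}{t}$ is independent of $z\in\Sigma_\mu^0$ deserves the same word of justification in both proofs. Your invocation of a contour rotation (Cauchy's theorem applied to the holomorphic, decaying $\vz$ on a sector) is the usual way; the paper calls it ``a simple calculation.'' Both are acceptable, though your version at least names the mechanism.
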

\begin{proof}
By a simple calculation, we see that, for any $z\in\Sigma_\mu^0$,
$$\int_0^\fz\psi(t^2z)\wz\psi(t^2z)\,\frac{dt}{t}
=\int_0^\fz\psi(t^2)\wz\psi(t^2)\,\frac{dt}{t}=1.$$
By \eqref{eq def1} and the properties of holomorphic functional calculi
(see \cite[Lecture 2]{ADM96}), we see that
\begin{eqnarray*}
\psi(t^2L_w)\wz\psi(t^2L_w)=\int_\gz\psi(t^2z)\wz\psi(t^2z)(zI-L_w)^{-1}\,dz,
\end{eqnarray*}
where $\gamma:=\{re^{i\nu}:\ r\in(0,\,\fz)\}\cup\{re^{-i\nu}:\ r\in(0,\,\fz)\}$
and $\nu\in(\omega,\,\mu)$.
Hence,
\begin{eqnarray*}
\pi_{\psi,L_w}\circ Q_{\wz\psi,L_w}
&&=\int_0^\fz\psi(t^2L_w)\wz\psi(t^2L_w)\,\frac{dt}{t}\\
&&=\int_0^\fz\int_\gz\psi(t^2z)\wz\psi(t^2z)(zI-L_w)^{-1}\,dz\,\frac{dt}{t}\\
&&=\int_\gz\int_0^\fz\psi(t^2z)\wz\psi(t^2z)\,\frac{dt}{t}(zI-L_w)^{-1}\,dz
=\int_\gz(zI-L_w)^{-1}\,dz=I.
\end{eqnarray*}
By changing the roles of $\psi$ and $\wz\psi$, we obtain
$\pi_{\wz\psi,L_w}\circ Q_{\psi,L_w}=I$.
This finishes the proof of Lemma \ref{lem c-z}.
\end{proof}

For $w\in A_2(\rn)$, $\omega:=\arctan(\Lambda/\lambda)$
and $\mu\in(\omega,\,\pi/2)$, let
\begin{eqnarray*}
K(\Sigma_\mu^0):=\{f\in H_\fz(\Sigma_\mu^0):\ f(t)\equiv0\ \text{for all}\ t\in(0,\,\fz)\}.
\end{eqnarray*}
\begin{rem}\label{rem 1}
For any $\psi\in\Psi(\Sigma_\mu^0)\setminus K(\Sigma_\mu^0)$
and $z\in\Sigma_\mu^0$, by taking
$$\wz\psi(z):=\frac{2\ov{\psi(z)}}{\int_0^\fz |\psi(t)|^2\,\frac{dt}{t}},$$
we find that
$$\int_0^\fz\psi(t^2)\wz\psi(t^2)\,\frac{dt}{t}
=\frac{2\int_0^\fz|\psi(t^2)|^2\,\frac{dt}{t}}
{\int_0^\fz|\psi(t)|^2\,\frac{dt}{t}}=1.$$
Thus, $\psi$ and $\wz\psi$ satisfy the assumptions of Lemma \ref{lem c-z}.
\end{rem}

\begin{defn}\label{def square Hardy1}
Let $w\in A_2(\rn)$, $\omega:=\arctan(\Lambda/\lambda)$ and $\mu\in(\omega,\,\pi/2)$. Let

(i) $p\in (0,\,2]$ and $\psi\in \Psi_{\az,\beta}(\Sigma_\mu^0)$, or

(ii) $p\in(2,\,\fz)$ and $\psi\in \Psi_{\bz,\az}(\Sigma_\mu^0)$,\\
where $\az\in(0,\,\fz)$ and $\bz\in(n[\max\{\frac{1}{p},\,1\}-\frac{1}{2}],\,\fz)$.
The Hardy space $H_{\psi,L_w}^p(\rn)$ is defined as the completion of the space
\begin{eqnarray*}
\{f\in L^2(w,\,\rn):\ Q_{\psi,L_w}(f)\in T^p(w,\,\rn)\}
\end{eqnarray*}
with respect to the (quasi-)norm
\begin{eqnarray*}
\|f\|_{H^p_{\psi,L_w}(\rn)}:=\|Q_{\psi,L_w}(f)\|_{T^p(w,\,\rn)}.
\end{eqnarray*}
\end{defn}

\begin{lem}\label{lem element}
Let $w\in A_2(\rn)$, $\omega:=\arctan(\Lambda/\lambda)$ and $\mu\in (\omega,\,\pi/2)$.

{\rm (i)} Let $p\in(0,\,2]$ and $\psi,\,\psi_0\in\Psi_{\az,\beta}(\Sigma_\mu^0)$, or
$p\in(2,\,\fz)$ and $\psi,\,\psi_0\in\Psi_{\beta,\az}(\Sigma_\mu^0)$,
where $\az\in(0,\,\fz)$, $\bz\in(n[\max\{\frac{1}{p},\,1\}-\frac{1}{2}],\,\fz)$
and $\psi,\,\psi_0\notin K(\Sigma_\mu^0)$.
Then there exists a positive constant $C$ such that, for any $f\in H_{\psi_0,L_w}^p(\rn)$,
\begin{eqnarray}\label{eq element1}
\|Q_{\psi,L_w}(f)\|_{T^p(w,\,\rn)}\le C\|f\|_{H_{\psi_0,L_w}^p(\rn)}.
\end{eqnarray}

{\rm (ii)} Let $p\in(0,\,2]$, $\psi\in\Psi_{\beta,\az}(\Sigma_\mu^0)$
and $\psi_0\in\Psi_{\az,\beta}(\Sigma_\mu^0)$, or
$p\in(2,\,\fz)$, $\psi\in\Psi_{\az,\beta}(\Sigma_\mu^0)$
and $\psi_0\in\Psi_{\beta,\az}(\Sigma_\mu^0)$,
where $\az\in(0,\,\fz)$, $\bz\in(n[\max\{\frac{1}{p},\,1\}-\frac{1}{2}],\,\fz)$
and $\psi,\,\psi_0\notin K(\Sigma_\mu^0)$.
Then there exists a positive constant $C$ such that, for any $f\in T^p(w,\,\rn)$,
\begin{eqnarray}\label{eq element2}
\|\pi_{\psi,L_w}(f)\|_{H^p_{\psi_0,L_w}(\rn)}\le C\|f\|_{T^p(w,\,\rn)}.
\end{eqnarray}
\end{lem}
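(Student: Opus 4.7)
The plan is to reduce both parts of Lemma \ref{lem element} to Lemma \ref{lem Qfunction} applied to the constant function $1 \in H_\fz(\Sigma_\mu^0)$, combined with the Calder\'on reproducing formula of Lemma \ref{lem c-z} for part (i). The key observation is that, for the constant symbol $1$, the operator $Q^1$ of Lemma \ref{lem Qfunction} coincides with the composition $Q_{\psi,\,L_w}\circ\pi_{\wz\psi,\,L_w}$ (by pulling $\psi(t^2L_w)$ inside the integral defining $\pi_{\wz\psi,\,L_w}$).

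For (i), I first choose an auxiliary symbol $\wz{\psi_0}$ that is paired with $\psi_0$ via Lemma \ref{lem c-z} but lies in the \emph{opposite} decay class. Concretely, when $p \in (0,\,2]$ and $\psi_0 \in \Psi_{\az,\bz}(\Sigma_\mu^0)$, I take $\wz{\psi_0} \in \Psi_{\bz,\az}(\Sigma_\mu^0) \setminus K(\Sigma_\mu^0)$ with $\int_0^\fz \psi_0(t^2)\wz{\psi_0}(t^2)\,\frac{dt}{t} = 1$, e.g., a suitable constant multiple of $z \mapsto z^\bz(1+z)^{-\az-\bz}$ after checking that its pairing with $\psi_0$ is nonzero; the case $p \in (2,\,\fz)$ is symmetric with $\az$ and $\bz$ swapped. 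Applying Lemma \ref{lem c-z} to any $f \in L^2(w,\,\rn)$ gives $f = \pi_{\wz{\psi_0},\,L_w}\circ Q_{\psi_0,\,L_w}(f)$ in $L^2(w,\,\rn)$, so that
\begin{equation*}
Q_{\psi,\,L_w}(f) = Q_{\psi,\,L_w}\circ\pi_{\wz{\psi_0},\,L_w}\bigl(Q_{\psi_0,\,L_w}(f)\bigr).
\end{equation*}
The composition $Q_{\psi,\,L_w}\circ\pi_{\wz{\psi_0},\,L_w}$ is exactly the operator $Q^1$ with outer symbol $\psi$ and inner symbol $\wz{\psi_0}$; the class assumptions of Lemma \ref{lem Qfunction}(i) are met when $p \in (0,\,2]$ (since $\psi \in \Psi_{\az,\bz}$ and $\wz{\psi_0} \in \Psi_{\bz,\az}$), and those of Lemma \ref{lem Qfunction}(ii) when $p \in (2,\,\fz)$. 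Taking $T^p(w,\,\rn)$-norms and unravelling the definition of $\|\cdot\|_{H^p_{\psi_0,\,L_w}(\rn)}$ yields \eqref{eq element1} first on the dense subspace $L^2(w,\,\rn) \cap H^p_{\psi_0,\,L_w}(\rn)$, and then on all of $H^p_{\psi_0,\,L_w}(\rn)$ by continuous extension.

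For (ii) no Calder\'on identity is needed. By the definition of $H^p_{\psi_0,\,L_w}(\rn)$, to prove \eqref{eq element2} it suffices to bound $\|Q_{\psi_0,\,L_w}(\pi_{\psi,\,L_w}(f))\|_{T^p(w,\,\rn)}$ for $f \in T^p(w,\,\rn) \cap T^2(w,\,\rn)$. This composition is again $Q^1$, now with outer symbol $\psi_0$ and inner symbol $\psi$; the class hypotheses in (ii) of Lemma \ref{lem element} precisely fit Lemma \ref{lem Qfunction}(i) when $p \in (0,\,2]$ and Lemma \ref{lem Qfunction}(ii) when $p \in (2,\,\fz)$, so $T^p(w,\,\rn)$-boundedness is immediate and \eqref{eq element2} follows by density. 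The main technical obstacle throughout is the construction of $\wz{\psi_0}$ in part (i): one must verify, in the complex-valued setting, that there always exists $\wz{\psi_0}$ in the prescribed opposite class for which the Calder\'on normalizing integral is nonzero (and hence can be rescaled to $1$). Once this existence step is granted, the remainder is a direct assembly of Lemmas \ref{lem c-z} and \ref{lem Qfunction} together with the standard density argument built into the definition of the Hardy spaces $H^p_{\psi,\,L_w}(\rn)$.
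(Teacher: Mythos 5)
Your argument coincides with the paper's: factor via the Calder\'on reproducing formula (Lemma \ref{lem c-z}) and apply Lemma \ref{lem Qfunction} with $f\equiv1$ for part (i), and apply Lemma \ref{lem Qfunction} directly to $Q_{\psi_0,L_w}\circ\pi_{\psi,L_w}$ for part (ii), finishing with density. The opposite-class auxiliary symbol you worry about can always be taken to be $\wz\psi_0(z):=c\,z^{\bz-\az}\,\ov{\psi_0(\ov z)}\in\Psi_{\bz,\az}(\Sigma_\mu^0)$, whose pairing $\int_0^\fz\psi_0(t^2)\wz\psi_0(t^2)\,\frac{dt}{t}=c\int_0^\fz t^{2(\bz-\az)}|\psi_0(t^2)|^2\,\frac{dt}{t}$ is a finite positive number whenever $\psi_0\notin K(\Sigma_\mu^0)$; the fixed candidate $z^\bz(1+z)^{-\az-\bz}$ you suggest could pair to zero with a complex-valued $\psi_0$ and is best avoided.
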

\begin{proof}
We first prove {\rm (i)}.
If $p\in(0,\,2]$ and $\psi,\,\psi_0\in\Psi_{\az,\beta}(\Sigma_\mu^0)$,
choose a function $\wz\psi_0\in\Psi_{\beta,\az}(\Sigma_\mu^0)$ such that
$\int_0^\fz\wz\psi_0(t^2)\psi_0(t^2)\,\frac{dt}{t}=1$.
Hence, by Lemma \ref{lem c-z}, we know that, for any $f\in L^2(w,\,\rn)$,
$$f=\pi_{\wz\psi_0,L_w}\circ Q_{\psi_0,L_w}(f)\ \ \
\text{in}\ L^2(w,\,\rn).$$
From this and Lemma \ref{lem Qfunction}{\rm (i)},
it follows that, for any $f\in L^2(w,\,\rn)\cap H^p_{\psi_0,L_w}(\rn)$,
\begin{eqnarray*}
\|Q_{\psi,L_w}(f)\|_{T^p(w,\,\rn)}
&&\sim\|Q_{\psi,L_w}\circ\pi_{\wz\psi_0,L_w}\circ Q_{\psi_0,L_w}(f)\|_{T^p(w,\,\rn)}\\
&&\ls \|Q_{\psi_0,L_w}(f)\|_{T^p(w,\,\rn)}\sim \|f\|_{H^p_{\psi_0,L_w}(\rn)}.\noz
\end{eqnarray*}
Since $L^2(w,\,\rn)\cap H^p_{\psi_0,L_w}(\rn)$ is dense in $H^p_{\psi_0,L_w}(\rn)$,
we obtain \eqref{eq element1} in this case.
If $p\in(2,\,\fz)$ and $\psi,\,\psi_0\in\Psi_{\beta,\az}(\Sigma_\mu^0)$,
by Lemma \ref{lem Qfunction}{\rm (ii)} and an argument
similar to that used above, we find that
\eqref{eq element1} also holds true.

Next, we prove {\rm (ii)}.
If $p\in(0,\,2]$, $\psi\in\Psi_{\beta,\az}(\Sigma_\mu^0)$
and $\psi_0\in\Psi_{\az,\beta}(\Sigma_\mu^0)$, from
Lemma \ref{lem Qfunction}{\rm (i)}, we deduce that, for any $f\in T^p(w,\,\rn)\cap T^2(w,\,\rn)$,
\begin{eqnarray*}
\|\pi_{\psi,L_w}(f)\|_{H^p_{\psi_0,L_w}(\rn)}
=\|Q_{\psi_0,L_w}\circ\pi_{\psi,L_w}(f)\|_{T^p(w,\,\rn)}\ls\|f\|_{T^p(w,\,\rn)}.
\end{eqnarray*}
Since $T^p(w,\,\rn)\cap T^2(w,\,\rn)$ is dense in $T^p(w,\,\rn)$,
we obtain \eqref{eq element2} in this case.
If $p\in(2,\,\fz)$, $\psi\in\Psi_{\az,\beta}(\Sigma_\mu^0)$
and $\psi_0\in\Psi_{\beta,\az}(\Sigma_\mu^0)$, similarly,
by Lemma \ref{lem Qfunction}{\rm (ii)} and a density argument,
we find that \eqref{eq element2} also holds true in this case.
This finishes the proof of Lemma \ref{lem element}.
\end{proof}

\begin{prop}\label{pro 1}
Let $p\in(0,\,2]$, $w\in A_2(\rn)$, $\omega:=\arctan(\Lambda/\lambda)$ and $\mu\in (\omega,\,\pi/2)$.
For any $\psi\in\Psi_{\az,\beta}(\Sigma_\mu^0)\setminus K(\Sigma_\mu^0)$, where
$\az\in(0,\,\fz)$ and $\bz\in(n[\max\{\frac{1}{p},\,1\}-\frac{1}{2}],\,\fz)$,
$H^p_{L_w}(\rn)$ and $H^p_{\psi,L_w}(\rn)$ coincide with equivalent quasi-norms.
\end{prop}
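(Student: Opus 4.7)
The idea is to identify the square function $\cs_{L_w}$ with a particular instance of $Q_{\psi,L_w}$ and then use Lemma \ref{lem element} to switch $\psi$.

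Set $\psi_0(z):=ze^{-z}$, so that $\psi_0(t^2L_w)(f)(y)=t^2L_we^{-t^2L_w}(f)(y)=Q_{\psi_0,L_w}(f)(y,t)$. For $z\in\Sigma_\mu^0$ with $\mu\in(\omega,\pi/2)\subset(0,\pi/2)$, the bound $|e^{-z}|\le e^{-|z|\cos\mu}$ gives $|\psi_0(z)|\ls|z|e^{-|z|\cos\mu}\ls|z|/(1+|z|^{1+N})$ for every $N\in(0,\fz)$; hence $\psi_0\in\Psi_{1,N}(\Sigma_\mu^0)$ for every $N\in(0,\fz)$, and in particular $\psi_0\notin K(\Sigma_\mu^0)$. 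Comparing the definitions of $\cs_{L_w}(f)$ and of $A(\cdot)$, one reads off directly that $\cs_{L_w}(f)(x)=A(Q_{\psi_0,L_w}(f))(x)$ for every $f\in L^2(w,\rn)$ and $x\in\rn$. Therefore
$$
\|f\|_{H_{L_w}^p(\rn)}=\|\cs_{L_w}(f)\|_{L^p(w,\rn)}=\|Q_{\psi_0,L_w}(f)\|_{T^p(w,\rn)}=\|f\|_{H_{\psi_0,L_w}^p(\rn)},
$$
so the two pre-completion spaces coincide with equal quasi-norms, and hence $H_{L_w}^p(\rn)=H_{\psi_0,L_w}^p(\rn)$ with equal quasi-norms.

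It remains to show $H_{\psi,L_w}^p(\rn)=H_{\psi_0,L_w}^p(\rn)$ with equivalent quasi-norms. The two functions $\psi$ and $\psi_0$ may sit in different $\Psi_{\az,\bz}$ classes, so first bring them into a common one: set $\az':=\min\{\az,1\}\in(0,\fz)$ and $\bz':=\bz$. Using the elementary inclusion $\Psi_{a_1,b_1}(\Sigma_\mu^0)\st\Psi_{a_2,b_2}(\Sigma_\mu^0)$ whenever $a_2\le a_1$ and $b_2\le b_1$, together with $\psi_0\in\Psi_{1,N}(\Sigma_\mu^0)$ for arbitrarily large $N$, one obtains $\psi,\psi_0\in\Psi_{\az',\bz'}(\Sigma_\mu^0)\setminus K(\Sigma_\mu^0)$. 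The hypothesis $\bz'=\bz>n[\max\{1/p,1\}-1/2]$ is preserved. We are thus in the setting of Lemma \ref{lem element}(i), whose symmetric application (swapping the roles of $\psi$ and $\psi_0$) yields
$$
\|Q_{\psi,L_w}(f)\|_{T^p(w,\rn)}\sim\|Q_{\psi_0,L_w}(f)\|_{T^p(w,\rn)}
$$
for every $f\in L^2(w,\rn)$ for which either side is finite. Extending this quasi-norm equivalence from this common dense subspace to the completions yields $H_{\psi,L_w}^p(\rn)=H_{\psi_0,L_w}^p(\rn)=H_{L_w}^p(\rn)$ with equivalent quasi-norms.

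The whole argument is a bookkeeping reduction: the real analytic content already sits in the Calder\'on reproducing formula (Lemma \ref{lem c-z}) and in the $T^p(w,\rn)$-boundedness of $Q^f$ (Lemma \ref{lem Qfunction}), both of which drive Lemma \ref{lem element}. The only point requiring care is the parameter matching; the choice $\az'=\min\{\az,1\}$ keeps $\bz'$ unchanged so that the crucial lower bound $\bz'>n[\max\{1/p,1\}-1/2]$ is automatic, and no further obstacle appears.
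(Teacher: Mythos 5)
Your argument is correct and coincides, at the level of ideas, with the paper's own proof: identify $\cs_{L_w}$ with $A(Q_{\psi_0,L_w}(\cdot))$ for $\psi_0(z)=ze^{-z}$ (so that $H^p_{L_w}(\rn)$ and $H^p_{\psi_0,L_w}(\rn)$ agree with \emph{equal} quasi-norms), then transfer to the general $\psi$ via Lemma~\ref{lem element}. Two remarks. First, your parameter bookkeeping is in fact tighter than what is written in the paper: $\psi_0(z)=ze^{-z}$ vanishes only to order one at the origin, so $\psi_0\in\Psi_{\az,\bz}(\Sigma_\mu^0)$ requires $\az\le 1$; the paper simply asserts membership for $\az\in(0,\fz)$ and does not explicitly reconcile the class of $\psi_0$ with that of $\psi$ before invoking Lemma~\ref{lem element}(i). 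Your choice $\az'=\min\{\az,1\}$, together with the monotonicity $\Psi_{a_1,b_1}(\Sigma_\mu^0)\subset\Psi_{a_2,b_2}(\Sigma_\mu^0)$ for $a_2\le a_1$, $b_2\le b_1$, places $\psi$ and $\psi_0$ in one common class without altering $\bz$, so the crucial constraint $\bz>n[\max\{1/p,1\}-1/2]$ is preserved; this is exactly what is needed to legitimize the application of the lemma. Second, you apply Lemma~\ref{lem element}(i) symmetrically, once with $(\psi,\psi_0)$ and once with $(\psi_0,\psi)$, to obtain both inequalities at once; the paper uses the lemma only for the forward bound and then re-derives the reverse inequality in-line from Lemma~\ref{lem c-z} and Lemma~\ref{lem Qfunction}(i). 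That is the same content, just repackaged, so the two proofs are not genuinely different routes.
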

\begin{proof}
Take $\psi_0(z):=ze^{-z}$ for any $z\in\Sigma_\mu^0$. It is easy to show that
$\psi_0\in\Psi_{\az,\beta}(\Sigma_\mu^0)$
with $\az\in(0,\,\fz)$ and $\bz\in(n[\max\{\frac{1}{p},\,1\}-\frac{1}{2}],\,\fz)$.
Then, from Definition \ref{def Hardy space}
and \eqref{eq element1}, we deduce that, for any $f\in H^p_{L_w}(\rn)\cap L^2(w,\,\rn)$,
\begin{eqnarray*}
\|f\|_{H^p_{\psi,L_w}(\rn)}
=\|Q_{\psi,L_w}(f)\|_{T^p(w,\,\rn)}\ls\|f\|_{H^p_{\psi_0,L_w}(\rn)}
\sim\|f\|_{H^p_{L_w}(\rn)},
\end{eqnarray*}
which implies that
\begin{eqnarray}\label{eq 2-1}
\lf[H^p_{L_w}(\rn)\cap L^2(w,\,\rn)\r]\st
\lf[H^p_{\psi,L_w}(\rn)\cap L^2(w,\,\rn)\r].
\end{eqnarray}

Next, we prove the reverse inclusion. To this end, we only need to show that,
for any $f\in H^p_{\psi,L_w}(\rn)\cap L^2(w,\,\rn)$,
\begin{eqnarray*}
\|f\|_{H^p_{L_w}(\rn)}\ls\|f\|_{H^p_{\psi,L_w}(\rn)}.
\end{eqnarray*}

Choose a function $\wz\psi\in \Psi_{\beta,\az}(\Sigma_\mu^0)$ such that
$\int_0^\fz\wz\psi(t^2)\psi(t^2)\,\frac{dt}{t}=1$.
Then, by Lemma \ref{lem c-z}, we know that, for any $f\in L^2(w,\,\rn)$,
$$\pi_{\wz\psi,L_w}\circ Q_{\psi,L_w}(f)=f
\ \ \ \text{in}\ L^2(w,\,\rn).$$
This, together with Lemma \ref{lem Qfunction}{\rm (i)}, implies that
\begin{eqnarray*}
\|f\|_{H^p_{L_w}(\rn)}&&=\|Q_{\psi_0,L_w}\|_{T^p(w,\,\rn)}
\sim\|Q_{\psi_0,L_w}\circ \pi_{\wz\psi,L_w}\circ Q_{\psi,L_w}(f)\|_{T^p(w,\,\rn)}\\
&&\ls \|Q_{\psi,L_w}(f)\|_{T^p(w,\,\rn)}\sim \|f\|_{H^p_{\psi,L_w}(\rn)}.
\end{eqnarray*}
Therefore,
$$\lf[H^p_{\psi,L_w}(\rn)\cap L^2(w,\,\rn)\r]
\st \lf[H^p_{L_w}(\rn)\cap L^2(w,\,\rn)\r].$$
This, together with \eqref{eq 2-1} and a density argument,
then finishes the proof of Proposition \ref{pro 1}.
\end{proof}

\begin{prop}\label{pro 2}
Let $p\in(2,\,\fz)$, $w\in A_2(\rn)$, $\omega:=\arctan(\Lambda/\lambda)$
and $\mu\in (\omega,\,\pi/2)$.
For any $\psi\in\Psi_{\beta,\az}(\Sigma_\mu^0)\setminus K(\Sigma_\mu^0)$, where
$\az\in(0,\,\fz)$ and $\bz\in(n[\max\{\frac{1}{p},\,1\}-\frac{1}{2}],\,\fz)$,
$H^p_{L_w}(\rn)$ and $H^p_{\psi,L_w}(\rn)$ coincide with equivalent norms.
\end{prop}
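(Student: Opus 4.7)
The strategy is to reduce Proposition \ref{pro 2} to Proposition \ref{pro 1} via the duality $H^p_{L_w}(\rn)=(H^{p'}_{L^\ast_w}(\rn))^\ast$ combined with the tent space duality of Lemma \ref{lem dual tent}, namely $(T^{p'}(w,\rn))^\ast=T^p(w,\rn)$. The key numerical observation is that for $p\in(2,\fz)$ the admissible range $\bz>n(\max\{1/p,1\}-1/2)=n/2$ in Definition \ref{def square Hardy1}(ii) coincides with the range $\bz>n(\max\{1/p',1\}-1/2)=n/2$ required by Definition \ref{def square Hardy1}(i) at the dual exponent $p'\in(1,2)$; consequently any $\wz\psi\in\Psi_{\az,\bz}(\Sigma_\mu^0)$ obtained from our $\psi\in\Psi_{\bz,\az}(\Sigma_\mu^0)$ falls into the scope of Proposition \ref{pro 1} applied to $L^\ast_w$.

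First I would prove the bound $\|f\|_{H^p_{L_w}(\rn)}\ls\|f\|_{H^p_{\psi,L_w}(\rn)}$ on the dense subspace $L^2(w,\rn)\cap H^p_{\psi,L_w}(\rn)$. Choose $\wz\psi\in\Psi_{\az,\bz}(\Sigma_\mu^0)$ satisfying $\int_0^\fz\psi(t^2)\wz\psi(t^2)\,\frac{dt}{t}=1$ as in Remark \ref{rem 1}. For each $g\in L^2(w,\rn)\cap H^{p'}_{L^\ast_w}(\rn)$ with $\|g\|_{H^{p'}_{L^\ast_w}(\rn)}\le 1$, use Lemma \ref{lem c-z} to write $f=\pi_{\wz\psi,L_w}\circ Q_{\psi,L_w}(f)$ in $L^2(w,\rn)$, and then \eqref{eq 2-0} to rewrite $\langle f,g\rangle_w=\langle Q_{\psi,L_w}(f),\,Q_{\wz\psi,L^\ast_w}(g)\rangle$. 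Lemma \ref{lem dual tent} bounds this by $\|Q_{\psi,L_w}(f)\|_{T^p(w,\rn)}\|Q_{\wz\psi,L^\ast_w}(g)\|_{T^{p'}(w,\rn)}=\|f\|_{H^p_{\psi,L_w}(\rn)}\|g\|_{H^{p'}_{\wz\psi,L^\ast_w}(\rn)}$, and Proposition \ref{pro 1} applied to $L^\ast_w$ at exponent $p'\in(1,2)$ reduces the second factor to $\|g\|_{H^{p'}_{L^\ast_w}(\rn)}\le 1$. Taking the supremum over such $g$ yields the desired estimate.

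Next I would prove the reverse inequality $\|Q_{\psi,L_w}(f)\|_{T^p(w,\rn)}\ls\|f\|_{H^p_{L_w}(\rn)}$ by applying Lemma \ref{lem dual tent} in the other direction: the tent norm equals the supremum over $h\in T^{p'}(w,\rn)$ with $\|h\|_{T^{p'}(w,\rn)}\le 1$ of $|\langle Q_{\psi,L_w}(f),h\rangle|=|\langle f,\pi_{\psi,L^\ast_w}(h)\rangle_w|\le\|f\|_{H^p_{L_w}(\rn)}\|\pi_{\psi,L^\ast_w}(h)\|_{H^{p'}_{L^\ast_w}(\rn)}$, where again \eqref{eq 2-0} is used to move $\pi_{\psi,L^\ast_w}$ to the other side. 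Since $\psi\in\Psi_{\bz,\az}$ and $p'\in(0,2]$, Lemma \ref{lem element}(ii) applied to $L^\ast_w$ (with an auxiliary $\psi_0(z):=ze^{-z}\in\Psi_{\az,\bz}$) gives $\|\pi_{\psi,L^\ast_w}(h)\|_{H^{p'}_{\psi_0,L^\ast_w}(\rn)}\ls\|h\|_{T^{p'}(w,\rn)}$, and Proposition \ref{pro 1} identifies this with $\|\pi_{\psi,L^\ast_w}(h)\|_{H^{p'}_{L^\ast_w}(\rn)}$. Combining the two bounds and invoking density complete the norm equivalence.

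The main obstacle I anticipate is ensuring the identification of the abstract dual $H^p_{L_w}(\rn)=(H^{p'}_{L^\ast_w}(\rn))^\ast$ with concrete functions paired via $\langle\cdot,\cdot\rangle_w$, so that $L^2(w,\rn)\cap H^p_{L_w}(\rn)$ and $L^2(w,\rn)\cap H^p_{\psi,L_w}(\rn)$ genuinely agree as subspaces of $L^2(w,\rn)$ on which the two norms are compared; this compatibility is what lets the equivalence lift cleanly to the full completions. A subsidiary but essential technical point is the coincidence of the $\bz$-threshold $n/2$ across both the $p$ and $p'$ regimes, which is precisely what permits Proposition \ref{pro 1} to be invoked on $L^\ast_w$ with the same $\wz\psi$ chosen above.
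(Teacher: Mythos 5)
Your forward direction $\|f\|_{H^p_{L_w}(\rn)}\ls\|f\|_{H^p_{\psi,L_w}(\rn)}$ on $L^2(w,\rn)\cap H^p_{\psi,L_w}(\rn)$ matches the paper exactly, and your observation that the $\bz$-threshold $n/2$ is the same for $p$ and $p'$ is precisely the point that lets Proposition \ref{pro 1} be invoked on $L^\ast_w$.

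The reverse direction, however, has a genuine gap. What you prove is the norm inequality $\|Q_{\psi,L_w}(f)\|_{T^p(w,\rn)}\ls\|f\|_{H^p_{L_w}(\rn)}$ for those $f\in L^2(w,\rn)$ already known to define bounded functionals on $H^{p'}_{L^\ast_w}(\rn)$. Combined with the forward bound, this only shows the canonical map $\iota\colon H^p_{\psi,L_w}(\rn)\to (H^{p'}_{L^\ast_w}(\rn))^\ast$ is an isomorphism \emph{onto its (closed) range}; it does not show the range is all of the dual space. Since $H^p_{L_w}(\rn)$ for $p>2$ is \emph{defined} as the abstract dual $(H^{p'}_{L^\ast_w}(\rn))^\ast$ rather than as a completion of $L^2$ functions, the ``invoking density'' step at the end is circular: the density of $L^2(w,\rn)\cap H^p_{L_w}(\rn)$ in $H^p_{L_w}(\rn)$ is not available a priori --- it is essentially equivalent to the surjectivity that needs to be established. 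You flag this as ``the main obstacle'', but the proposal does not resolve it.

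The paper fills this gap constructively. Given an arbitrary $l\in (H^{p'}_{L^\ast_w}(\rn))^\ast$, it sets $\langle l, F\rangle := l(\pi_{\psi,L^\ast_w}(F))$ for $F\in T^{p'}(w,\rn)$ (using Lemma \ref{lem element} and Proposition \ref{pro 1} to ensure $\pi_{\psi,L^\ast_w}(F)\in H^{p'}_{L^\ast_w}(\rn)$), invokes the tent-space duality $(T^{p'}(w,\rn))^\ast=T^p(w,\rn)$ of Lemma \ref{lem dual tent} to produce $G_l\in T^p(w,\rn)$ with $\|G_l\|_{T^p(w,\,\rn)}\sim\|l\|$, and then checks via the Calder\'on reproducing formula (Lemma \ref{lem c-z}) and \eqref{eq 2-0} that the concrete element $\pi_{\wz\psi,L_w}(G_l)\in H^p_{\psi,L_w}(\rn)$ represents $l$ under the pairing. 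This is the surjectivity argument your proposal is missing; without it, one cannot conclude that $H^p_{L_w}(\rn)\st H^p_{\psi,L_w}(\rn)$.
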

\begin{proof}
We first prove the following inclusion:
\begin{eqnarray}\label{eq 2-2}
H^p_{\psi,L_w}(\rn)\st\lf(H^{p'}_{L^\ast_w}(\rn)\r)^\ast= H^p_{L_w}(\rn).
\end{eqnarray}

Take a function $\wz\psi\in\Psi_{\az,\beta}(\Sigma_\mu^0)$ such that
$\int_0^\fz\psi(t^2)\wz\psi(t^2)\,\frac{dt}{t}=1$.
Then, for any $f\in L^2(w,\,\rn)\cap H^p_{\psi,L_w}(\rn)$
and $g\in L^2(w,\,\rn)\cap H^{p'}_{L^\ast_w}(\rn)$,
by Lemma \ref{lem c-z} and \eqref{eq 2-0}, we conclude that
\begin{eqnarray*}
\int_\rn f(x)\ov{g(x)}w(x)\,dx
&&=\int_\rn\pi_{\wz\psi,L_w}\circ Q_{\psi,L_w}(f)(x)\ov{g(x)}w(x)\,dx\\
&&=\int_0^\fz\int_\rn Q_{\psi,L_w}(f)(x,t)\ov{Q_{\wz\psi,L^\ast_w}(g)(x,t)}
w(x)\,dx\,\frac{dt}{t}.
\end{eqnarray*}
Hence, from this, Lemmas \ref{lem dual tent} and \ref{lem element}, and Proposition \ref{pro 1},
it follows that
\begin{eqnarray*}
\lf|\int_\rn f(x)\ov{g(x)}w(x)\,dx\r|
&&\ls\|Q_{\psi,L_w}(f)\|_{T^p(w,\,\rn)}
\|Q_{\wz\psi,L^\ast_w}(g)\|_{T^{p'}(w,\,\rn)}\\
&&\sim\|f\|_{H^p_{\psi,L_w}(\rn)}\|g\|_{H^{p'}_{L^\ast_w}(\rn)}.
\end{eqnarray*}
This, together with a density argument, implies \eqref{eq 2-2}.

Next, we prove the reverse inclusion of \eqref{eq 2-2}.
Take a function $\psi\in\Psi_{\beta,\az}(\Sigma_\mu^0)$ with
$\az\in(0,\,\fz)$ and $\bz\in(n[\max\{\frac{1}{p},1\}-\frac{1}{2}],\,\fz$).
Then, for any $F\in T^{p'}(w,\,\rn)$, by Lemma \ref{lem element} and
Proposition \ref{pro 1}, we obtain
$\|\pi_{\psi,L^\ast_w}(F)\|_{H^{p'}_{L_w^\ast}(\rn)}\ls\|F\|_{T^{p'}(w,\,\rn)}$,
which implies $\pi_{\psi,L^\ast_w}(F)\in H^{p'}_{L_w^\ast}(\rn)$.
For any $l\in(H^{p'}_{L_w^\ast}(\rn))^\ast$
and $F\in T^{p'}(w,\,\rn)$, define
\begin{eqnarray*}
\langle l,\,F\rangle:=l\lf(\pi_{\psi,L^\ast_w}(F)\r).
\end{eqnarray*}
Then we find that, for any $F\in T^{p'}(w,\,\rn)$,
\begin{eqnarray*}
|\langle l,\, F \rangle|\le \|l\|_{(H^{p'}_{L_w^\ast}(\rn))^\ast}
\|\pi_{\psi,L^\ast_w}F\|_{H^{p'}_{L_w^\ast}(\rn)}
\ls \|l\|_{(H^{p'}_{L_w^\ast}(\rn))^\ast}\|F\|_{T^{p'}(w,\,\rn)}.
\end{eqnarray*}
This implies $l\in (T^{p'}(w,\,\rn))^\ast$.
Since $(T^{p'}(w,\,\rn))^\ast= T^p(w,\,\rn)$ (see Lemma \ref{lem dual tent}),
we know that there exists a function $G_l\in T^p(w,\,\rn)$ such that,
for any $F\in T^{p'}(w,\,\rn)$,
\begin{eqnarray}\label{eq 2-3}
\langle l,\,F \rangle
= l\lf(\pi_{\psi,L^\ast_w}(F)\r)
=\int_0^\fz\int_\rn F(x,t)\ov{G_l(x,t)}w(x)\,\frac{dx\,dt}{t}
=\langle F,\,G_l \rangle
\end{eqnarray}
and
\begin{eqnarray}\label{eq 2-6}
\|G_l\|_{T^p(w,\,\rn)}\sim \|l\|_{\lf(H^{p'}_{L_w^\ast}(\rn)\r)^\ast}.
\end{eqnarray}
Choose a function $\wz\psi\in\Psi_{\az,\beta}(\Sigma_\mu^0)$
with $\az\in(0,\,\fz)$ and $\bz\in(n[\max\{\frac{1}{p},1\}-\frac{1}{2}],\,\fz)$
such that $\int_0^\fz \psi(t^2)\wz\psi(t^2)\,\frac{dt}{t}=1$.
By Lemma \ref{lem element}, we have
\begin{eqnarray}\label{eq 2-5}
\|\pi_{\wz\psi,L_w}(G_l)\|_{H_{\psi,L_w}^p(\rn)}\ls\|G_l\|_{T^p(w,\,\rn)}.
\end{eqnarray}

Next, we prove that, for any $f\in H^{p'}_{L^\ast_w}(\rn)$,
\begin{eqnarray}\label{eq 2-4}
\int_\rn f(x)\ov{\pi_{\wz\psi,L_w}(G_l)(x)}w(x)\,dx= l(f)
\end{eqnarray}
and
\begin{eqnarray}\label{eq 2-x}
\|\pi_{\wz\psi,L_w}(G_l)\|_{H_{\psi,L_w}^p(\rn)}
\sim \|l\|_{(H^{p'}_{L_w^\ast}(\rn))^\ast}.
\end{eqnarray}

Since $T^2(w,\,\rn)\cap T^p(w,\,\rn)$ is dense in $T^p(w,\,\rn)$,
by \eqref{eq 2-0} and a density argument,
we conclude that, for any $f\in L^2(w,\,\rn)\cap H_{L^\ast_w}^{p'}(\rn)$,
\begin{eqnarray*}
\int_\rn f(x)\ov{\pi_{\wz\psi,L_w}(G_l)(x)}w(x)\,dx
&&=\int_0^\fz\int_\rn Q_{\wz\psi,L_w^\ast}(f)(x,t)\ov{G_l(x,t)}w(x)\,dx\,\frac{dt}{t}\\
&&=\lf\langle Q_{\wz\psi,L_w^\ast}(f),\, G_l\r\rangle
\end{eqnarray*}
which, together with \eqref{eq 2-3} and Lemma \ref{lem c-z}, implies that,
for any $f\in L^2(w,\,\rn)\cap H_{L^\ast_w}^{p'}(\rn)$,
\begin{eqnarray*}
\int_\rn f(x)\ov{\pi_{\wz\psi,L_w}(G_l)(x)}w(x)\,dx
=l\lf(\pi_{\psi,L_w}\circ Q_{\wz\psi,L^\ast_w}(f)\r)
=l(f).
\end{eqnarray*}
By the fact that $L^2(w,\,\rn)\cap H_{L_w^\ast}^{p'}(\rn)$ is dense
in $H_{L_w^\ast}^{p'}(\rn)$, we obtain \eqref{eq 2-4}.
From \eqref{eq 2-6} and \eqref{eq 2-5}, it follows that
$\|\pi_{\wz\psi,L_w}(G_l)\|_{H_{\psi,L_w}^p(\rn)}\ls
\|l\|_{(H^{p'}_{L_w^\ast}(\rn))^\ast}$.
By \eqref{eq 2-4}, we further see that
$\|l\|_{(H^{p'}_{L_w^\ast}(\rn))^\ast}\ls\|\pi_{\wz\psi,L_w}(G_l)\|_{H_{\psi,L_w}^p(\rn)}$.
Therefore, \eqref{eq 2-x} holds true. This implies that
\begin{eqnarray*}
H^p_{L_w}(\rn)=\lf(H^p_{L^\ast_w}(\rn)\r)^\ast\st H^p_{\psi,L_w}(\rn),
\end{eqnarray*}
which, together with \eqref{eq 2-2}, then completes the proof of Proposition \ref{pro 2}.
\end{proof}
\section{Proof of Proposition \ref{thm equi}}\label{s3.x}
\hskip\parindent
In this section, we show Proposition \ref{thm equi}.

Let $w\in A_2(\rn)$. For any $k\in\nn$, $f\in L^2(w,\,\rn)$ and $x\in\rn$,
define
\begin{eqnarray*}
S_{L_w,k}(f)(x):=\lf[\iint_{\bgz(x)}\lf|(t^2L_w)^ke^{-t^2L_w}(f)(y)\r|^2w(y)\,
\frac{dy}{w(B(x,t))}\,\frac{dt}{t}\r]^{\frac{1}{2}}.
\end{eqnarray*}
Noticing that, for any $w\in A_2(\rn)$, $w(x)\,dx$ is a doubling measure on $\rn$,
by Proposition \ref{pro ODEB} and \cite[Theorem 2.13]{BCKYY13-1}, we know that,
for any given $p\in(\frac{2n}{n+1},\,\frac{2n}{n-1})$, there exists a positive
constant $C$ such that, for any $f\in L^p(w,\,\rn)$,
\begin{eqnarray}\label{eq Spb}
\|S_{L_w,k}(f)\|_{L^p(w,\,\rn)}\le C\|f\|_{L^p(w,\,\rn)}.
\end{eqnarray}
\begin{proof}[Proof of Proposition \ref{thm equi}]
For $p\in(\frac{2n}{n+1},\,\frac{2n}{n-1})$, by Propositions \ref{pro 1}
and \ref{pro 2}, we see that $L^2(w,\,\rn)\cap H_{L_w}^p(\rn)$ is dense in
$H_{L_w}^p(\rn)$. Since $L^2(w,\,\rn)\cap L^p(w,\,\rn)$ is dense in $L^p(w,\,\rn)$,
to prove Proposition \ref{thm equi}, we only need to show that
\begin{eqnarray}\label{eq 2-x1}
\lf[L^2(w,\,\rn)\cap H_{L_w}^p(\rn)\r]=\lf[L^2(w,\,\rn)\cap L^p(w,\,\rn)\r]
\end{eqnarray}
and, for any $f\in L^2(w,\,\rn)\cap L^p(w,\,\rn)$,
\begin{eqnarray}\label{eq 2-x2}
\|f\|_{H_{L_w}^p(\rn)}\sim\|f\|_{L^p(w,\,\rn)}.
\end{eqnarray}

For any $k\in\nn$ with $k>n(\max\{\frac{1}{p},\,1\}-\frac12)$,
take $\psi(z):=z^ke^{-z}$ for all $z\in\Sigma_\mu^0$.
From \eqref{eq 2.0}, it is easy to see that, for any $\az\in(0,\,\fz)$,
$\psi\in\Psi_{k,\az}(\Sigma_\mu^0)$.
Then, for any $f\in L^2(w,\,\rn)\cap L^p(w,\,\rn)$, we have
\begin{eqnarray}\label{eq 2-7}
\|f\|_{H_{L_w}^p(\rn)}&&\sim\lf\|\lf\{
\iint_{\Gamma(\cdot)}\lf|\psi(t^2L_w)(f)(y)\r|^2w(y)\,\frac{dy}{w(B(\cdot,t))}\,
\frac{dt}{t}\r\}^{\frac12}\r\|_{L^p(w,\,\rn)}\\
&&\sim \lf\|S_{L_w,k}(f)\r\|_{L^p(w,\,\rn)}
\ls\|f\|_{L^p(w,\,\rn)}.\noz
\end{eqnarray}

On the other hand, taking an appropriate $\wz\psi\in\Psi(\Sigma_\mu^0)$
and using \eqref{eq 2-0},
Lemmas \ref{lem c-z}, \ref{lem dual tent} and \ref{lem element},
Propositions \ref{pro 1} and \ref{pro 2}, we conclude that, for any
$f\in L^2(w,\,\rn)\cap H_{L_w}^p(\rn)$ and $g\in L^2(w,\,\rn)\cap L^p(w,\,\rn)$ with
$\|g\|_{L^{p'}(w,\,\rn)}=1$,
\begin{eqnarray*}
\lf|\int_\rn f(x)\ov{g(x)}w(x)\,dx\r|
&&\sim\lf|\int_\rn \pi_{\psi,L_w}\circ Q_{\wz\psi,L_w}(f)(x)\ov{g(x)}w(x)\,dx\r|\\
&&\ls\lf\|Q_{\wz\psi,L_w}(f)\r\|_{T^p(w,\,\rn)}
\lf\|Q_{\wz\psi,L_w^\ast}(g)\r\|_{T^{p'}(w,\,\rn)}\\
&&\ls\|f\|_{H_{L_w}^p(\rn)}\lf\|S_{L_w,k}(g)\r\|_{L^{p'}(w,\,\rn)}\\
&&\ls\|f\|_{H_{L_w}^p(\rn)}\|g\|_{L^{p'}(w,\,\rn)}\ls\|f\|_{H_{L_w}^p(\rn)}.
\end{eqnarray*}
This implies $\|f\|_{L^p(w,\,\rn)}\ls\|f\|_{H_{L_w}^p(\rn)}$.
By this and \eqref{eq 2-7}, we obtain \eqref{eq 2-x1} and \eqref{eq 2-x2},
which then completes the proof of Proposition \ref{thm equi}.
\end{proof}

\section{Proof of Proposition \ref{thm main1}}\label{s4}
\hskip\parindent
In this section, we show Proposition \ref{thm main1}. To this end,
we first establish some technical lemmas.

\begin{lem}\label{lem inter-Hardy}
Let $w\in A_2(\rn)$. For any $\theta\in (0,\,1)$ and $p_0,\,p_1\in [1,\,\fz)$,
\begin{eqnarray*}
\lf[H_{L_w}^{p_0}(\rn),\,H_{L_w}^{p_1}(\rn)\r]_{\theta}= H_{L_w}^p(\rn),
\end{eqnarray*}
where $1/p=(1-\theta)/{p_0}+\theta/{p_1}$.
\end{lem}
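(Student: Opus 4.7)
The strategy is to realize each $H_{L_w}^p(\rn)$ as a complemented subspace (retract) of the weighted tent space $T^p(w,\rn)$, uniformly in $p\in[1,\fz)$, and then invoke the standard retract theorem for complex interpolation together with Lemma \ref{lem interpolation tent} on interpolation of weighted tent spaces. The analogous argument for the classical elliptic operator was used in \cite{HMM11}, and the square-function machinery of Section \ref{s3} is designed precisely so that one can carry it out in the weighted setting.

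First, I would select a single pair $(\psi,\wz\psi)$ of auxiliary holomorphic functions that serves all three exponents $p_0,p_1$ and $p$ at once. Taking $\psi(z):=z^Ne^{-z}$ for $z\in\Sigma_\mu^0$ with $N\in\nn$ chosen strictly larger than $n[\max\{1/p_0,1/p_1,1\}-1/2]$, and $\wz\psi$ a suitable real multiple of $\psi$ normalized so that $\int_0^\fz\psi(t^2)\wz\psi(t^2)\,dt/t=1$ (cf.\ Remark \ref{rem 1}), the exponential decay at infinity places both $\psi$ and $\wz\psi$ simultaneously in $\Psi_{\az,\bz}(\Sigma_\mu^0)$ and in $\Psi_{\bz,\az}(\Sigma_\mu^0)$ for every $\az\in(0,N]$ and every $\bz>0$. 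In particular, they are admissible for Definition \ref{def square Hardy1} for every $p\in[1,\fz)$, on both sides of the threshold $p=2$.

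With this uniform choice, Propositions \ref{pro 1} and \ref{pro 2} give that $Q_{\psi,L_w}\colon H_{L_w}^q(\rn)\to T^q(w,\rn)$ is bounded, and Lemma \ref{lem element}(ii) gives that $\pi_{\wz\psi,L_w}\colon T^q(w,\rn)\to H_{L_w}^q(\rn)$ is bounded, for each $q\in\{p_0,p_1,p\}$. The Calder\'on reproducing formula (Lemma \ref{lem c-z}), which initially holds on $L^2(w,\rn)$, extends by density to each $H_{L_w}^q(\rn)$ to yield $\pi_{\wz\psi,L_w}\circ Q_{\psi,L_w}=I_{H_{L_w}^q(\rn)}$. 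Hence $H_{L_w}^q(\rn)$ is a retract of $T^q(w,\rn)$ via the same pair of operators for all three values of $q$.

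Applying the retract theorem for complex interpolation (see, e.g., \cite[Theorem 6.4.2]{BL76}) together with Lemma \ref{lem interpolation tent}, I would conclude
\begin{equation*}
\lf[H_{L_w}^{p_0}(\rn),\,H_{L_w}^{p_1}(\rn)\r]_\theta
=\pi_{\wz\psi,L_w}\lf(\lf[T^{p_0}(w,\rn),\,T^{p_1}(w,\rn)\r]_\theta\r)
=\pi_{\wz\psi,L_w}\lf(T^p(w,\rn)\r)
=H_{L_w}^p(\rn),
\end{equation*}
with equivalent norms, where the last equality again uses Lemma \ref{lem c-z} to see that every $f\in H_{L_w}^p(\rn)$ lies in $\pi_{\wz\psi,L_w}(T^p(w,\rn))$ via the factorization $f=\pi_{\wz\psi,L_w}(Q_{\psi,L_w}(f))$. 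The main technical point is the uniform choice of $(\psi,\wz\psi)$: since Definition \ref{def square Hardy1} swaps the roles of the decay exponents $\az$ and $\bz$ across $p=2$, one must verify that a single pair is admissible for the whole interpolation scale, and the gain from taking $\psi(z)=z^Ne^{-z}$ with $N$ sufficiently large is exactly that the exponential factor places $\psi$ in every $\Psi_{\az,\bz}$ class (in both orderings) that one needs.
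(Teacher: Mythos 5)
Your proposal takes essentially the same route as the paper: realize each $H_{L_w}^q(\rn)$ as a retract of the weighted tent space $T^q(w,\rn)$ via the pair $(Q_{\psi,L_w},\pi_{\wz\psi,L_w})$ and the Calder\'on reproducing formula, then transfer the complex interpolation of weighted tent spaces (Lemma \ref{lem interpolation tent}) through the retract. The paper states the conclusion citing the retract lemma from \cite[Lemma 7.11]{Kal07} (and explicitly records that the couple is compatible and the spaces are analytically convex), whereas you cite the classical Banach-space retract theorem in \cite{BL76} and make the admissible choice $\psi(z)=z^N e^{-z}$ with $\wz\psi$ a positive multiple explicit -- both are fine since $p_0,p_1\geq 1$ keeps everything in the Banach setting and a single such $\psi$ with $N>n/2$ is indeed admissible for Definition \ref{def square Hardy1} on both sides of $p=2$.
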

\begin{proof}
It is easy to see that $H_{L_w}^{p_0}(\rn)$ and $H_{L_w}^{p_1}(\rn)$
are \emph{compatible}, namely, $H_{L_w}^{p_0}(\rn)$ and $H_{L_w}^{p_1}(\rn)$
are subspaces of $H_{L_w}^{p_0}(\rn)+H_{L_w}^{p_1}(\rn)$ endowed with the norm
\begin{eqnarray*}
&&\|f\|_{H_{L_w}^{p_0}(\rn)+H_{L_w}^{p_1}(\rn)}\\
&&\hs:=\inf\lf\{\|f_0\|_{H_{L_w}^{p_0}(\rn)}
+\|f_1\|_{H_{L_w}^{p_1}(\rn)}:\ f_0+f_1=f,\ f_0\in H_{L_w}^{p_0}(\rn),\ f_1\in H_{L_w}^{p_1}(\rn)\r\},
\end{eqnarray*}
and
$H_{L_w}^{p_0}(\rn)\cap H_{L_w}^{p_1}(\rn)$ is dense in $H_{L_w}^{p_i}(\rn)$, $i\in\{0,\,1\}$.

By Lemmas \ref{lem element} and \ref{lem c-z}, we know that, for $i\in\{0,\,1\}$ and
suitable $\psi,\,\wz\psi\in\Psi(\Sigma_\mu^0)$,
$Q_{\psi,L_w}$ is bounded from
$H_{L_w}^{p_i}(\rn)$ to $T^{p_i}(w,\,\rn)$,
$\pi_{\wz\psi,L_w}$ is bounded from $T^{p_i}(w,\,\rn)$ to $H_{L_w}^{p_i}(\rn)$,
and $\pi_{\wz\psi,L_w}\circ Q_{\psi,L_w}=I$ on $H_{L_w}^{p_i}(\rn)$.
This implies that $\{H_{L_w}^{p_0}(\rn),\,H_{L_w}^{p_1}(\rn)\}$
is a \emph{retract} of $\{T^{p_0}(w,\,\rn),\,T^{p_1}(w,\,\rn)\}$
(see \cite[p.\,151]{Kal07} for the definition).
Since $H_{L_w}^{p_i}(\rn)$, $i\in\{0,\,1\}$, is a Banach space, we know that
$H_{L_w}^{p_i}(\rn)$, $i\in\{0,\,1\}$, is \emph{analytic convex}
(see \cite[p.\,145]{Kal07} for the definition).

Hence, from the above argument, \cite[Lemma 7.11]{Kal07}
and Lemma \ref{lem interpolation tent},
it follows that, for some suitable $\wz\psi\in\Psi(\Sigma_\mu^0)$, any $\theta\in(0,\,1)$
and $1/p=(1-\theta)/{p_0}+\theta/{p_1}$,
\begin{eqnarray*}
\lf[H_{L_w}^{p_0}(\rn),\,H_{L_w}^{p_1}(\rn)\r]_\theta
&&=\pi_{\wz\psi,L_w}\lf(\lf[T^{p_0}(w,\,\rn),\,T^{p_1}(w,\,\rn)\r]_\theta\r)\\
&&=\pi_{\wz\psi,L_w}\lf(T^p(w,\,\rn)\r)=H_{L_w}^p(\rn).
\end{eqnarray*}
This finishes the proof of Lemma \ref{lem inter-Hardy}.
\end{proof}

The following lemma is an analogue of \cite[Lemma 2.10]{CR13}.
\begin{lem}\label{lem 5.1}
Let $E$ and $F$ be two closed sets of $\rn$. Then
there exist positive constants $C$ and $c$ such that, for any $t\in(0,\,\fz)$
and $f\in L^2(w,\,\rn)$ with $\supp f\st E$,
\begin{eqnarray}\label{eq 5.0}
\|e^{-tL_w}(f)\|_{L^2(w,\,F)}\le Ce^{-\frac{d(E,\,F)^2}{ct}}\|f\|_{L^2(w,\,E)},
\end{eqnarray}
\begin{eqnarray}\label{eq 5.1}
\|\sqrt{t}\nabla e^{-tL_w}(f)\|_{L^2(w,\,F)}\le Ce^{-\frac{d(E,\,F)^2}{ct}}\|f\|_{L^2(w,\,E)}
\end{eqnarray}
and, for any $t\in(0,\,\fz)$ and $\vec{f}:=(f_1,\,\ldots,\,f_n)$ with
$f_i\in L^2(w,\,\rn)$, $\supp f_i\st E$, $i\in\{1,\,\ldots,\,n\}$,
\begin{eqnarray}\label{eq 5.2}
\lf\|\sqrt{t}e^{-tL_w}\lf(\frac{1}{w}\div \lf(w\vec{f}\r)\r)\r\|_{L^2(w,\,F)}
\le  Ce^{-\frac{d(E,\,F)^2}{ct}}\|\vec{f}\|_{L^2(w,\,E)}.
\end{eqnarray}
\end{lem}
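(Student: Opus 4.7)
The plan is to use the Davies perturbation method (exponential weights) adapted to the degenerate elliptic setting, which is the standard route to Gaffney-type off-diagonal estimates. Throughout, let $\phi:\rn\to\rr$ be a bounded Lipschitz function with $\|\nabla\phi\|_{L^\fz}\le 1$; at the end one specifies $\phi$ to be a truncation of $d(\cdot,E)$ and optimizes a scalar parameter $\rho\in(0,\fz)$.

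For \eqref{eq 5.0}, I would set $u(t,\cdot):=e^{-tL_w}f$ and $I(t):=\int_\rn |u(t,x)|^2 e^{2\rho\phi(x)}w(x)\,dx$. Using the weak formulation coming from the sesquilinear form \eqref{eq sesqui form} together with $\pa_t u=-L_w u$, one obtains
\begin{eqnarray*}
I'(t)=-2\Re\int_\rn A\nabla u\cdot\ov{\nabla(u\,e^{2\rho\phi})}\,dx
=-2\Re\int_\rn A\nabla u\cdot\ov{\nabla u}\,e^{2\rho\phi}\,dx
-4\rho\Re\int_\rn A\nabla u\cdot\ov{u}\,\nabla\phi\,e^{2\rho\phi}\,dx.
\end{eqnarray*}
Applying \eqref{degenerate C1}, \eqref{degenerate C2} and the Cauchy--Schwarz inequality to the cross term, and absorbing a fraction of the first term, one gets $I'(t)\le C\rho^2 I(t)$ with $C$ depending only on $\lz,\blz$. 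Gronwall then yields $I(t)\le e^{C\rho^2 t}I(0)$; choosing $\phi(x):=\min\{d(x,E),d(E,F)\}$ and optimizing in $\rho\sim d(E,F)/t$ produces \eqref{eq 5.0}. As a byproduct, integrating the differential inequality also delivers the \emph{weighted Caccioppoli bound}
\begin{eqnarray*}
\int_0^t\int_\rn |\nabla u(s,x)|^2 e^{2\rho\phi(x)}w(x)\,dx\,ds\ls e^{C\rho^2 t}\|e^{\rho\phi}f\|_{L^2(w,\rn)}^2,
\end{eqnarray*}
which will be crucial for \eqref{eq 5.1}.

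For \eqref{eq 5.1}, I would combine the Caccioppoli bound above with the analyticity of $\{e^{-tL_w}\}_{t>0}$ on $L^2(w,\rn)$, which gives $\|tL_w e^{-tL_w}g\|_{L^2(w,\rn)}\ls\|g\|_{L^2(w,\rn)}$. Writing $e^{-tL_w}=e^{-(t/2)L_w}e^{-(t/2)L_w}$ and pairing $\nabla e^{-(t/2)L_w}(e^{-(t/2)L_w}f)$ with itself, the Caccioppoli bound applied on the interval $[t/2,t]$ (with the exponential weight) yields
\begin{eqnarray*}
\int_\rn t|\nabla u(t,x)|^2 e^{2\rho\phi(x)}w(x)\,dx\ls e^{C\rho^2 t}\|e^{\rho\phi}e^{-(t/2)L_w}f\|_{L^2(w,\rn)}^2.
\end{eqnarray*}
Plugging \eqref{eq 5.0} into the right-hand side (applied to $e^{-(t/2)L_w}$) and optimizing $\rho$ as before produces \eqref{eq 5.1}.

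For \eqref{eq 5.2}, I would argue by duality in the weighted $L^2$ inner product. For any $g\in L^2(w,\rn)$ supported in $F$, the identity $\int_\rn \frac{1}{w}\div(w\vec f)\,\ov h\,w\,dx=-\int_\rn \vec f\cdot\ov{\nabla h}\,w\,dx$ (valid for $h\in\ch_0^1(w,\rn)$) gives
\begin{eqnarray*}
\int_F \sqrt{t}\,e^{-tL_w}\lf(\frac{1}{w}\div(w\vec f)\r)\ov{g}\,w\,dx
=-\int_E \sqrt{t}\,\vec f\cdot\ov{\nabla e^{-tL_w^\ast}g}\,w\,dx,
\end{eqnarray*}
and since $L_w^\ast$ has exactly the same structure as $L_w$ (with the matrix $A^\ast$ satisfying the same conditions \eqref{degenerate C1}--\eqref{degenerate C2}), the estimate \eqref{eq 5.1} applied to $L_w^\ast$ with the roles of $E$ and $F$ exchanged yields \eqref{eq 5.2} by taking the supremum over $g$ with $\|g\|_{L^2(w,F)}\le 1$.

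The main obstacle is the correct handling of \eqref{eq 5.1}: the weight $w$ appears inside every norm and inside the ellipticity constants, so one must verify that the interplay between the Davies exponential weight $e^{\rho\phi}$ and the Muckenhoupt weight $w$ in the form $\Re\int A\nabla u\cdot\ov{\nabla(u e^{2\rho\phi})}\,dx$ still yields a clean Caccioppoli-type inequality (the weight $w$ does not touch $\nabla\phi$, since $w$ sits inside $A$ via \eqref{degenerate C1} and \eqref{degenerate C2}, which is exactly what makes the argument go through). Once this is checked, the remainder is bookkeeping in the spirit of \cite[Lemma 2.10]{CR13}.
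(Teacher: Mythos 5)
Your treatment of \eqref{eq 5.2} coincides exactly with the paper's: the paper establishes only \eqref{eq 5.2}, and does so by the same duality computation you give, i.e.\ moving the semigroup to $e^{-tL_w^\ast}$, integrating by parts to transfer $\frac1w\div(w\cdot)$ onto the gradient of $e^{-tL_w^\ast}g$ (so that the weight $w$ cancels in the pairing $\int \frac1w\div(w\vec f)\,\ov h\,w\,dx=-\int\vec f\cdot\ov{\nabla h}\,w\,dx$), applying H\"older, and then invoking \eqref{eq 5.1} for the adjoint operator. For \eqref{eq 5.0} and \eqref{eq 5.1}, however, the paper does not reprove anything: it simply cites \cite[Theorem 1.6]{CR08} and \cite[Proposition 2.7]{ZCJY14}, respectively. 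So the bulk of your proposal, the Davies perturbation argument, is content the paper delegates to the references.

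Your Davies argument for \eqref{eq 5.0} is correct as sketched: the degenerate ellipticity \eqref{degenerate C1}--\eqref{degenerate C2} places $w$ inside $A$, so the differentiated identity for $I(t)$ reproduces the usual Cauchy--Schwarz/absorption inequality with $w$ riding along harmlessly, Gronwall yields $I(t)\le e^{C\rho^2 t}I(0)$, and choosing $\phi=\min\{d(\cdot,E),d(E,F)\}$ and $\rho\sim d(E,F)/t$ gives \eqref{eq 5.0}. But the passage to \eqref{eq 5.1} has a genuine gap as you state it. Integrating $I'(t)+c\int|\nabla u|^2 e^{2\rho\phi}w\,dx\le C\rho^2 I(t)$ over $[t/2,t]$ produces the \emph{time-averaged} bound $\int_{t/2}^t\|\nabla u(s)\|^2_{L^2(e^{2\rho\phi}w)}\,ds\ls e^{C\rho^2 t}\|e^{\rho\phi}e^{-(t/2)L_w}f\|^2_{L^2(w)}$, which only yields existence of a good slice $s^\ast\in(t/2,t)$ and not the pointwise-in-time inequality $t\|\nabla u(t)\|^2_{L^2(e^{2\rho\phi}w)}\ls\cdots$ that you assert. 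Upgrading to a fixed $t$ requires either a second differential identity for $s\mapsto s\int|\nabla u(s)|^2 e^{2\rho\phi}w\,dx$ (which brings in $\int|L_w u|^2 e^{2\rho\phi}w\,dx$ and hence an additional analyticity estimate for the \emph{perturbed} operator $e^{\rho\phi}L_w e^{-\rho\phi}$), or a complex-time Cauchy integral argument for the twisted semigroup. This is precisely the technical content of \cite[Proposition 2.7]{ZCJY14} that the paper chose to cite rather than reproduce. So: for the only part the paper actually proves your argument matches, and your re-derivation of the two cited estimates is correct in direction but, as written, incomplete at the step from the Caccioppoli integral to the pointwise gradient bound.
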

\begin{proof}
Noticing \eqref{eq 5.0} and \eqref{eq 5.1} have been proved, respectively, in
\cite[Theorem 1.6]{CR08} and \cite[Proposition 2.7]{ZCJY14},
to prove Lemma \ref{lem 5.1}, we only need to show \eqref{eq 5.2}.

Indeed, for any $g\in L^2(w,\,F)$ with $\supp g\st F$ and $\|g\|_{L^2(w,\,F)}=1$,
by the H\"{o}lder inequality and \eqref{eq 5.1}, we have
\begin{eqnarray*}
&&\lf|\int_F\sqrt{t}e^{-tL_w}\lf(\frac{1}{w}\div(w\vec{f})\r)(x) g(x)w(x)\,dx\r|\\
&&\hs=\lf|\int_\rn\sqrt{t}\frac{1}{w(x)}\div(w\vec{f})(x)e^{-tL_w^\ast}(g)(x)w(x)\,dx\r|\\
&&\hs=\lf|\int_\rn\sqrt{t}w(x)\vec{f}(x)\cdot\nabla e^{-tL_w^\ast}(g)(x)\,dx\r|
\le\int_E|\vec{f}(x)|\lf|\sqrt{t}\nabla e^{-tL_w^\ast}(g)(x)\r|w(x)\,dx\\
&&\hs\le\lf[\int_E\lf|\sqrt{t}\nabla e^{-tL_w^\ast}(g)(x)\r|^2w(x)\,dx\r]^{\frac12}
\lf[\int_E |\vec{f}(x)|^2w(x)\,dx\r]^{\frac12}
\ls e^{-\frac{d(E,\,F)^2}{ct}}\|f\|_{L^2(w,\,E)},
\end{eqnarray*}
which, together with a dual argument, further implies that \eqref{eq 5.2}.
This finishes the proof of Lemma \ref{lem 5.1}.
\end{proof}

By Lemma \ref{lem 5.1}, we obtain the following lemma.
\begin{lem}\label{lem 5.2}
Let $m\in\nn$ and $E,\,F$ be closed sets of $\rn$.
Then there exist positive constants $C$ and $c$ such that, for any $t\in(0,\,\fz)$
and $\vec{f}=(f_1,\,\ldots,\,f_n)$, with $f_i\in L^2(w,\,\rn)$, $\supp f_i\st E$,
$i\in\{1,\,\ldots,\,n\}$,
\begin{eqnarray*}
\lf\|\sqrt{t}\nabla L_w^{-1/2}\lf(I-e^{-tL_w}\r)^m\lf(\frac{1}{w}\div (w\vec{f})\r)\r\|
_{L^2(w,\,F)}
\le C\lf(\frac{[d(E,\,F)]^2}{t}\r)^{-\lf(m+\frac12\r)}\|\vec{f}\|_{L^2(w,\,E)}
\end{eqnarray*}
and
\begin{eqnarray*}
\lf\|\sqrt{t}\nabla\lf(\nabla L_w^{-1/2}\lf(I-e^{-tL_w}\r)^m\r)^\ast(\vec{f})\r\|
_{L^2(w,\,F)}
\le C\lf(\frac{[d(E,\,F)]^2}{t}\r)^{-\lf(m+\frac12\r)}\|\vec{f}\|_{L^2(w,\,E)}.
\end{eqnarray*}
\end{lem}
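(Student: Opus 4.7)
The plan is to derive both estimates from Lemma \ref{lem 5.1} via an integral representation of $L_w^{-1/2}(I-e^{-tL_w})^m$. Starting from $L_w^{-1/2}=\frac{1}{\sqrt{\pi}}\int_0^\fz e^{-uL_w}u^{-1/2}\,du$, the binomial expansion of $(I-e^{-tL_w})^m$ and the substitution $u\mapsto u-kt$ inside each summand give
\begin{equation*}
L_w^{-1/2}(I-e^{-tL_w})^m=\int_0^\fz e^{-uL_w}\psi_m(u,t)\,du,
\end{equation*}
where $\psi_m(u,t):=\frac{1}{\sqrt{\pi}}\sum_{k=0}^m\binom{m}{k}(-1)^k(u-kt)^{-1/2}\chi_{\{u>kt\}}$. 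The crucial point is that a Taylor expansion of $(1-kt/u)^{-1/2}$ in $t/u$, combined with the identity $\sum_{k=0}^m\binom{m}{k}(-1)^k k^j=0$ for $0\le j<m$, gives the cancellation $|\psi_m(u,t)|\ls t^m u^{-m-1/2}$ for $u\gg mt$, while $|\psi_m(u,t)|=u^{-1/2}/\sqrt{\pi}$ on $(0,t)$ and the coarse triangle inequality supplies $|\psi_m(u,t)|\ls\sum_{k=0}^m(u-kt)^{-1/2}\chi_{\{u>kt\}}$ on the intermediate range.

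Next, for the kernel-side bound I would write $\nabla e^{-uL_w}=\nabla e^{-uL_w/2}\circ e^{-uL_w/2}$ and split the inner factor as $\chi_G\cdot e^{-uL_w/2}(\frac{1}{w}\div(w\vec{f}))+\chi_{G^\com}\cdot e^{-uL_w/2}(\frac{1}{w}\div(w\vec{f}))$, where $G$ is the $d(E,F)/2$-neighborhood of $E$. Applying \eqref{eq 5.2} to the $\chi_{G^\com}$-piece (which gains $e^{-d(E,F)^2/(cu)}$ because $d(E,G^\com)\geq d(E,F)/2$) and the trivial $L^2(w)$-bound from \eqref{eq 5.2} to the $\chi_G$-piece, then applying \eqref{eq 5.1} with the roles of near/far swapped, yields
\begin{equation*}
\lf\|\nabla e^{-uL_w}\lf(\tfrac{1}{w}\div(w\vec{f})\r)\r\|_{L^2(w,F)}\ls u^{-1}e^{-d(E,F)^2/(cu)}\|\vec{f}\|_{L^2(w,E)}.
\end{equation*}
Substituting this into the representation reduces the first estimate to verifying the scalar inequality $\sqrt{t}\int_0^\fz u^{-1}e^{-d^2/(cu)}|\psi_m(u,t)|\,du\ls (t/d^2)^{m+1/2}$ with $d:=d(E,F)$.

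I would verify the latter by splitting $u$ into the three regimes $(0,t)$, $(t,(m+1)t)$, and $((m+1)t,\fz)$. On $(0,t)$, the elementary bound $e^{-d^2/(cu)}\le C_m(u/d^2)^{m+1/2}$ reduces the contribution to $d^{-(2m+1)}\int_0^t u^{m-1}\,du\cdot\sqrt{t}\sim(t/d^2)^{m+1/2}$. On $(t,(m+1)t)$ the local integrability of each $(u-kt)^{-1/2}$ combines with the same elementary exponential bound to give the same rate. On $((m+1)t,\fz)$, one uses the cancelled asymptotic $|\psi_m(u,t)|\ls t^m u^{-m-1/2}$, splits further at $u=d^2$, and applies $e^{-d^2/(cu)}\le C_m(u/d^2)^{m+1}$ on $u\le d^2$ (to avoid a logarithmic divergence that would arise from $\int u^{-1}\,du$ with the naive exponent $m+1/2$) and the trivial bound $e^{-d^2/(cu)}\le 1$ on $u>d^2$. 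The second estimate reduces to the first, since $L_w^{-1/2}$ and $(I-e^{-tL_w})^m$ commute and $\nabla^\ast=-\frac{1}{w}\div(w\cdot)$ in the $L^2(w)$-pairing force
\begin{equation*}
\nabla\lf(\nabla L_w^{-1/2}(I-e^{-tL_w})^m\r)^\ast(\vec{f})=-\nabla(L_w^\ast)^{-1/2}(I-e^{-tL_w^\ast})^m\lf(\tfrac{1}{w}\div(w\vec{f})\r),
\end{equation*}
so the first conclusion applied to $L_w^\ast$ (a degenerate elliptic operator of the same type, with matrix $A^\ast$) delivers the second. The main obstacle I anticipate is the bookkeeping on the intermediate range $u\in(t,(m+1)t)$: the singularities $(u-kt)^{-1/2}$ of $\psi_m$ are only locally integrable and prevent any single clean pointwise bound, so the exponent $N$ in $e^{-x}\le C_N x^{-N}$ must be chosen differently across the sub-integrals of the three regimes in order for the target polynomial rate $(t/d^2)^{m+1/2}$ to emerge without a spurious logarithmic loss.
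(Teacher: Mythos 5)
Your proof is correct and follows essentially the same route the paper has in mind: the authors omit the details, pointing instead to \cite[Lemma 2.2]{HM03}, and that argument is precisely the one you reconstruct — write $L_w^{-1/2}=\frac{1}{\sqrt\pi}\int_0^\infty e^{-uL_w}u^{-1/2}\,du$, expand $(I-e^{-tL_w})^m$ by the binomial theorem, shift the integration variable to collect the kernel $\psi_m(u,t)$, exploit the vanishing moments $\sum_{k=0}^m\binom{m}{k}(-1)^k k^j=0$ for $j<m$ to obtain $|\psi_m(u,t)|\lesssim t^m u^{-m-1/2}$ at infinity, combine with the pointwise-in-$u$ off-diagonal estimate $\|\nabla e^{-uL_w}(\frac1w\div(w\vec f))\|_{L^2(w,F)}\lesssim u^{-1}e^{-d^2/(cu)}\|\vec f\|_{L^2(w,E)}$ (obtained from \eqref{eq 5.1} and \eqref{eq 5.2} via the semigroup splitting $e^{-uL_w}=e^{-uL_w/2}e^{-uL_w/2}$), and verify the scalar integral bound by the three-regime splitting you describe, with the second inequality then following from the first applied to $L_w^\ast$ via $\nabla^\ast=-\frac1w\div(w\,\cdot)$. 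Your bookkeeping in each regime (in particular choosing the exponent $m+1$ rather than $m+\tfrac12$ on $((m+1)t,d^2)$ to avoid the logarithm) is the standard fix and is sound.
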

The proof of Lemma \ref{lem 5.2} is a complete analogue of that of \cite[Lemma 2.2]{HM03},
the details being omitted.

The following local weighted Poincar\'{e} inequality is just \cite[Theorem (1.2)]{FKS82}.
\begin{lem}[\cite{FKS82}]\label{lem poincare}
Let $n\geq 2$.
For any given $p\in (1,\,\fz)$ and $w\in A_p(\rn)$, there exist positive constants
$C$ and $\delta$ such that, for any ball $B\equiv B(x_B,r_B)$ of $\rn$
with $x_B\in\rn$ and $r_B\in (0,\,\fz)$, any Lipschitz continuous function $u$
on $\bar{B}$, and any number $k\in [1,\,\frac{n}{n-1}+\delta)$,
\begin{equation*}
\lf[\frac{1}{w(B)}\int_B\lf|u(x)-u_B\r|^{kp}w(x)\,dx\r]^{\frac1{kp}}\le Cr_B
\lf[\frac{1}{w(B)}\int_B|\nab u(x)|^{p}w(x)\,dx\r]^{\frac1p},
\end{equation*}
where
\begin{eqnarray}\label{eq mean}
u_B:=\frac{1}{w(B)}\int_B u(x)w(x)\,dx.
\end{eqnarray}
\end{lem}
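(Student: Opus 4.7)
The plan is to derive this Fabes--Kenig--Serapioni weighted Poincar\'e inequality from two ingredients: the standard pointwise Riesz-potential representation of $u-u_B$, and the self-improvement of the Muckenhoupt class $A_p(\rn)$ to a reverse H\"older class $RH_r(\rn)$ for some $r>1$. The exponent gain $\delta>0$ will come directly from $r-1>0$.

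First I would reduce the weighted mean $u_B$ to the unweighted mean $u_B^{(0)}:=\frac{1}{|B|}\int_B u$. For any constant $c$, the triangle inequality together with Jensen's inequality give
$$\lf(\frac{1}{w(B)}\int_B |u-u_B|^{kp}w\r)^{1/(kp)}\le C\lf(\frac{1}{w(B)}\int_B |u-c|^{kp}w\r)^{1/(kp)},$$
so it suffices to bound the right-hand side with $c=u_B^{(0)}$. For this choice one has the classical pointwise estimate
$$|u(x)-u_B^{(0)}|\le C\int_B \frac{|\nab u(y)|}{|x-y|^{n-1}}\,dy=:CI_1^B(|\nab u|)(x),$$
obtained from $u(x)-u(y)=-\int_0^1 \nab u(x+t(y-x))\cdot(y-x)\,dt$ after averaging $y$ over $B$.

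The crux is then the weighted estimate
$$\|I_1^B(f)\|_{L^{kp}(w,B)}\le Cr_B\|f\|_{L^p(w,B)},\qquad f:=|\nab u|,$$
for $k\in[1,\frac{n}{n-1}+\delta)$. By Lemmas \ref{lem Ap-1} and \ref{lem Ap-2}, the hypothesis $w\in A_p(\rn)$ yields some $r>1$ with $w\in RH_r(\rn)$ and $w(E)/w(B)\ls(|E|/|B|)^{(r-1)/r}$ for every measurable $E\st B$. This dimensional-type estimate, combined with a dyadic or good-$\lz$ level-set decomposition of $I_1^B$, allows one to transfer unweighted Riesz-potential/Sobolev information into the weighted setting: bounding the level sets $\{x\in B:\ I_1^B(f)(x)>\lz\}$ by standard unweighted estimates for $I_1$ and then weighting by $w$ through the dimensional inequality produces the desired $L^p(w)$--$L^{kp}(w)$ bound, with admissible gain $\delta$ depending only on $r-1$ (hence only on $[w]_{A_p}$).

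The main obstacle is quantitatively extracting the strict gain $\delta>0$. A direct combination of H\"older's inequality with the unweighted Sobolev embedding delivers only the trivial case $k=1$, and the improvement genuinely requires the strict inequality $r>1$, exploited either through a good-$\lz$ argument on $I_1^B$ in the measure $w\,dx$, or via Muckenhoupt--Wheeden two-weight theory for fractional integrals specialized to a single $A_p$ weight. Scale and translation invariance of the $A_p$ and $RH_r$ characteristics ensure that the resulting constant is independent of $B$, completing the proof.
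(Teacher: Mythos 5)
This lemma is quoted directly from Fabes--Kenig--Serapioni \cite{FKS82} (their Theorem (1.2)); the paper under review does not reprove it, so there is no internal argument to compare against. Assessed on its own terms, your proposal correctly identifies the architecture of the FKS argument: reduce from the weighted mean $u_B$ to the unweighted mean, invoke the pointwise Riesz-potential bound $|u(x)-u^{(0)}_B|\lesssim I_1^B(|\nabla u|)(x)$, and then prove a restricted weighted norm inequality for $I_1^B$ whose exponent gain $\delta>0$ is produced by the self-improving $RH_r$ property of $A_p$ weights. The reduction step and the pointwise representation are both standard and correctly handled.

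The genuine gap is the step you yourself flag as ``the main obstacle'': the weighted fractional-integral estimate $\|I_1^B f\|_{L^{kp}(w,B)}\lesssim r_B\|f\|_{L^p(w,B)}$ with a strict gain $k>1$. Your sketch proposes to estimate the unweighted measure of the level sets $\{x\in B:\ I_1^B f(x)>\lambda\}$ by classical Riesz-potential theory and then transfer to $w$-measure via the dimensional inequality $w(E)/w(B)\lesssim(|E|/|B|)^{(r-1)/r}$. As literally written this does not close, for two reasons. First, the unweighted weak-type estimate for $I_1$ produces $\|f\|_{L^1(B)}$ or $\|f\|_{L^p(B)}$ on the right-hand side, not $\|f\|_{L^p(w,B)}$, and converting one to the other through $A_p$ reintroduces a loss that eats the $RH_r$ gain; second, applying the dimensional inequality to the level set and then integrating $\lambda^{kp-1}$ only converges for $kp$ strictly below $\tfrac{n}{n-1}\cdot\tfrac{r-1}{r}$, which is \emph{smaller} than the claimed threshold $\tfrac{n}{n-1}$, not larger. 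The FKS proof avoids this by splitting $I_1^B f$ itself (a Hedberg-type decomposition balanced against the two-sided $A_p$/$RH_r$ comparison of Lemma \ref{lem Ap-2}), rather than only intervening at the level sets of the output, which is how they extract a genuine $\delta>0$ uniformly in the ball. To complete the argument you would need to carry out this more delicate internal decomposition of the fractional integral, or cite Theorem 1.5 of \cite{FKS82} directly, as this paper does.
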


Let $w\in A_\fz(\rn)$. For any $p\in(0,\,\fz)$, the
\emph{weighted weak-$L^p$ space $L^{p,\fz}(w,\,\rn)$}
is defined as the set of all measurable functions $f$ on $\rn$ such that
\begin{eqnarray*}
\|f\|_{L^{p,\fz}(w,\,\rn)}
:=\sup_{\az\in(0,\,\fz)} \az\lf[w(\{x\in\rn:\ |f(x)|>\az\})\r]^{\frac1p}<\fz.
\end{eqnarray*}
For $L^{p,\fz}(w,\,\rn)$, we have the following Fatou lemma
(see \cite[Exercise 1.1.12]{Gra14} for its proof).
\begin{lem}\label{lem fatou}
Let $w\in A_\fz(\rn)$ and $p\in(0,\,\fz)$. Then there exists a positive
constant $C_{(w,\,p)}$, depending on $w$ and $p$, such that, for all
measurable functions $\{g_k\}_{k=1}^\fz$ on $\rn$,
\begin{eqnarray*}
\lf\|\liminf_{k\to\fz}|g_k|\r\|_{L^{p,\fz}(w,\,\rn)}\le C_{(w,\,p)}
\liminf_{k\to\fz}\|g_k\|_{L^{p,\fz}(w,\,\rn)}.
\end{eqnarray*}
\end{lem}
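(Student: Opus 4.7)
The plan is to reduce this weak-type Fatou inequality to the elementary Fatou lemma for the Radon measure $w(x)\,dx$ applied to indicator functions of superlevel sets. Set $g(x):=\liminf_{k\to\fz}|g_k(x)|$. The first step would be to establish the set-theoretic containment
\begin{eqnarray*}
\lf\{x\in\rn:\ g(x)>\az\r\}\st\bigcup_{N\in\nn}\bigcap_{k\geq N}\lf\{x\in\rn:\ |g_k(x)|>\az\r\}
\end{eqnarray*}
for every $\az\in(0,\fz)$. This follows immediately from the identity $\liminf_{k\to\fz}|g_k(x)|=\sup_{N\in\nn}\inf_{k\geq N}|g_k(x)|$: if $g(x)>\az$, then there must exist some $N\in\nn$ with $\inf_{k\geq N}|g_k(x)|>\az$, placing $x$ in the right-hand side.

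Next, I would apply the classical Fatou lemma for the measure $w(x)\,dx$ to the indicator functions $\{\chi_{\{|g_k|>\az\}}\}_{k\in\nn}$, which is legitimate because $w\in A_\fz(\rn)$ is locally integrable and so $w(x)\,dx$ is $\sigma$-finite. Combined with the containment above, this yields
\begin{eqnarray*}
w\lf(\{g>\az\}\r)\le w\lf(\bigcup_{N\in\nn}\bigcap_{k\geq N}\{|g_k|>\az\}\r)\le \liminf_{k\to\fz}w\lf(\{|g_k|>\az\}\r).
\end{eqnarray*}
From the very definition of $\|\cdot\|_{L^{p,\fz}(w,\,\rn)}$, for each $k\in\nn$ it holds true that $\az^p w(\{|g_k|>\az\})\le \|g_k\|_{L^{p,\fz}(w,\,\rn)}^p$, so taking $p$-th roots and multiplying by $\az$ gives
\begin{eqnarray*}
\az\lf[w(\{g>\az\})\r]^{1/p}\le \liminf_{k\to\fz}\|g_k\|_{L^{p,\fz}(w,\,\rn)}.
\end{eqnarray*}
Taking the supremum over $\az\in(0,\fz)$ on the left-hand side then produces the claimed inequality, in fact with constant $C_{(w,\,p)}=1$.

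There is essentially no obstacle here: the proof is soft, using only the definition of the weak quasi-norm and Fatou's lemma for measures. The one cosmetic subtlety is that one should first dispose of the trivial case $\liminf_{k\to\fz}\|g_k\|_{L^{p,\fz}(w,\,\rn)}=\fz$, in which the asserted inequality is automatic, and otherwise pass (without loss of generality) to a subsequence realizing the liminf as a genuine limit; neither the $A_\fz$ condition on $w$ nor the parameter $p$ intervenes beyond ensuring that $w(x)\,dx$ is $\sigma$-finite and that the quasi-norm is well defined, which explains why a constant depending on $(w,p)$ is more than enough.
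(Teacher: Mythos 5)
Your proof is correct, and it is essentially the argument the paper points to: the paper does not prove this lemma itself but cites \cite[Exercise 1.1.12]{Gra14}, whose standard solution is exactly your distribution-function argument (the containment $\{\liminf_k|g_k|>\az\}\st\bigcup_{N}\bigcap_{k\geq N}\{|g_k|>\az\}$ plus Fatou, or equivalently continuity from below, for the measure $w(x)\,dx$), which indeed yields the inequality with constant $1$. The only cosmetic remarks are that Fatou's lemma for nonnegative functions needs no $\sigma$-finiteness and that passing to a subsequence realizing the liminf is unnecessary, since the termwise bound $\az[w(\{|g_k|>\az\})]^{1/p}\le\|g_k\|_{L^{p,\fz}(w,\,\rn)}$ survives the liminf directly.
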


By Lemmas \ref{lem 5.1}, \ref{lem 5.2} and \ref{lem poincare}, we obtain the
following theorem which establishes the boundedness of Riesz transform
$\nabla L^{-1/2}_w$ on $L^q(w,\,\rn)$.
Theorem \ref{pro 4} is an analogue of \cite[Theorem 1.2]{HM03}.

\begin{thm}\label{pro 4}
Let $p:=\frac{2n}{n+1}$ and $w\in A_2(\rn)$.
Then there exists a positive constant $C$ such that,
for any $\az\in(0,\,\fz)$ and $f\in L^p(w,\,\rn)$,
\begin{eqnarray}\label{eq pro 4}
w\lf(\lf\{x\in\rn:\ \lf|\nabla L^{-1/2}_w(f)(x)\r|>\az\r\}\r)\le
\frac{C}{\az^p}\int_\rn|f(x)|^pw(x)\,dx.
\end{eqnarray}
Moreover, for any given $q\in(p,\,2]$, there exists a positive
constant $C$ such that, for any $f\in L^q(w,\,\rn)$,
\begin{eqnarray*}
\lf\|\nabla L_w^{-1/2}(f)\r\|_{L^q(w,\,\rn)}\le C\|f\|_{L^q(w,\,\rn)}.
\end{eqnarray*}
\end{thm}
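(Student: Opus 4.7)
The plan is to establish the weak-type $(p,p)$ inequality \eqref{eq pro 4} by a weighted Calder\'on--Zygmund decomposition in the spirit of Hofmann--Martell \cite{HM03}, and then to derive the strong-type $(q,q)$ bound for $q\in(p,2]$ by Marcinkiewicz interpolation against the $L^2(w,\rn)$-boundedness of $\nabla L_w^{-1/2}$, which is the solution of the Kato square root problem for $L_w$ obtained in \cite{CR13}.

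For the weak-type bound, fix $\az\in(0,\fz)$ and $f\in L^2(w,\rn)\cap L^p(w,\rn)$ (to be removed at the end by a density argument using Lemma \ref{lem fatou}). Perform a Calder\'on--Zygmund decomposition of $f$ at height $\az$ with respect to the doubling measure $w(x)\,dx$, producing a family of balls $\{B_i\}_i$ of radii $r_i$ and a splitting $f = g + \sum_i b_i$ such that $|g|\le C\az$ a.e., $\|g\|_{L^2(w,\rn)}^2\ls\az^{2-p}\|f\|_{L^p(w,\rn)}^p$, each $b_i$ is supported in $B_i$ with $\int_{B_i}|b_i|^p w\,dx\ls \az^p w(B_i)$, and $\sum_i w(B_i)\ls \az^{-p}\|f\|_{L^p(w,\rn)}^p$. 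The level set $\{|\nabla L_w^{-1/2}(g)|>\az/2\}$ is controlled by Chebyshev and the $L^2(w,\rn)$-bound on $\nabla L_w^{-1/2}$, giving the required $\az^{-p}\|f\|_{L^p(w,\rn)}^p$ decay after interpolating $\|g\|_{L^2(w,\rn)}^2\le \|g\|_{L^\fz(\rn)}^{2-p}\|g\|_{L^p(w,\rn)}^p$.

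For the bad part, discard the exceptional set $\bigcup_i 4B_i$ (whose weighted measure is $\ls\az^{-p}\|f\|_{L^p(w,\rn)}^p$ by doubling), fix an integer $m$ (to be chosen large depending only on $n$), set $t_i:=r_i^2$, and split
\begin{eqnarray*}
\nabla L_w^{-1/2}(b_i)
=\nabla L_w^{-1/2}\lf[I-(I-e^{-t_iL_w})^m\r](b_i)
+\nabla L_w^{-1/2}(I-e^{-t_iL_w})^m(b_i).
\end{eqnarray*}
On each annulus $U_k(B_i)$ with $k\geq 2$, I would estimate both pieces using the off-diagonal bounds of Lemmas \ref{lem 5.1} and \ref{lem 5.2}, which supply Gaussian (resp.\ polynomial) decay in $2^k$ for the semigroup applied at scale $t_i$. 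The crucial reduction is from the $L^p(w,\rn)$-data $b_i$ to the $L^2(w,\rn)$-inputs required by those off-diagonal estimates; this is achieved by representing $b_i$ as $\frac{1}{w}\div(w\vec{F}_i)$ for a suitable vector-valued $\vec{F}_i$ and invoking the local weighted Poincar\'e inequality of Lemma \ref{lem poincare} with exponent pair $(p,k_0)$ where $k_0:=(n+1)/n$, so that $k_0 p=2$. This is exactly where the threshold $p=\frac{2n}{n+1}$ enters: it is the smallest exponent for which Poincar\'e self-improves to the $L^2(w,\rn)$ scale on which the semigroup off-diagonal estimates are available, and it ties together Lemmas \ref{lem 5.1}, \ref{lem 5.2} and \ref{lem poincare}.

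The main technical obstacle will be to organize the annular sums so that the geometric decay produced by Lemma \ref{lem 5.2} dominates the $2^{nk/p}$ growth coming from $w(2^kB_i)$; this forces one to take $m$ large (any $m>n/(2p)+1/2$ should suffice) and to treat separately the near-diagonal terms $k\in\{0,1\}$, where Kato ($L^2$) boundedness is used directly. Once the weak-type $(p,p)$ inequality is established on a dense subset of $L^p(w,\rn)$, Lemma \ref{lem fatou} extends it to all of $L^p(w,\rn)$. Finally, Marcinkiewicz interpolation on the weighted measure space $(\rn,w(x)\,dx)$ between the weak-$(p,p)$ and strong-$(2,2)$ bounds yields the strong $(q,q)$ boundedness of $\nabla L_w^{-1/2}$ for every $q\in(p,2)$, with the endpoint $q=2$ already in hand.
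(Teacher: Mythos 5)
Your high-level architecture (Calder\'on--Zygmund decomposition at height $\az$, Chebyshev plus $L^2(w,\rn)$ boundedness for the good part, the splitting $T=T_k+(T-T_k)$ with $T_k:=\nabla L_w^{-1/2}(I-e^{-t_kL_w})^m$ for the bad part, annular off-diagonal decay from Lemmas \ref{lem 5.1} and \ref{lem 5.2}, and Marcinkiewicz interpolation at the end) matches the paper and Hofmann--Martell. But the step you flag as ``the crucial reduction'' from $L^p(w,\rn)$-data $b_i$ to the $L^2(w,\rn)$-inputs of the off-diagonal estimates is where your route breaks down, and it is precisely the point at which the paper does something different.

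The paper uses the ordinary (Stein) weighted Calder\'on--Zygmund decomposition of the function $f$, so each $b_k$ is supported in $B_k$ with $\|b_k\|_{L^p(w,B_k)}\ls\az[w(B_k)]^{1/p}$ and, crucially, has mean zero with respect to $w\,dx$. It then dualizes: the sum $\sum_k e^{-jt_kL_w}(b_k)$ (and likewise $\sum_k T_k(b_k)$ off $\cup B_k^\ast$) is tested against an $L^2(w,\rn)$ function $h$, the mean-zero property is used to subtract the ball-average of $e^{-jt_kL_w^\ast}(h_{(l,k)})$, and Lemma \ref{lem poincare} is applied \emph{at exponent $2$ to the dual object} -- giving $\|u-u_{B_k}\|_{L^{p'}(w,B_k)}\ls r_{B_k}[w(B_k)]^{1/p'-1/2}\|\nabla u\|_{L^2(w,B_k)}$ with $p'=2n/(n-1)=2k$ and $k=n/(n-1)$. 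This uses only $w\in A_2(\rn)$, which is exactly the hypothesis of the theorem. Your proposal instead tries to work on the primal side: write $b_i=\frac1w\div(w\vec F_i)$ and apply Lemma \ref{lem poincare} (really you mean Lemma \ref{lem imbedding}) at exponent $p$ to upgrade to $L^2(w,\rn)$. This has two concrete problems. First, Lemmas \ref{lem poincare} and \ref{lem imbedding} at exponent $p$ require $w\in A_p(\rn)$; since $p=\frac{2n}{n+1}<2$ and $A_p(\rn)\subsetneq A_2(\rn)$, this is strictly stronger than the stated hypothesis $w\in A_2(\rn)$, so the argument would not prove the theorem as stated. Second, the function-level Calder\'on--Zygmund pieces $b_i$ come with no gradient control and no canonical weighted-divergence representation; constructing $\vec F_i\in L^2(w,B_i;\cc^n)$ with $b_i=\frac1w\div(w\vec F_i)$ is a nontrivial divergence-equation problem that you neither carry out nor reduce to a cited result. (The divergence-form input of \eqref{eq 5.2} and of Lemma \ref{lem 5.2} is used in the paper only on the dual side, where $\vec h_{(l,k)}$ is already given.) You appear to be importing the Sobolev-type Calder\'on--Zygmund decomposition and the Lemma \ref{lem imbedding} self-improvement from the proof of Proposition \ref{thm main}, where $w\in A_p(\rn)$ \emph{is} assumed and gradient control on $b_i$ is available; that machinery does not transfer to Theorem \ref{pro 4}. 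Replacing your primal reduction by the mean-zero/dual Poincar\'e argument -- and noting the decay requirement reduces to $m>\frac{n-1}{2}$ -- closes the gap.
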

\begin{proof}
To prove Theorem \ref{pro 4}, by the Marcinkiewicz interpolation theorem
and a density argument,
we only need to show, for any $f\in L^2(w,\,\rn)\cap L^p(w,\,\rn)$,
\eqref{eq pro 4} holds true.
Indeed, since $L^2(w,\,\rn)\cap L^p(w,\,\rn)$ is dense in $L^p(w,\,\rn)$,
we know that, for any $f\in L^p(w,\,\rn)$, there exists a family of functions,
$\{f_k\}_{k\in\nn}\st L^2(w,\,\rn)\cap L^p(w,\,\rn)$, such that
$\lim_{k\to\fz}\|f_k-f\|_{L^p(w,\,\rn)}=0$.
Thus, $\{f_k\}_{k\in\nn}$ is a Cauchy sequence in $L^p(w,\,\rn)$.
By \eqref{eq pro 4}, we know that $\{\nabla L_w^{-1/2}(f_k)\}_{k\in\nn}$ is
a Cauchy sequence in measure $w(x)\,dx$. From \cite[Theorem 1.1.13]{Gra14},
it follows that there exists a subsequence of $\{\nabla L_w^{-1/2}(f_k)\}_{k\in\nn}$
(without loss of generality, we may use the same notation as the original sequence)
such that, for almost every $x\in\rn$, $\lim_{k\to\fz}\nabla L_w^{-1/2}(f_k)(x)$ exists.
For almost every $x\in\rn$, let
$$\nabla L_w^{-1/2}(f)(x):=\lim_{k\to\fz}\nabla L_w^{-1/2}(f_k)(x).$$
It is easy to see that $\nabla L_w^{-1/2}(f)$ is well defined.
By Lemma \ref{lem fatou}, we further find that,
for any $f\in L^p(w,\,\rn)$, \eqref{eq pro 4} holds true.

Next, we prove that, for any $f\in L^2(w,\,\rn)\cap L^p(w,\,\rn)$, \eqref{eq pro 4}
holds true. To this end,
for any $f\in L^1_{\rm loc}(w,\,\rn)$ and $x\in\rn$,
let
\begin{eqnarray}\label{eq hl}
M_w(f)(x):=\sup_{B\ni x}\frac{1}{w(B)}\int_B |f(y)|w(y)\,dy,
\end{eqnarray}
where the supremum is taken over all balls containing $x$.
For any $f\in L^2(w,\,\rn)\cap L^p(w,\,\rn)$,
by the generalized Calder\'{o}n-Zygmund decomposition
\cite[p.\,17, Theorem 2]{S93} and its proof, we know that
there exist positive constants $C$ and $N$ such that, for any $\az\in(0,\,\fz)$,
there exist a collection of balls, $\{B_k\}_{k=1}^\fz:=\{B(x_k,\, r_k)\}_{k=1}^\fz$ of $\rn$
with $x_B\in\rn$ and $r_B\in(0,\,\fz)$,
a family $\{b_k\}_{k=1}^\fz$ of functions and
an almost everywhere bounded function $g$ such that the following properties hold true:
\begin{eqnarray}\label{eq 4.2x}
f(x)=g(x)+\sum_{k=1}^\fz b_k(x)\ \ \text{for almost every}\ x\in\rn;
\end{eqnarray}
\begin{eqnarray}\label{eq 4.2}
|g(x)|\le C\az\ \ \text{for almost every}\ x\in\rn;
\end{eqnarray}
\begin{eqnarray}\label{eq 4.3}
\qquad\supp b_k\st B_k,\ \ \int_{B_k}b_k(x)w(x)\,dx=0\ \ and\
\lf[\frac{1}{w(B_k)}\int_{B_k}|b_k(x)|^pw(x)\,dx\r]^{\frac1p}\le C\az;
\end{eqnarray}
\begin{eqnarray}\label{eq 4.4}
\sum_{k=1}^\fz w(B_k)\le C\az^{-p}\int_\rn|f(x)|^pw(x)\,dx;
\end{eqnarray}
\begin{eqnarray}\label{eq 4.5}
\sum_{k=1}^\fz \chi_{B_k}(x)\le N.
\end{eqnarray}
Let $b:=\sum_{k=1}^\fz b_k$. By \eqref{eq 4.2x}, we have
\begin{eqnarray}\label{eq 4.x0}
&&w\lf(\lf\{x\in\rn:\ \lf|\nabla L_w^{-1/2}(f)(x)\r|>3\az\r\}\r)\\
&&\hs\le w\lf(\lf\{x\in\rn:\ \lf|\nabla L_w^{-1/2}(g)(x)\r|>\az\r\}\r)\noz\\
&&\hs\hs+w\lf(\lf\{x\in\rn:\ \lf|\nabla L_w^{-1/2}(b)(x)\r|>2\az\r\}\r)
=:{\rm I}+{\rm II}.\noz
\end{eqnarray}
We first estimate {\rm I}.
By the fact that $\nabla L_w^{-1/2}$ is bounded on $L^2(w,\,\rn)$ (see \cite[Theorem 1.1]{CR13}),
$p=\frac{2n}{n+1}<2$, \eqref{eq 4.2}, \eqref{eq 4.3}, \eqref{eq 4.4}
and \eqref{eq 4.5}, we see that
\begin{eqnarray}\label{eq 4.x1}
{\rm I}
&&\le \frac{1}{\az^2}\int_\rn \lf|\nabla L_w^{-1/2}(g)(x)\r|^2w(x)\,dx
\ls \frac{1}{\az^2}\int_\rn |g(x)|^2w(x)\,dx\\
&&\ls \frac{1}{\az^p}\int_\rn |g(x)|^pw(x)\,dx\noz\\
&&\ls \frac{1}{\az^p}\lf\{\int_{\rn}|f(x)|^p w(x)\,dx
+ \sum_{k=1}^\fz \int_{B_k}|b_k(x)|^p w(x)\,dx\r\}\noz\\
&&\ls\frac{1}{\az^p}\int_\rn |f(x)|^pw(x)\,dx.\noz
\end{eqnarray}

Next, we prove
\begin{eqnarray}\label{eq 4.6}
{\rm II}
:=w\lf(\lf\{x\in\rn:\ \lf|\nabla L_w^{-1/2}(b)(x)\r|>2\az\r\}\r)
\ls \frac{1}{\az^p}\int_\rn |f(x)|^pw(x)\,dx.
\end{eqnarray}
We claim that, to prove \eqref{eq 4.6}, it suffices to show that
there exists a positive constant $C_{(w,\,p)}$, depending on $w$ and $p$,
such that, for any $\az\in(0,\,\fz)$ and $N\in\nn$,
\begin{eqnarray}\label{eq 4.7}
\qquad w\lf(\lf\{x\in\rn:\ \lf|\nabla L_w^{-1/2}\lf(\sum_{k=1}^N b_k\r)(x)\r|>2\az\r\}\r)
\le \frac{C_{(w,\,p)}}{\az^p}\int_\rn |f(x)|^pw(x)\,dx.
\end{eqnarray}
Indeed, for any $f\in L^2(w,\,\rn)\cap L^p(w,\,\rn)$, by the proof of
\cite[p.\,17, Theorem 2]{S93}, it is easy to see that
$b=\lim_{N\to\fz}\sum_{k=1}^N b_k$ in $L^2(w,\,\rn)$.
Let $S_N:=\sum_{k=1}^N b_k$. By the fact that
$\nabla L_w^{-1/2}$ is bounded on $L^2(w,\,\rn)$, we know that there
exists a subsequence of $\{\nabla L_w^{-1/2}(S_N)\}_{N=1}^\fz$
(without loss of generality, we may use the same notation as the original sequence)
such that, for almost every $x\in\rn$,
$$\lim_{N\to\fz}\nabla L_w^{-1/2}(S_N)(x)=\nabla L_w^{-1/2}(b)(x).$$
By this, Lemma \ref{lem fatou} and \eqref{eq 4.7}, we see that
\begin{eqnarray*}
&&\az^p w\lf(\lf\{x\in\rn:\ \lf|\nabla L_w^{-1/2}(b)(x)\r|>2\az\r\}\r)\\
&&\hs\le \lf\|\lf|\nabla L_w^{-1/2}(b)\r|\r\|_{L^{p,\fz}(w,\,\rn)}^p
=\lf\|\lim_{N\to\fz}\lf|\nabla L_w^{-1/2}(S_N)\r|\r\|_{L^{p,\fz}(w,\,\rn)}^p\\
&&\le \liminf_{N\to\fz}\lf\|\nabla L_w^{-1/2}(S_N)\r\|_{L^{p,\fz}(w,\,\rn)}^p
\ls\int_\rn |f(x)|^pw(x)\,dx,
\end{eqnarray*}
which implies \eqref{eq 4.6}.

Next, we prove \eqref{eq 4.7}. Let $T:=\nabla L_w^{-1/2}$.
Fix some $m\in\nn$ satisfying $m>\frac{n-1}{2}$.
For any $k\in\nn$,
let $T_k:=T(I-e^{-t_k L_w})^m$ and $B_k^\ast:=2B_k$
, where $t_k:=r_k^2$ and $r_k\in (0,\,\fz)$ denotes the
radius of $B_k$.
For any $N\in\nn$ and almost every $x\in\rn$, we write
\begin{eqnarray*}
T(S_N)(x)
=\sum_{k=1}^N T(b_k)(x)=\sum_{k=1}^N T_k(b_k)(x)+ \sum_{k=1}^N (T-T_k)(b_k)(x).
\end{eqnarray*}
Hence,
\begin{eqnarray}\label{eq 4.x2}
&&w\lf(\lf\{x\in\rn:\ \lf|T(S_N)(x)\r|>2\az\r\}\r)\\
&&\hs\le w\lf(\lf\{x\in\rn:\ \lf|\sum_{k=1}^N T_k(b_k)(x)\r|>\az\r\}\r)\noz\\
&&\hs\hs+w\lf(\lf\{x\in\rn:\ \lf|\sum_{k=1}^N (T-T_k)(b_k)(x)\r|>\az\r\}\r)\noz\\
&&\hs\le w\lf(\bigcup_{k=1}^N B_k^\ast\r)
+w\lf(\lf\{x\in\lf(\bigcup_{k=1}^N B_k^\ast\r)^\com:\ \lf|\sum_{k=1}^N T_k(b_k)(x)\r|>\az\r\}\r)\noz\\
&&\hs\hs+w\lf(\lf\{x\in\rn:\ \lf|\sum_{k=1}^N (T-T_k)(b_k)(x)\r|>\az\r\}\r)\noz\\
&&\hs=:{\rm II}_1+{\rm II}_2+{\rm II}_3.\noz
\end{eqnarray}

We first estimate ${\rm II}_1$. By the fact that $w\in A_2(\rn)$, Lemma \ref{lem Ap-2}
and \eqref{eq 4.4}, we see that
\begin{eqnarray}\label{eq 4.x3}
{\rm II}_1
\le \sum_{k=1}^N w(B_k^\ast)\ls \sum_{k=1}^N w(B_k)\ls\frac{1}{\az^p}
\int_\rn |f(x)|^pw(x)\,dx.
\end{eqnarray}

For ${\rm II}_3$, by the Chebyshev inequality and
the fact that $T$ is bounded on $L^2(w,\,\rn)$, we have
\begin{eqnarray*}
{\rm II}_3
&&=w\lf(\lf\{x\in\rn:\ \lf|T\lf(\sum_{k=1}^N\lf[I-(I-e^{-t_k L_w})^m\r](b_k)\r)(x)\r|>\az\r\}\r)\\
&&\ls\frac{1}{\az^2}\lf\|\sum_{k=1}^N\lf(I-(I-e^{-t_k L_w})^m\r)(b_k)\r\|^2_{L^2(w,\,\rn)}.
\end{eqnarray*}
From this and the fact that
\begin{eqnarray*}
I-(I-e^{-t_k L_w})^m=I-\sum_{j=0}^m\binom{m}{j}e^{-jt_kL_w}
=-\sum_{j=1}^m\binom{m}{j}e^{-jt_kL_w},
\end{eqnarray*}
where $\binom{m}{j}$ denotes the binomial coefficients,
it follows that
\begin{eqnarray}\label{eq 4.8}
{\rm II}_3\ls\frac{1}{\az^2}\sum_{j=1}^m
\lf\|\sum_{k=1}^N e^{-jt_kL_w}(b_k)\r\|_{L^2(w,\,\rn)}^2.
\end{eqnarray}
For any $k,\,l\in\nn$, let $S(l,\,k):=2^{l+1}B_k\setminus 2^lB_k$ and
$S(0,\,k):=2B_k$.
For any $h\in L^2(w,\,\rn)$ with $\|h\|_{L^2(w,\,\rn)}=1$,
let $h_{(l,\,k)}:=h\chi_{S(l,\,k)}$.
Then, for any $j\in\{1,\,\ldots,\,m\}$, from \eqref{eq 4.3},
the H\"{o}lder inequality, Lemmas \ref{lem poincare} and \ref{lem 5.1}, we deduce that
\begin{eqnarray*}
&&\lf|\int_\rn \sum_{k=1}^N e^{-jt_kL_w}(b_k)(x)h(x)w(x)\,dx\r|\\
&&\hs=\lf|\sum_{k=1}^N\sum_{l=0}^\fz\int_{B_k}b_k(x) e^{-jt_kL^\ast_w}(h_{(l,\,k)})(x)w(x)\,dx\r|\\
&&\hs=\lf|\sum_{k=1}^N\sum_{l=0}^\fz\int_{B_k}b_k(x)\lf[
e^{-jt_kL^\ast_w}(h_{(l,\,k)})(x)-\lf(e^{-jt_kL^\ast_w}(h_{(l,\,k)})\r)_{B_k}\r]
w(x)\,dx\r|\\
&&\hs\le\sum_{k=1}^N\sum_{l=0}^\fz\|b_k\|_{L^p(w,\,B_k)}
\lf\|e^{-jt_kL^\ast_w}(h_{(l,\,k)})-\lf(e^{-jt_kL^\ast_w}(h_{(l,\,k)})\r)_{B_k}\r\|_{L^{p'}(w,\,B_k)}\\
&&\hs\ls\az\sum_{k=1}^N\sum_{l=0}^\fz[w(B_k)]^{\frac1p}
[w(B_k)]^{\frac1{p'}-\frac12}\lf\|\sqrt{t_k}\nabla e^{-jt_kL^\ast_w}(h_{(l,\,k)})\r\|_{L^2(w,\,\rn)}\\
&&\hs\ls\az\sum_{k=1}^N\sum_{l=0}^\fz[w(B_k)]^{\frac12}
e^{-\frac{[d(S(l,\,k),\,B_k)]^2}{cjt_k}}\|h_{(l,\,k)}\|_{L^2(w,\,S(l,\,k))}\\
&&\hs\ls\az\sum_{k=1}^N\sum_{l=0}^\fz[w(B_k)]^{\frac12} e^{-c4^l}\|h\|_{L^2(w,\,S(l,\,k))},
\end{eqnarray*}
where $(e^{-jt_kL_w^\ast}(h_{(l,k)}))_{B_k}$ is as in \eqref{eq mean}
with $u$ replaced by $e^{-jt_kL_w^\ast}(h_{(l,k)})$ and $B$ replaced by $B_k$.
By this, Lemma \ref{lem Ap-2}, the Kolmogrov lemma
(see, for example, \cite[Lemma 5.16]{Du01})
and the fact that $\|h\|_{L^2(w,\,\rn)}=1$,
we have
\begin{eqnarray*}
&&\lf|\int_\rn \sum_{k=1}^N e^{-jt_kL_w}(b_k)(x)h(x)w(x)\,dx\r|\\
&&\hs\ls\az\sum_{k=1}^N\sum_{l=0}^\fz[w(B_k)]^{\frac12} e^{-c4^l}\|h\|_{L^2(w,\,S(l,\,k))}\\
&&\hs\ls\az\sum_{k=1}^N\sum_{l=0}^\fz[w(B_k)]^{\frac12} e^{-c4^l}
\lf[w\lf(2^{l+1}B_k\r)\r]^{\frac12}\lf[\frac{1}{w\lf(2^{l+1}B_k\r)}
\int_{2^{l+1}B_k}|h(x)|^2w(x)\,dx\r]^{\frac12}\\
&&\hs\ls\az\sum_{k=1}^Nw(B_k)\essinf_{y\in B_k}\lf[M_w(|h|^2)(y)\r]^{\frac12}
\sum_{l=0}^\fz e^{-c4^l}2^{nl}\\
&&\hs\ls\az\sum_{k=1}^N\int_{B_k}\essinf_{y\in B_k}\lf[M_w(|h|^2)(y)\r]^{\frac12}w(x)\,dx\\
&&\hs\ls\az\int_{\cup_{k=1}^\fz B_k}\lf[M_w(|h|^2)(x)\r]^{\frac12}w(x)\,dx\\
&&\hs\ls\az\lf[w\lf(\bigcup_{k=1}^\fz B_k\r)\r]^{\frac12}\lf\||h|^2\r\|^{\frac12}_{L^1(w,\,\rn)}
\ls\az\lf[w\lf(\bigcup_{k=1}^\fz B_k\r)\r]^{\frac12},
\end{eqnarray*}
where $M_w$ is as in \eqref{eq hl}.
This, together with \eqref{eq 4.4} and \eqref{eq 4.5}, implies that,
for any $j\in\{1,\,\ldots,\,m\}$,
\begin{eqnarray*}
\lf\|\sum_{k=1}^N e^{-jt_kL_w}(b_k)\r\|_{L^2(w,\,\rn)}^2
\ls\frac{1}{\az^{p-2}}\int_\rn|f(x)|^pw(x)\,dx.
\end{eqnarray*}
By this and \eqref{eq 4.8}, we know that
\begin{eqnarray}\label{eq 4.0}
{\rm II}_3\ls \frac{1}{\az^{p}}\int_\rn|f(x)|^pw(x)\,dx.
\end{eqnarray}

Next, we estimate ${\rm II}_2$. For any $N\in\nn$,
let $E_N^\ast:=(\cup_{k=1}^N B_k^\ast)^\com$. Then it is easy to see that
\begin{eqnarray}\label{eq 4.9}
{\rm II}_2
\le\frac{1}{\az^2}\lf\|\sum_{k=1}^N T_k(b_k)\r\|^2_{L^2(w,\,E^\ast_N)}.
\end{eqnarray}
For any $k\in\{1,\,\ldots,\,N\}$, let $T_k^\ast$ be the adjoint operator of $T_k$,
namely,
$$T_k^\ast=\lf(\nabla L_w^{-1/2}\lf(I-e^{-tL_w}\r)^m\r)^\ast.$$
For any $\vec{h}:=(h_1,\,\ldots,\,h_n)$
satisfying $\|\vec{h}\|_{L^2(w,\,\rn)}:=\|[\sum_i^n|h_i|^2]^{1/2}\|_{L^2(w,\,\rn)}=1$ with
$\supp h_i\st E_N^\ast$, $i\in\{1,\,\ldots,\,n\}$,
any $l\in\zz_+$ and $k\in\{1,\,\ldots,\,N\}$,
let $\vec{h}_{(l,\,k)}:=\vec{h}\chi_{S(l,\,k)}$.
Since, for any $k\in\{1,\,\ldots,\,m\}$ and $i\in\{1,\,\ldots,\,n\}$,
$\supp h_i\st E_N^\ast$ and $E_N^\ast\st (B_k^\ast)^\com$, we see that
$\vec{h}_{(0,\,k)}=\vec{0}$.
From this, the H\"{o}lder inequality, Lemmas \ref{lem poincare}, \ref{lem 5.2}
and \ref{lem Ap-2}, the fact that $m>\frac{n-1}{2}$,
the Kolmogrov lemma (see, for example, \cite[Lemma 5.16]{Du01})
and the fact that $\|\vec{h}\|_{L^2(w,\,\rn)}=1$,
we deduce that
\begin{eqnarray*}
&&\lf|\int_\rn\lf[\sum_{k=1}^N T_k(b_k)(x)\r]\cdot\vec{h}(x)w(x)\,dx\r|\\
&&\hs=\lf|\sum_{k=1}^N\sum_{l=1}^\fz\int_\rn [T_k(b_k)(x)]\cdot\vec{h}_{(l,\,k)}(x)w(x)\,dx\r|\\
&&\hs=\lf|\sum_{k=1}^N\sum_{l=1}^\fz\int_{B_k}b_k(x)T_k^\ast(\vec{h}_{(l,\,k)})(x)w(x)\,dx\r|\\
&&\hs=\lf|\sum_{k=1}^N\sum_{l=1}^\fz\int_{B_k}b_k(x)\lf[T_k^\ast(\vec{h}_{(l,\,k)})(x)
-\lf(T_k^\ast(\vec{h}_{(l,\,k)})\r)_{B_k}\r]w(x)\,dx\r|\\
&&\hs\le\sum_{k=1}^N\sum_{l=1}^\fz\|b_k\|_{L^p(w,\,B_k)}
\lf\|T_k^\ast\lf(\vec{h}_{(l,\,k)}\r)-\lf(T_k^\ast\lf(\vec{h}_{(l,\,k)}\r)\r)_{B_k}\r\|
_{L^{p'}(w,\,\rn)}\\
&&\hs\ls\az\sum_{k=1}^N\sum_{l=1}^\fz[w(B_k)]^{\frac1p}[w(B_k)]^{\frac1{p'}-\frac12}
\lf\|\sqrt{t_k}\nabla T_k^\ast\lf(\vec{h}_{(l,\,k)}\r)\r\|_{L^2(w,\,B_k)}\\
&&\hs\ls\az\sum_{k=1}^N\sum_{l=1}^\fz[w(B_k)]^{\frac12}
\lf(\frac{[d(B_k,\,S(l,\,k))]^2}{t_k}\r)^{-(m+\frac12)}\|\vec{h}\|_{L^2(w,\,S(l,\,k))}\\
&&\hs\ls\az\sum_{k=1}^N\sum_{l=1}^\fz[w(B_k)]^{\frac12}
2^{-2l(m+\frac12)}\lf[w(2^{l+1}B_k)\r]^{\frac12}\lf[\frac{1}{w(2^{l+1}B_k)}
\int_{2^{l+1}B_k}|\vec{h}(y)|^2w(y)\,dy\r]^{\frac12}\\
&&\hs\ls\az\sum_{k=1}^N w(B_k)\essinf_{y\in B_k}\lf[M_w(|\vec{h}|^2)(y)\r]^{\frac12}
\sum_{l=1}^\fz 2^{-2l(m+\frac12-\frac{n}{2})}\\
&&\hs\ls\az\int_{\cup_{k=1}^\fz B_k}\lf[M_w(|\vec{h}|^2)(x)\r]^{\frac12}w(x)\,dx\\
&&\hs\ls\az\lf[w\lf(\bigcup_{k=1}^\fz B_k\r)\r]^{\frac12}\lf\||\vec{h}|^2\r\|^{\frac12}_{L^1(w,\,\rn)}
\ls\az\lf[w\lf(\bigcup_{k=1}^\fz B_k\r)\r]^{\frac12},
\end{eqnarray*}
where $M_w$ is as in \eqref{eq hl} and
$(T_k^\ast\lf(\vec{h}_{(l,\,k)}\r))_{B_k}$ is as in \eqref{eq mean}
with $u$ and $B$ replaced by $T_k^\ast\lf(\vec{h}_{(l,\,k)}\r)$ and $B_k$, respectively.
This, together with \eqref{eq 4.4} and \eqref{eq 4.5}, implies that
\begin{eqnarray*}
\lf\|\sum_{k=1}^N T_k(b_k)\r\|^2_{L^2(w,\,E^\ast_N)}
\ls \frac{1}{\az^{p-2}}\int_\rn|f(x)|^pw(x)\,dx.
\end{eqnarray*}
Combining this and \eqref{eq 4.9}, we have
$${\rm II}_2\ls\frac{1}{\az^{p}}\int_\rn|f(x)|^pw(x)\,dx.$$
This, together with \eqref{eq 4.0}, \eqref{eq 4.x3} and \eqref{eq 4.x2}, implies \eqref{eq 4.7}.
Hence, \eqref{eq 4.6} holds true. Combining \eqref{eq 4.6} and \eqref{eq 4.x1},
we then complete the proof of Theorem \ref{pro 4}.
\end{proof}

We are now in a position to prove Proposition \ref{thm main1}.
\begin{proof}[Proof of Proposition \ref{thm main1}]
We first prove (i).
Indeed, from Theorem \ref{pro 4}, it follows that, for any $f\in L^p(w,\,\rn)$,
\begin{eqnarray*}
\lf\|\nabla L_w^{-1/2}(f)\r\|_{L^p(w,\,\rn)}\ls \|f\|_{L^p(w,\,\rn)}.
\end{eqnarray*}
This, together with Proposition \ref{thm equi}, implies that,
for any $p\in (\frac{2n}{n+1},\,2]$ and $f\in H_{L_w}^p(\rn)$,
\begin{eqnarray}\label{eq 4.1}
\lf\|\nabla L_{w}^{-1/2}(f)\r\|_{L^p(w,\,\rn)}\ls \|f\|_{H_{L_w}^p(\rn)}.
\end{eqnarray}

Next, we prove (ii).
From \cite[Theorem 1.6]{ZCJY14}, we deduce that, for any $w\in A_q(\rn)$
with $q\in[1,\,1+\frac1n)$ and $f\in H_{L_w}^1(\rn)$,
\begin{eqnarray*}
\lf\|\nabla L_{w}^{-1/2}(f)\r\|_{L^1(w,\,\rn)}\ls \|f\|_{H_{L_w}^1(\rn)}.
\end{eqnarray*}
Combining this, \eqref{eq 4.1}, Lemma \ref{lem inter-Hardy} and the fact that
$$[L^1(w,\,\rn),\,L^{p_0}(w,\,\rn)]_\theta= L^p(w,\,\rn)$$
where $\theta\in(0,\,1)$,
$1/p=(1-\theta)+\theta/{p_0}$ and $p_0\in(1,\,\fz)$ (see, for example, \cite[Theorem 5.5.1]{BL76}),
by the well-known properties of interpolation spaces (see, for example, \cite[Theorem 4.1.2]{BL76}),
we obtain \eqref{eq 4.1}
in case $p\in [1,\,\frac{2n}{n+1}]$.
This finishes the proof of Proposition \ref{thm main1}.
\end{proof}

\section{Proof of Proposition \ref{thm main}}\label{s5}
\hskip\parindent
To prove Proposition \ref{thm main}, we need the following local
weighted Sobolev embedding theorem (see \cite[Theorem (1.2)]{FKS82}).
\begin{lem}[\cite{FKS82}]\label{lem imbedding}
Let $n\geq 2$.
For any given $p\in (1,\fz)$ and $w\in A_p(\rn)$, there exist positive constants
$C$ and $\delta$ such that, for any number $k_0\in [1,\,\frac{n}{n-1}+\delta]$,
any ball $B\equiv B(x_B,r_B)$ of $\rn$
with $x_B\in\rn$ and $r_B\in (0,\fz)$, and any $u\in C^\fz_c(B)$,
\begin{equation*}
\lf[\frac{1}{w(B)}\int_B|u(x)|^{k_0p}w(x)\,dx\r]^{\frac1{k_0p}}\le Cr_B
\lf[\frac{1}{w(B)}\int_B|\nab u(x)|^{p}w(x)\,dx\r]^{\frac1p}.
\end{equation*}
\end{lem}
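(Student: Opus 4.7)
The plan is to derive Lemma \ref{lem imbedding} from the weighted Poincar\'e inequality (Lemma \ref{lem poincare}) via a standard dilation-and-absorption argument that uses the reverse H\"older decay supplied by Lemma \ref{lem Ap-2} to dominate the weighted average $u_{MB}$ on a sufficiently large dilation $MB$ of $B$.

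Fix $u\in C_c^\fz(B)$, extended by zero to all of $\rn$. Let $M>1$ be a dilation factor to be chosen, and let $k_0\in[1,\frac{n}{n-1}+\dz_0]$ with $\dz_0$ strictly smaller than the exponent $\dz$ of Lemma \ref{lem poincare}. Since $u$ is Lipschitz continuous on $\ov{MB}$, applying Lemma \ref{lem poincare} on $MB$ yields
\[
\lf[\frac{1}{w(MB)}\int_{MB}\lf|u-u_{MB}\r|^{k_0p}w\,dx\r]^{1/(k_0p)}
\le C\,Mr_B\lf[\frac{1}{w(MB)}\int_{MB}|\nabla u|^p w\,dx\r]^{1/p},
\]
where $u_{MB}:=\frac{1}{w(MB)}\int_{MB}u\,w\,dx=\frac{1}{w(MB)}\int_B u\,w\,dx$ (the second equality using $\supp u\st B$).

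Next, I would combine the pointwise bound $|u|^{k_0p}\le 2^{k_0p-1}(|u-u_{MB}|^{k_0p}+|u_{MB}|^{k_0p})$ with the H\"older estimate
\[
w(B)\,|u_{MB}|^{k_0p}\le\lf(\frac{w(B)}{w(MB)}\r)^{k_0p}\int_B|u|^{k_0p}w\,dx
\]
to get
\[
\int_B|u|^{k_0p}w\,dx\le 2^{k_0p-1}\int_{MB}|u-u_{MB}|^{k_0p}w\,dx
+2^{k_0p-1}\lf(\frac{w(B)}{w(MB)}\r)^{k_0p}\int_B|u|^{k_0p}w\,dx.
\]
By Lemma \ref{lem Ap-2}, $w(B)/w(MB)\le C_w M^{-n(r-1)/r}$ for some $r>1$ depending only on $w$, so choosing $M$ large enough (depending only on $n$, $p$ and $[w]_{A_p}$) makes the last coefficient less than $1/2$, and the $u_{MB}$ term is absorbed into the left-hand side. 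Combining with the Poincar\'e inequality of the first display, and then using $\int_{MB}|\nabla u|^p w=\int_B|\nabla u|^p w$ together with $w(B)\le w(MB)$ to convert the $MB$-normalization back to a $B$-normalization, yields the desired inequality.

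The only delicate point is the uniform choice of $M$: for $k_0p$ ranging over the compact interval $[p,p(\frac{n}{n-1}+\dz_0)]$, one must arrange $2^{k_0p}(C_w M^{-n(r-1)/r})^{k_0p}<1$ by a single $M=M(n,p,[w]_{A_p})$. This reduces to $2C_w M^{-n(r-1)/r}<1$, i.e., $M>(2C_w)^{r/[n(r-1)]}$, and the reverse H\"older exponent $r$ from Lemma \ref{lem Ap-2} depends only on $w$, so the bookkeeping closes and the resulting constant in Lemma \ref{lem imbedding} depends only on $n$, $p$ and $[w]_{A_p}$.
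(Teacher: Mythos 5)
Your argument is correct, but it is worth noting that the paper does not prove this lemma at all: it is imported verbatim as \cite[Theorem (1.2)]{FKS82}, so there is no internal proof to compare against. What you supply is a genuine self-contained derivation of the Sobolev inequality from the weighted Poincar\'e inequality (Lemma \ref{lem poincare}, also quoted from \cite{FKS82}) by the standard dilation-and-absorption device, and the steps check out: the H\"older bound $w(B)|u_{MB}|^{k_0p}\le(w(B)/w(MB))^{k_0p}\int_B|u|^{k_0p}w\,dx$ is right, the reverse H\"older decay from Lemma \ref{lem Ap-2} does give $w(B)/w(MB)\le C M^{-n(r-1)/r}$, and your uniformity check $2CM^{-n(r-1)/r}<1$ correctly handles all $k_0$ in the compact range at once (the mismatch between the closed interval in Lemma \ref{lem imbedding} and the open one in Lemma \ref{lem poincare} is properly absorbed by taking $\dz_0<\dz$, which is legitimate since the lemma only asserts the existence of some $\dz>0$). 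Two small points you should make explicit: first, after absorption the left-hand side carries the normalization $\frac{1}{w(B)}=\frac{w(MB)}{w(B)}\cdot\frac{1}{w(MB)}$, so besides $w(B)\le w(MB)$ (used on the gradient side) you also need the opposite bound $w(MB)\le C(M)\,w(B)$, which follows from the lower estimate in Lemma \ref{lem Ap-2} (doubling of $A_p$ weights) and is harmless since $M$ is by then fixed; second, the constant inevitably depends on the reverse H\"older exponent and constant of $w$, not literally only on $[w]_{A_p}$, though these are controlled by $[w]_{A_p}$ and $n$, so the conclusion as stated in the lemma is unaffected. What your approach buys is independence from the external reference at the cost of a page of routine estimates; the paper's choice to cite \cite{FKS82} is the standard one.
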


We are now in a position to prove Proposition \ref{thm main}.
\begin{proof}[Proof of Proposition \ref{thm main}]
Let $p\in(\frac{2n}{n+1},\,\frac{2n}{n-1})$
and $w\in A_p(\rn)\cap A_2(\rn)$.
We first show that, for any given $p\in [2,\,\frac{2n}{n-1})$
and any $h\in L^2(w,\,\rn)\cap H_{L_w,{\rm Riesz}}^p(\rn)$,
\begin{eqnarray}\label{eq 5.x0}
\|h\|_{L^p(w,\,\rn)}\ls \lf\|\nabla L_w^{-1/2}(h)\r\|_{L^p(w,\,\rn)}.
\end{eqnarray}

Indeed, by \cite[Theorem 1.1]{CR13}, the fact that $(L_w^{1/2})^\ast=(L_w^\ast)^{1/2}$,
and an argument similar to that used in the proof of \cite[Lemma 2.2]{Au07},
we find that, for any $u,\,v\in H_0^1(w,\,\rn)$,
\begin{eqnarray*}
\int_\rn L_w^{1/2}(u)(x)\ov{(L_w^\ast)^{1/2}(v)(x)}w(x)\,dx
=\int_{\rn}[A(x)\nabla u(x)]\cdot \overline{\nabla v(x)}\,dx,
\end{eqnarray*}
where $A$ is the complex-valued matrix associated to $L_w$, which satisfies the degenerate elliptic
conditions \eqref{degenerate C1} and \eqref{degenerate C2}.
By this, we see that, for any $f\in H_0^1(w,\,\rn)$ and $g\in L^2(w,\,\rn)$,
\begin{eqnarray*}
\int_\rn L_w^{1/2}(f)(x)\ov{g(x)}w(x)\,dx
&&=\int_\rn L_w^{1/2}(f)(x)\ov{(L_w^\ast)^{1/2}\lf((L_w^\ast)^{-1/2}(g)\r)(x)}w(x)\,dx\\
&&=\int_\rn [A(x)\nabla f(x)]\cdot\ov{\nabla (L_w^\ast)^{-1/2}(g)(x)}\,dx.
\end{eqnarray*}
From this, \eqref{degenerate C1} and the H\"{o}lder inequality, it follows that,
for any given $p\in [2,\,\frac{2n}{n-1})$,
any $f\in H_0^1(w,\,\rn)$ and $g\in L^2(w,\,\rn)\cap L^{p'}(w,\,\rn)$,
\begin{eqnarray}\label{eq 5.x1}
\lf|\int_\rn L_w^{1/2}(f)(x)\ov{g(x)}w(x)\,dx\r|
&&\ls\int_\rn \lf|\nabla f(x)\r|\lf|\nabla (L_w^\ast)^{-1/2}(g)(x)\r|w(x)\,dx\\
&&\ls \|\nabla f\|_{L^p(w,\,\rn)}\lf\|\nabla (L_w^\ast)^{-1/2}(g)\r\|_{L^{p'}(w,\,\rn)}.\noz
\end{eqnarray}
Observing that $p'\in(\frac{2n}{n+1},\,2]$, by Theorem \ref{pro 4}, we see that
$$\lf\|\nabla (L_w^\ast)^{-1/2}(g)\r\|_{L^{p'}(w,\,\rn)}\ls \|g\|_{L^{p'}(w,\,\rn)}.$$
By this and \eqref{eq 5.x1}, we conclude that, for any given
$p\in [2,\,\frac{2n}{n-1})$ and any $f\in H_0^1(w,\,\rn)$,
\begin{eqnarray*}
\lf\|L_w^{1/2}(f)\r\|_{L^p(w,\,\rn)}\ls\|\nabla f\|_{L^p(w,\,\rn)},
\end{eqnarray*}
which further implies \eqref{eq 5.x0}.

Therefore, to complete the proof of Proposition \ref{thm main}, we only need to
prove \eqref{eq  5.x0} in case $p\in (\frac{2n}{n+1},\,2)$.
To this end, we first recall some well-known results.
Let $\mathcal{S}(\rn)$ denote the \emph{space of all Schwartz functions} and
$\mathcal{S}'(\rn)$ the \emph{space of all Schwartz distributions}.
For any $p\in [1,\,\fz)$ and $w\in A_p(\rn)$, the weighted Sobolev space
$\dot{W}^{1,p}(w,\,\rn)$ is defined by
\begin{eqnarray*}
\dot{W}^{1,p}(w,\,\rn):=\lf\{f\in\mathcal{S}'(\rn)/\mathbb{C}:\
\sum_{k=1}^n\|\partial_k f\|_{L^p(w,\,\rn)}<\fz\r\},
\end{eqnarray*}
where, for any $k\in\{1,\,\ldots,\,n\}$, $\partial_k f$ denotes the distributional
derivative of $f$.
From \cite[Theorem 2.8(ii) and Remark 4.5(i)]{Bui82}, it follows that,
for any $p\in[1,\,\fz)$ and $w\in A_p(\rn)$,
$$\dot{F}^{1,w}_{p,2}(\rn)=\dot{W}^{1,p}(w,\,\rn),$$
where $\dot{F}^{1,w}_{p,2}(\rn)$ denotes the homogeneous weighted Triebel sapces
(see \cite[p.\,583]{Bui82} for the definition).
By this and \cite[Theorem 6.2]{Bow08}, we conclude that, for any
$\theta\in(0,\,1)$, $1\le p_0\le p_1<\fz$ and $w\in A_{p_0}(\rn)$,
\begin{eqnarray}\label{eq 5.x3}
\lf[\dot{W}^{1,p_0}(w,\,\rn),\,\dot{W}^{1,p_1}(w,\,\rn)\r]_\theta
&&=\lf[\dot{F}^{1,w}_{p_0,2}(\rn),\,\dot{F}^{1,w}_{p_1,2}(\rn)\r]_\theta\\
&&=\dot{F}^{1,w}_{p,2}(\rn)=\dot{W}^{1,p}(w,\,\rn),\noz
\end{eqnarray}
where $1/p=(1-\theta)/{p_0}+\theta/{p_1}$.

To prove \eqref{eq  5.x0} in case $p\in (\frac{2n}{n+1},\,2)$ and $w\in A_p(\rn)$,
we claim that it suffices to show that,
for any $\az\in (0,\,\fz)$ and any $f\in \dot{W}^{1,p}(w,\,\rn)$,
\begin{eqnarray}\label{eq 5.xx}
w\lf(\lf\{x\in\rn:\ \lf|S_1\lf(\sqrt{L_w}(f)\r)(x)\r|>\az\r\}\r)
\ls\frac{1}{\az^p}\int_\rn |\nabla f(x)|^pw(x)\,dx,
\end{eqnarray}
where, for any $h\in L^2(w,\,\rn)$ and $x\in\rn$,
$$S_1(h)(x):=\lf[\iint_{\Gamma(x)}\lf|t\sqrt{L_w}e^{-t^2L_w}(h)(y)\r|^2w(y)\,
\frac{dy}{w(B(x,t))}\,\frac{dt}{t}\r]^{\frac12}.$$
Indeed, since $L_w$ has a bounded $H_\fz$ functional calculus in $L^2(w,\,\rn)$,
we know that
$S_1$ is bounded on $L^2(w,\,\rn)$ (see, for example, \cite[p.\,487]{BL11} or \cite{ADM96}).
Thus, by this and \cite[Theorem 1.1]{CR13},
we know that, for any $f\in\mathcal{S}(\rn)\st H_0^1(w,\,\rn)$,
\begin{eqnarray}\label{eq 5.x2}
\lf\|S_1\lf(\sqrt{L_w}(f)\r)\r\|_{L^2(w,\,\rn)}\ls\lf\|\sqrt{L_w}(f)\r\|_{L^2(w,\,\rn)}
\sim \|\nabla f\|_{L^2(w,\,\rn)}.
\end{eqnarray}
Since $\mathcal{S}(\rn)\cap \dot{F}^{1,w}_{2,2}(\rn)$ is dense in $\dot{F}^{1,w}_{2,2}(\rn)$
(see \cite[p.\,153]{Bow08}), we know that
$\mathcal{S}(\rn)$ is dense in $\dot{W}^{1,2}(w,\,\rn)$.
From this and a limiting procedure, we deduce that, for all $f\in \dot{W}^{1,2}(w,\,\rn)$,
\eqref{eq 5.x2} holds true.
By \cite[Theorem 5.3.1]{BL76}, we find that,
for any $p_0\in[1,\,\fz)$ and $\theta\in(0,\,1)$,
\begin{eqnarray*}
\lf[L^{p_0,\,\fz}(w,\,\rn),\,L^{2,\,\fz}(w,\,\rn)\r]_{\theta}=L^p(w,\,\rn),
\end{eqnarray*}
where $1/p=(1-\theta)/{p_0}+\theta/2$.
Combining this, \eqref{eq 5.x2}, \eqref{eq 5.x3} and \eqref{eq 5.xx},
by the well-known properties of interpolation spaces (see, for example, \cite[Theorem 4.1.2]{BL76}),
we see that, for any $q\in (p,\,2)$ with $p\in (\frac{2n}{n+1},\,2)$
and $f\in L^q(w,\,\rn)$,
\begin{eqnarray*}
\lf\|S_1\lf(\sqrt{L_w}(f)\r)\r\|_{L^q(w,\,\rn)}\ls \|\nabla f\|_{L^q(w,\,\rn)}.
\end{eqnarray*}
This, together with Proposition \ref{pro 1}, implies that, for all
$q\in(\frac{2n}{n+1},\,2)$ and $h\in L^2(w,\,\rn)\cap H_{L_w,{\rm Riesz}}^q(\rn)$,
\begin{eqnarray*}
\|h\|_{H_{L_w}^q(\rn)}\sim \lf\|S_1(h)\r\|_{L^q(w,\,\rn)}\ls
\lf\|\nabla L_w^{-1/2}(h)\r\|_{L^q(w,\,\rn)}.
\end{eqnarray*}

Next, we prove that, for any $f\in\mathcal{S}(\rn)$, \eqref{eq 5.xx} holds true.
Then, by Lemma \ref{lem fatou} and a density argument, we further know that, for
any $f\in \dot{W}^{1,p}(\rn)$, \eqref{eq 5.xx} holds true.
For any $f\in\mathcal{S}(\rn)$,
by the Calder\'{o}n-Zygmund decomposition of weighted Sobolev
spaces (see, for example, \cite[Proposition 1.1]{AC05} or \cite[Lemma 6.6]{AM06iii}),
we conclude that
there exist positive constants $C$ and $N$ such that, for any $\az\in(0,\,\fz)$,
there exist a collection $\{B_i\}_{i=1}^\fz$ of balls of $\rn$,
a family of functions, $\{b_i\}_{i=1}^\fz\st C^1(\rn)$,
and an almost everywhere Lipschitz function $g$ such that the following properties hold true:
\begin{eqnarray}\label{eq 5.y1}
f(x)=g(x)+\sum_{i=1}^\fz b_i(x)\ \ \text{for almost every}\ x\in\rn;
\end{eqnarray}
\begin{eqnarray}\label{eq 5.y2}
\|\nabla g\|_{L^p(w,\,\rn)}\le C\|\nabla f\|_{L^p(w,\,\rn)},\ \
|\nabla g(x)|\le C\az\ \ \text{for almost every}\ x\in\rn;
\end{eqnarray}
\begin{eqnarray}\label{eq 5.y3}
\supp b_i\st B_i\ \ \text{and}\ \int_{B_i}|\nabla b_i(x)|^pw(x)\,dx\le C\az^p w(B_i);
\end{eqnarray}
\begin{eqnarray}\label{eq 5.y4}
\sum_{i=1}^\fz w(B_i)\le C\az^{-p}\int_\rn|\nabla f(x)|^pw(x)\,dx;
\end{eqnarray}
\begin{eqnarray}\label{eq 5.y5}
\sum_{i=1}^\fz \chi_{B_i}(x)\le N,
\end{eqnarray}
here and hereafter, for any $k\in\nn$, $C^k(\rn)$ denotes the space of all functions
possessing continuous derivatives up to order $k$ on $\rn$.
Moreover, by the proof of \cite[Proposition 1.1]{AC05},
we further see that, for any $i\in\nn$,
\begin{eqnarray}\label{eq a5}
b_i=(f-f_{B_i})\zeta_i,
\end{eqnarray}
where $0\le\zeta_i\le1$, $\zeta_i\in C^1(\rn)$ with $\supp\zeta_i\st B_i$, and
$f_{B_i}$ is as in \eqref{eq mean} with $u$ and $B$ replaced by $f$ and $B_i$, respectively.
By this, \eqref{eq 5.y5} and the fact that $f\in\mathcal{S}(\rn)\st L^2(w,\,\rn)$,
it is easy to see that $\sum_{i=1}^\fz b_i\in L^2(w,\,\rn)$.
From this and the fact that $L_w e^{-t^2L_w}$ is bounded on $L^2(w,\,\rn)$ for any $t\in(0,\,\fz)$,
it follows that
\begin{eqnarray}\label{eq a0}
\lim_{N\to\fz}\sum_{i=1}^N tL_w e^{-t^2L_w}(b_i)
=tL_w e^{-t^2L_w}\lf(\sum_{i=1}^\fz b_i\r)\ \ \text{in}\ L^2(w,\,\rn).
\end{eqnarray}
For any $N\in\nn$, let $S_N:=\sum_{i=1}^N tL_w e^{-t^2L_w}(b_i)$.
By \eqref{eq a0}, we further know that there exists a subsequence of $\{S_N\}_{N\in\nn}$
(without loss of generality, we may use the same notation as the original sequence)
such that, for almost every $x\in\rn$,
\begin{eqnarray}\label{eq a4}
tL_we^{-t^2L_w}\lf(\sum_{i=1}^\fz b_i\r)(x)=
\lim_{N\to\fz}S_N(x)=\sum_{i=1}^\fz tL_we^{-t^2L_w}(b_i)(x).
\end{eqnarray}
Moreover, from \eqref{eq a5} and Lemma \ref{lem poincare}, we deduce that, for any $i\in\nn$,
\begin{eqnarray*}
\int_{B_i}\frac{|b_i(x)|^2}{r^2_{B_i}}w(x)\,dx
\ls\int_{B_i}\frac{|f(x)-f_{B_i}|^2}{r^2_{B_i}}w(x)\,dx
\ls \int_{B_i}|\nabla f(x)|^2w(x)\,dx,
\end{eqnarray*}
which, together with \eqref{eq 5.y5}, implies that
$\sum_{i=1}^\fz \frac{b_i}{r_{B_i}}\in L^2(w,\,\rn)$.
By an argument similar to that used in the proof of \eqref{eq a4},
we find that, for any $t\in(0,\,\fz)$ and almost every $x\in\rn$,
\begin{eqnarray*}
tL_we^{-t^2L_w}\lf(\sum_{i=1}^\fz \frac{b_i}{r_{B_i}}\r)(x)=
\sum_{i=1}^\fz tL_we^{-t^2L_w}\lf(\frac{b_i}{r_{B_i}}\r)(x).
\end{eqnarray*}
By this, \eqref{eq a4} and the Minkowski inequality, for any $x\in\rn$, we find that
\begin{eqnarray*}
&&S_1\lf(\sqrt{L_w}(f)\r)(x)\\
&&\hs\le S_1\lf(\sqrt{L_w}(g)\r)(x)
+\lf[\iint_{\bgz(x)}\lf|
\sum_{i=1}^\fz tL_we^{-t^2L_w}(b_i)(y)\chi_{(0,\,r_{B_i})}(t)\r|^2
\frac{w(y)\,dy}{w(B(x,t))}\,\frac{dt}{t}\r]^{\frac12}\\
&&\hs\hs+\lf[\iint_{\bgz(x)}\lf|
\sum_{i=1}^\fz tL_we^{-t^2L_w}(b_i)(y)\chi_{[r_{B_i},\,\fz)}(t)\r|^2
\frac{w(y)\,dy}{w(B(x,t))}\,\frac{dt}{t}\r]^{\frac12}\\
&&\hs\le S_1\lf(\sqrt{L_w}(g)\r)(x)
+\sum_{i=1}^\fz\lf[\int_0^{r_{B_i}}\int_{B(x,t)}\lf|tL_we^{-t^2L_w}(b_i)(y)\r|^2
\frac{w(y)\,dy}{w(B(x,t))}\,\frac{dt}{t}\r]^{\frac12}\\
&&\hs\hs+\lf[\int_0^\fz\int_{B(x,t)}\lf|t^2L_we^{-t^2L_w}
\lf(\sum_{i=1}^\fz\frac{b_i}{r_{B_i}}\r)(y)\r|^2
\frac{w(y)\,dy}{w(B(x,t))}\,\frac{dt}{t}\r]^{\frac12}\\
&&\hs=:{\rm I}_1(x)+\sum_{i=1}^\fz {\rm I}_{2,i}(x)+{\rm I}_3(x).
\end{eqnarray*}
By this, we know that
\begin{eqnarray}\label{eq aa}
&&w\lf(\lf\{x\in\rn:\ \lf|S_1\lf(\sqrt{L_w}(f)\r)(x)\r|>\az\r\}\r)\\
&&\hs\le w\lf(\lf\{x\in\rn:\ {\rm I}_1(x)>\frac{\az}{3}\r\}\r)
+w\lf(\lf\{x\in\rn:\ \sum_{i=1}^\fz {\rm I}_{2,i}(x)>\frac{\az}{3}\r\}\r)\noz\\
&&\hs\hs+w\lf(\lf\{x\in\rn:\ {\rm I}_3(x)>\frac{\az}{3}\r\}\r)\noz\\
&&\hs=:{\rm A}_1+{\rm A}_2+{\rm A}_3.\noz
\end{eqnarray}

We first estimate ${\rm A}_1$. Using the Chebyshev inequality, \eqref{eq 5.x2}
and \eqref{eq 5.y2}, we have
\begin{eqnarray}\label{eq a1}
{\rm A}_1&&\ls \frac{1}{\az^2}\int_\rn \lf|S_1\lf(\sqrt{L_w}(g)\r)(x)\r|^2w(x)\,dx
\ls\frac{1}{\az^2}\int_\rn |\nabla g(x)|^2w(x)\,dx\\
&&\ls\frac{1}{\az^2}\int_\rn \az^{2-p}|\nabla g(x)|^pw(x)\,dx
\ls\frac{1}{\az^p}\int_\rn|\nabla f(x)|^pw(x)\,dx.\noz
\end{eqnarray}

Next, we estimate ${\rm A_3}$. By the Chebyshev inequality,
\eqref{eq Spb} and \eqref{eq 5.y5}, we conclude that
\begin{eqnarray}\label{eq 5.x4}
{\rm A}_3&&\ls\frac{1}{\az^p}
\int_\rn\lf|S_{L_w}\lf(\sum_{i=1}^\fz \frac{b_i}{r_{B_i}}\r)(x)\r|^pw(x)\,dx
\ls\frac{1}{\az^p}\int_\rn\lf|\sum_{i=1}^\fz\frac{b_i(x)}{r_{B_i}}\r|^pw(x)\,dx\\
&&\ls\frac{1}{\az^p}\sum_{i=1}^\fz\int_{B_i}\frac{|b_i(x)|^p}{r^p_{B_i}}w(x)\,dx\noz
\end{eqnarray}
Observing that $b_i\in C^1(\rn)$ with $\supp b_i\st B_i$,
by Lemma \ref{lem imbedding}, \eqref{eq 5.y3} and \eqref{eq 5.y4}, we see that
\begin{eqnarray}\label{eq 5.x6}
\lf[\int_{B_i}|b_i(x)|^pw(x)\,dx\r]^{\frac{1}{p}}\ls r_{B_i}
\lf[\int_{B_i}|\nabla b_i(x)|^pw(x)\,dx\r]^{\frac1p}
\ls r_{B_i}\az\lf[w(B_i)\r]^{\frac1p}.
\end{eqnarray}
This, together with \eqref{eq 5.x4}, implies that
\begin{eqnarray}\label{eq a3}
{\rm A}_3\ls \sum_{i=1}^\fz w(B_i)\ls\frac{1}{\az^p}\int_\rn|\nabla f(x)|^pw(x)\,dx.
\end{eqnarray}

Finally, we estimate ${\rm A_2}$. From \eqref{eq aa}, Lemma \ref{lem Ap-2},
the Chebyshev inequality and \eqref{eq 5.y4},
it follows that
\begin{eqnarray}\label{eq 5.x8}
{\rm A}_2
&&=w\lf(\lf\{x\in\rn:\ \sum_{i=1}^\fz {\rm I}_{2,i}(x)>\frac{\az}{3}\r\}\r)\\
&&\le\sum_{i=1}^\fz w(4B_i)+w\lf(\lf\{x\in\lf(\bigcup_{i=1}^\fz 4B_i\r)^\com:\
\sum_{i=1}^\fz {\rm I}_{2,i}(x)>\frac{\az}{3}\r\}\r)\noz\\
&&\ls\frac{1}{\az^p}\int_\rn|\nabla f(x)|^pw(x)\,dx
+\frac{1}{\az^2}\int_\rn\lf|\sum_{i=1}^\fz {\rm I}_{2,i}(x)
\chi_{(4B_i)^\com}(x)\r|^2w(x)\,dx\noz\\
&&\sim\frac{1}{\az^p}\int_\rn|\nabla f(x)|^pw(x)\,dx\noz\\
&&\hs+\frac{1}{\az^2}\lf\{\sup_{\|u\|_{L^2(w,\,\rn)}=1}
\lf|\int_\rn\lf[\sum_{i=1}^\fz{\rm I}_{2,i}(x)\chi_{(4B_i)^\com}(x)\r]
u(x)w(x)\,dx\r|\r\}^2. \noz
\end{eqnarray}
For any $u\in L^2(w,\,\rn)$ with $\|u\|_{L^2(w,\,\rn)}=1$,
by the H\"{o}lder inequality and the Fubini theorem,
we find that
\begin{eqnarray}\label{eq 5.x5}
\qquad&&\lf|\int_\rn\lf[\sum_{i=1}^\fz {\rm I}_{2,i}(x)\chi_{(4B_i)^\com}(x)\r]u(x)w(x)\,dx\r|\\
&&\hs\le\sum_{i=1}^\fz\int_{(4B_i)^\com}|{\rm I}_{2,i}(x)||u(x)|w(x)\,dx\noz\\
&&\hs=\sum_{i=1}^\fz\sum_{j=3}^\fz\int_{U_j(B_i)}|{\rm I}_{2,i}(x)||u(x)|w(x)\,dx\noz\\
&&\hs\le\sum_{i=1}^\fz\sum_{j=3}^\fz
\lf[\int_{U_j(B_i)}\int_0^{r_{B_i}}\int_{B(x,t)}\lf|t^2L_we^{-t^2L_w}(b_i)(y)\r|^2
\frac{w(y)\,dy}{w(B(x,t))}\,\frac{dt}{t^3}w(x)\,dx\r]^{\frac12}\noz\\
&&\hs\hs\times\|u\|_{L^2(w,\,U_j(B_i))}\noz\\
&&\hs\ls\sum_{i=1}^\fz\sum_{j=3}^\fz\lf[\iint_{(y,t)\in R(U_j(B_i)),\,t\in(0,r_{B_i})}
\lf|t^2L_we^{-t^2L_w}(b_i)(y)\r|^2w(y)\,dy\,\frac{dt}{t^3}\r]^{\frac12}\noz\\
&&\hs\hs\times\|u\|_{L^2(w,\,U_j(B_i))}\noz\\
&&\hs\ls\sum_{i=1}^\fz\sum_{j=3}^\fz\lf[\int_0^{r_{B_i}}\int_{2^{j+1}B_i\setminus 2^{j-2}B_i}
\lf|t^2L_we^{-t^2L_w}(b_i)(y)\r|^2w(y)\,dy\,\frac{dt}{t^3}\r]^{\frac12}
\|u\|_{L^2(w,\,U_j(B_i))},\noz
\end{eqnarray}
where $U_j(B_i)$ is as in \eqref{eq-def of ujb} with $B$ replaced by $B_i$ and
$R(U_j(B_i))$ is as in \eqref{eq tent} with $F$ replaced by $U_j(B_i)$.
From Proposition \ref{pro ODEB}, Lemma \ref{lem Ap-2} and \eqref{eq 5.x6},
it follows that there exist positive constants $c$, $\wz c$, $\tz_1$, $\tz_2$ and $\tz$ such that,
for all $t\in(0,\,r_{B_i})$,
\begin{eqnarray*}
&&\lf[\int_{2^{j+1}B_i\setminus 2^{j-2}B_i}|t^2L_we^{-t^2L_w}(b_i)(y)|^2w(y)\,dy\r]^{\frac12}\\
&&\hs\ls 2^{j\tz_1}\lf[\Upsilon\lf(\frac{2^jr_{B_i}}{t}\r)\r]^{\tz_2}
e^{-c\lf(\frac{2^jr_{B_i}}{t}\r)^2}\lf[w(2^jB_i)\r]^{\frac12}
\lf[\frac{1}{w(B_i)}\int_{B_i}|b_i(y)|^pw(y)\,dy\r]^{\frac1p}\\
&&\hs\ls2^{j\tz}e^{-\wz c\lf(\frac{2^jr_{B_i}}{t}\r)^2}[w(B_i)]^{\frac12-\frac1p}r_{B_i}
\az[w(B_i)]^{\frac1p}
\ls2^{j\tz}r_{B_i}\az e^{-\wz c\frac{(2^jr_{B_i})^2}{t^2}}[w(B_i)]^{\frac12}.
\end{eqnarray*}
From this and \eqref{eq 5.x5}, via choosing a positive constant $N\in(\max\{2\theta,\,3\},\,\fz)$,
we deduce that, for any $u\in L^2(w,\,\rn)$ with $\|u\|_{L^2(w,\,\rn)}=1$,
\begin{eqnarray}\label{eq 5.x7}
&&\lf|\int_\rn\lf[\sum_{i=1}^\fz {\rm I}_{2,i}(x)\chi_{(4B_i)^\com}(x)\r]u(x)w(x)\,dx\r|\\
&&\hs\ls\sum_{i=1}^\fz\sum_{j=3}^\fz\az[w(B_i)]^{\frac12}
\lf[\int_0^{r_{B_i}}2^{j(2\tz-N)}e^{-2\wz c\lf(\frac{2^jr_{B_i}}{t}\r)^2}
\lf(\frac{2^jr_{B_i}}{t}\r)^N r^{2-N}_{B_i}t^{N-3}\,dt\r]^{\frac12}\noz\\
&&\hs\hs\times\|u\|_{L^2(w,\,U_j(B_i))}\noz\\
&&\hs\ls\sum_{i=1}^\fz\sum_{j=3}^\fz2^{-j(\frac{N}{2}-\tz)}\az[w(B_i)]^{\frac12}
\|u\|_{L^2(w,\,U_j(B_i))}\noz\\
&&\hs\ls\sum_{i=1}^\fz\sum_{j=3}^\fz2^{-j(\frac{N}{2}-\theta)}\az w(B_i)
\lf[\frac{1}{w(2^jB_i)}\int_{2^jB_i}|u(y)|^2w(y)\,dy\r]^{\frac12}\noz\\
&&\hs\ls\sum_{i=1}^\fz\sum_{j=3}^\fz2^{-j(\frac{N}{2}-\theta)}\az w(B_i)
\inf_{z\in B_i}\lf[M_w(|u|^2)(z)\r]^{\frac12}\noz\\
&&\hs\ls\sum_{i=1}^\fz\az \int_{B_i}\lf[M_w(|u|^2)(z)\r]^{\frac12}w(z)\,dz
\ls\az\int_{\cup_{i=1}^\fz B_i}\lf[M_w(|u|^2)(z)\r]^{\frac12}w(z)\,dz,\noz
\end{eqnarray}
where the Hardy-Littlewood maximal function $M_w$ is as in \eqref{eq hl}.
Using the Kolmogrov lemma (see, for example, \cite[Lemma 5.16]{Du01}), we obtain
\begin{eqnarray*}
\int_{\cup_{i=1}^\fz B_i}\lf[M_w(|u|^2)(z)\r]^{\frac12}w(z)\,dz
\ls \lf[w\lf(\bigcup_{i=1}^\fz B_i\r)\r]^{1-\frac12}\lf\||u|^2\r\|^{\frac12}_{L^1(w,\,\rn)}.
\end{eqnarray*}
This, together with \eqref{eq 5.x7}, \eqref{eq 5.x8}, \eqref{eq 5.y4}, \eqref{eq 5.y5}
and the fact that $\|u\|_{L^2(w,\,\rn)}=1$, implies that
\begin{eqnarray}\label{eq a2}
{\rm A}_2\ls \frac{1}{\az^p}\int_\rn|\nabla f(x)|^pw(x)\,dx
+w\lf(\bigcup_{i=1}^\fz B_i\r)
\ls\frac{1}{\az^p}\int_\rn|\nabla f(x)|^pw(x)\,dx.
\end{eqnarray}
Combining \eqref{eq aa}, \eqref{eq a1}, \eqref{eq a3}
and \eqref{eq a2}, we see that, for any $f\in\mathcal{S}(\rn)$,
\begin{eqnarray*}
w\lf(\lf\{x\in\rn:\ \lf|S_1\lf(\sqrt{L_w}(f)\r)(x)\r|>\az\r\}\r)
\ls\frac{1}{\az^p}\int_\rn|\nabla f(x)|^pw(x)\,dx,
\end{eqnarray*}
which further implies \eqref{eq 5.xx}.
This finishes the proof of Proposition \ref{thm main}.
\end{proof}

\section{Proof of Theorem \ref{cor main}}\label{s6}
\hskip\parindent
In this section, we show that Propositions \ref{thm equi}, \ref{thm main1} and \ref{thm main}
imply Theorem \ref{cor main}.
\begin{proof}[Proof of Theorem \ref{cor main}]
Let $p\in(\frac{2n}{n+1},\,2]$ and $w\in A_p(\rn)$.
For any $f\in C_c^\fz(\rn)$, by \cite[Theorem 1.1]{CR13}, we know
that $L_w^{1/2}(f)\in L^2(w,\,\rn)$. Moreover, it is easy to see that,
for any $p\in (\frac{2n}{n+1},\,2]$ and any $f\in C_c^\fz(\rn)$,
$$\lf\|\nabla L_w^{-1/2}\lf(L_w^{1/2}(f)\r)\r\|_{L^p(w,\,\rn)}
=\|\nabla f\|_{L^p(w,\,\rn)}<\fz,$$
which implies that $L_w^{1/2}(f)\in H^p_{L_w,\,{\rm Riesz}}(\rn)$.
Thus, by Theorems \ref{thm main} and \ref{thm equi},
we conclude that, for $p\in (\frac{2n}{n+1},\,2]$ and any $f\in C^\fz_c(\rn)$,
\begin{eqnarray}\label{eq 6.0}
\|L_w^{1/2}(f)\|_{L^p(w,\,\rn)}\ls\lf\|\nabla L_w^{-1/2}\lf(L_w^{1/2}(f)\r)\r\|_{L^p(w,\,\rn)}
\sim\|\nabla f\|_{L^p(w,\,\rn)},
\end{eqnarray}
which further implies that $L_w^{1/2}(f)\in L^p(w,\,\rn)$.
From this, Theorems \ref{thm equi} and \ref{thm main1}, we deduce that,
for $p\in (\frac{2n}{n+1},\,2]$ and any $f\in C^\fz_c(\rn)$,
\begin{eqnarray*}
\|\nabla f\|_{L^p(w,\,\rn)}=\lf\|\nabla L_w^{-1/2}\lf(L_w^{1/2}(f)\r)\r\|_{L^p(w,\,\rn)}
\ls\lf\|L_w^{1/2}(f)\r\|_{L^p(w,\,\rn)}.
\end{eqnarray*}
This, together with \eqref{eq 6.0}, finishes the proof of Theorem \ref{cor main}.
\end{proof}

\smallskip

\noindent\textbf{Acknowledgements}. The authors would like to thank Professors Jun Cao,
Sibei Yang, Wen Yuan and Renjin Jiang for some helpful conversations on this topic.


\bigskip

\noindent{Dachun  Yang  and Junqiang Zhang (Corresponding author)}

\medskip

\noindent{\small School of Mathematical Sciences, Beijing Normal University,
Laboratory of Mathematics and Complex Systems, Ministry of Education,
Beijing 100875, People's Republic of China}

\smallskip

\noindent{\it E-mails}: \texttt{dcyang@bnu.edu.cn} (D. Yang)

\hspace{1.12cm}\texttt{zhangjunqiang@mail.bnu.edu.cn} (J. Zhang)

\end{document}